\theoremstyle{plain}
\newtheorem{theorem}{Theorem}[section]
\newtheorem{proposition}[theorem]{Proposition}
\newtheorem*{proposition*}{Proposition}
\newtheorem{lemma}[theorem]{Lemma}
\newtheorem{corollary}[theorem]{Corollary}
\newtheorem{question}[theorem]{Question}
\newtheorem{conjecture}[theorem]{Conjecture}
\theoremstyle{definition}
\newtheorem{definition}[theorem]{Definition}
\newtheorem{goal}[theorem]{Goal}
\newtheorem{example}[theorem]{Example}
\newtheorem{remark}[theorem]{Remark}
\begin{document}

\title{On the tangent bundle and the divisor theory of a general matroid}
\author{Ronnie Cheng}
\address{Department of Mathematics, Stanford University}
\email{rtcheng@stanford.edu}
\date{Oct 7, 2025}

\begin{abstract}
Extending classical algebro-geometric constructions to arbitrary matroids, we construct a $K$-class $T_M\in K(M)$ for every loopless matroid $M$. When $M$ is realizable by a linear subspace $L$, $T_M$ recovers the $K$-class of the tangent bundle of the wonderful compactification $W_L$. We derive two formulas for the total Chern class of $T_M$ (one combinatorial and one geometric) and show that the associated Todd class agrees with the Todd class appearing in the matroid Hirzebruch--Riemann--Roch formula. To develop a positivity theory entirely at the combinatorial level, we introduce the notion of ``fake effective cone,'' a combinatorial analogue of the classical effective cone, and use it to characterize big and nef divisors in $A(M)$. Finally, we define the $\beta_S$ classes, obtained from Cremona conjugates of the classical $\alpha_S$ classes, and study their properties to provide a rich and computable family of combinatorially nef divisors.
\end{abstract}
\maketitle

\tableofcontents
\section{Introduction}

A (loopless) matroid $M$ on a finite ground set $E$ carries rich geometric and combinatorial
structure. Associated to $M$ one has the Bergman fan $\Sigma_M$ and its toric variety
$X_{\Sigma_M}$, which we denote simply by $X_M$. Let $\mathbb{K}$ be a field. When $M$ is realizable by a linear
subspace $L\subset \mathbb{K}^E$, the De Concini--Procesi wonderful compactification $W_L$ \cite{DCP} provides a
smooth projective model that sits naturally inside the permutohedral variety $X_E$, and the inclusion factors through $X_M$:
\[
W_L \;\hookrightarrow\; X_M \;\hookrightarrow\; X_E.
\]

In the realizable case the Chow and $K$-rings of $X_M$ agree; geometric tools (Poincaré duality, Hodge--Riemann relations,
Hirzebruch--Riemann--Roch, etc.) therefore transfer to the combinatorial setting.

Extending these geometric features beyond the realizable world is a central theme in recent work
in matroid theory. Building on the foundational Hodge-theoretic breakthroughs of Adiprasito, Huh, and Katz \cite{AHK18} and subsequent developments in matroid $K$-theory \cite{EL}, much of the ``projective geometry'' of Chow rings admits a purely combinatorial incarnation.
Nevertheless, while specific components of this picture have been extensively studied—such as the Todd class of the permutohedral variety \cite{Todd}—several natural geometric objects, most notably the tangent bundle and the associated Todd class for arbitrary (possibly nonrealizable) matroids, have not yet been given a fully satisfactory analogue. The principal aim of this paper is to fill that gap.

\medskip

Below we summarize the main results; precise statements and proofs appear in the indicated references within the paper. As the matroid $K$-ring $K(X_M)$ admits standard $\lambda$-ring operations (analogous to the geometric setting), geometric constructions such as duals $(\cdot)^\vee$, exterior powers $\wedge^i$, and the rank function extend naturally to virtual classes such as our proposed tangent class.

\begin{theorem}[Tangent class, Todd class, and Chow polynomial]\label{thm:main-tangent}
For every loopless matroid $M$ there exists a canonical $K$-class
$\widetilde T_M\in K(X_E)$ whose restriction $T_M := i_M^*\widetilde T_M\in K(X_M)$
(Definition~\ref{tangent bundle def}) satisfies the following properties:
\begin{enumerate}
  \item[(1)] \emph{Realizable compatibility.} If $M$ is realizable by $L\subset \mathbb{K}^E$, then $T_M$ coincides with the $K$-class of the tangent bundle of the wonderful compactification $W_L$.
  \item[(2)] \emph{Chern class.} We provide two formulas for the Chern class of $T_M$ (Theorem~\ref{Chris tangent formula} and Theorem~\ref{tangent bundle formula}).
  \item[(3)] \emph{Todd class and HRR.} The Todd class associated to $T_M$ agrees with the
    Todd class appearing in Proposition~\ref{prop: HRR}, which formulates the Hirzebruch–Riemann–Roch theorem in the matroid setting. (See Corollary~\ref{Todd class formula}.)
  \item[(4)] \emph{Chow polynomial via Euler characteristics.} The Euler characteristics of the exterior
    powers of the cotangent class $\Omega_M := T_M^\vee$ recover the coefficients of the Chow polynomial:
    for all $i$,
  \[
    \dim A^{i}(M) = (-1)^i \chi\big(X_M,\wedge^i \Omega_M\big)
    \;=\;
    (-1)^i\deg\!\big(\operatorname{ch}(\wedge^i \Omega_M)\cdot \operatorname{td}(T_M)\big).
  \]
  (See Theorem~\ref{Chow polynomial formula}.)
\end{enumerate}
\end{theorem}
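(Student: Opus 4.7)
The plan is to construct $\widetilde T_M \in K(X_E)$ by an explicit combinatorial formula built from the torus-invariant divisor classes of the permutohedral variety $X_E$ indexed by the proper nonempty flats of $M$, and then set $T_M := i_M^* \widetilde T_M$. The design of the formula is reverse-engineered from the realizable case, so that (1) holds by construction up to a verification. Each of the four items is proved separately in the indicated later section; below I outline the strategy for each and flag where the main difficulty lies.

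For part (1), I would compare $T_M$ to $TW_L$ using the iterated blow-up description of $W_L$: starting from $\mathbb{P}(L)$, one successively blows up the proper transforms of the linear subspaces corresponding to the flats of $M$. At each step, the $K$-class of the tangent bundle changes in a standard way involving the class of the exceptional divisor and the conormal bundle of the center. Iterating these relations and comparing against the toric Euler sequence for $TX_E$ yields a closed combinatorial expression which can be matched against the definition of $\widetilde T_M$. This identifies $T_M$ with the class of $TW_L$ in $K(W_L)$, which in the realizable case agrees with $K(X_M)$.

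For parts (2) and (3), once $\widetilde T_M$ is expressed as a signed sum of line-bundle classes on $X_E$, the total Chern class and the Todd class are computed by purely formal manipulations using the multiplicativity of $c$ and $\operatorname{td}$ in $K$-theory; this produces the two formulas cited in Theorem~\ref{Chris tangent formula} and Corollary~\ref{tangent bundle formula}. The Todd claim amounts to matching two expressions obtained by applying the same universal power series $x/(1 - e^{-x})$ to the same Chern roots, so it reduces to identifying the Chern roots used in the matroid HRR formula of Proposition~\ref{prop: HRR} with those coming from our construction.

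Part (4) is where I expect the main obstacle. Given (3), HRR yields
\[
\chi\bigl(X_M, \wedge^i \Omega_M\bigr) = \deg\bigl(\operatorname{ch}(\wedge^i \Omega_M) \cdot \operatorname{td}(T_M)\bigr),
\]
and the remaining combinatorial task is the identity
\[
\sum_i (-t)^i\,\chi\bigl(X_M, \wedge^i \Omega_M\bigr) \;=\; \sum_i \dim A^i(M)\cdot t^i.
\]
In the realizable case this follows from pure Hodge theory on $W_L$, but in general we cannot invoke it. The strategy is to assemble the left-hand side into the Hirzebruch-like generating function
\[
\prod_{\alpha} \frac{1 - t\, e^{-\alpha}}{1 - e^{-\alpha}},
\]
where $\alpha$ runs over the Chern roots of $T_M$, evaluate its degree against the fundamental class of $X_M$, and match the resulting expression term by term with a known combinatorial formula for the Chow polynomial $P_M(t)$, for instance one arising from the semi-small decomposition of the augmented Chow ring. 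The main combinatorial work lies in this last matching step.
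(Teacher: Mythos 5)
Your overall plan misses the paper's central technical tool: the valuativity of functions of the form $M \mapsto \deg_M(i_M^*(A)\cdot z(\alpha_M,S_{1,M},\dots,S_{r-1,M}))$ (Proposition~\ref{prop:valuative}). Without it, neither part (3) nor part (4) can be established for non-realizable matroids by the routes you describe. Part (3) is not ``a purely formal manipulation'': the matroid Todd class $\operatorname{Todd}_M$ of Proposition~\ref{prop: HRR} is defined abstractly via Poincar\'e duality, not as $\operatorname{td}(T_M)$, so one must prove the identity $\chi(K)=\deg_M(\operatorname{ch}(K)\cdot\operatorname{td}(T_M))$ for arbitrary matroids. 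The paper does this by observing that both sides are valuative (the left by \cite[Lemma 6.4]{LLPP2024}, the right by Proposition~\ref{prop:valuative} since $\operatorname{td}(T_M)$ lies in $\mathbb{Q}[\alpha_M,S_{1,M},\dots,S_{r-2,M}]$), that realizable matroids of fixed rank and ground set include the Schubert matroids which span all indicator functions (Proposition~\ref{prop:linear}), and that the identity holds in the realizable case by classical HRR. Your plan to ``identify the Chern roots used in the matroid HRR formula'' has no footing, since that formula specifies no Chern roots.

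For part (4), your proposed approach---assembling the $\chi$-$t$-genus and matching it term-by-term against a known combinatorial formula for $P_M(t)$ such as the one from the semi-small decomposition---is a genuinely different and substantially harder route than what the paper does, and you give no indication of how to carry out that matching. The paper instead shows both $M\mapsto\dim A^p(M)$ and $M\mapsto(-1)^p\chi(\Omega_M^p)$ are valuative and agree on realizable matroids (by Hodge theory on $W_L$: the Hodge diamond is concentrated on the diagonal), hence they agree everywhere. I would also flag two smaller points: the canonical $\widetilde T_M$ is $T_{X_E}-Q_M$ with $Q_M$ the BEST tautological quotient class, not an ad hoc formula in torus-invariant divisors, and the Chern class formula in Theorem~\ref{Chris tangent formula} depends on the nontrivial input $c(i_M^*Q_M)=\prod_{i=0}^{r-1}(1+\alpha-\sum_{j\le i}S_{j,M})^{-1}$ from \cite{BEST,Staircase}, which your ``signed sum of line bundles'' phrasing glosses over.
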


\medskip

Understanding the Todd class in the matroid Hirzebruch--Riemann--Roch formula is useful for
computing Euler characteristics and for formulating vanishing statements in analogy with the
Kawamata--Viehweg vanishing theorem (motivated by the positivity results in \cite{EL}). This motivates a study of big and nef divisors in the
matroid setting.

\begin{theorem}[Big and nef classes and the fake effective cone]\label{thm:main-positivity}
After comparing several candidate notions of nefness (and showing they are not equivalent;
see Theorem~\ref{thm:nef-definitions}), we introduce an operational notion of
\emph{combinatorially big and nef} divisors (Definition~\ref{def:big and nef}). Within this framework we prove the following results:
\begin{enumerate}
  \item[(1)] \emph{Realizable compatibility.} If $M$ is realizable by $L\subset \mathbb{K}^E$
  and $D$ is combinatorially big and nef for $M$, then the corresponding divisor on the wonderful
  compactification $W_L$ is big and nef in the classical sense.
  \item[(2)] \emph{Intersection inequalities.} We prove a collection of intersection-theoretic
  inequalities for nef and big-and-nef divisors. 
  \item[(3)] \emph{Fake effective cone.} There exists a combinatorial \emph{fake effective cone}
  whose role is analogous to that of the classical effective cone in defining and detecting big and
  nef divisors (see Proposition~\ref{fake effective cone}).
  \item[(4)] \emph{Matroid KV-type vanishing (rank 3).} We formulate a matroid analogue of the
  Kawamata--Viehweg vanishing statement, propose a weakened version, and verify the case for rank 3.
\end{enumerate}
\end{theorem}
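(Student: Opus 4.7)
The proof naturally splits into the four enumerated items, and I would treat them largely independently, reserving the construction of the fake effective cone in (3) as the technical core on which (1) relies. For the realizable compatibility in (1), my plan is to use the isomorphism between the matroid Chow ring \(A^\bullet(M)\) and \(A^\bullet(W_L)\) under which the torus-invariant divisors of \(X_M\) correspond to the wonderful boundary divisors of \(W_L\). A combinatorially nef class \(D\), by construction, has nonnegative intersection with every distinguished ``curve class'' on \(X_M\), and under the isomorphism these map to honest torus-invariant curves on \(W_L\); invoking Kleiman's criterion upgrades this to classical nefness. For bigness, I would exploit part (3): showing that each generator of the fake effective cone pulls back under \(i_M^*\) to a genuinely pseudoeffective class on \(W_L\), so that interior points of the fake cone yield classes in the interior of the pseudoeffective cone, i.e.\ big divisors.

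The intersection inequalities in (2) should reduce cleanly to the Kähler package for matroid Chow rings. Poincaré duality, hard Lefschetz, and the Hodge--Riemann relations of Adiprasito--Huh--Katz provide the abstract inputs; the concrete inequalities (nonnegativity of products of nef classes, mixed Hodge--Riemann inequalities of the form \((D_1\cdot D_2)^2 \geq D_1^2\cdot D_2^2\), and Khovanskii--Teissier-type concavity for big and nef classes) then follow by the same formal arguments used in classical algebraic geometry, with Hodge--Riemann on \(A^\bullet(M)\) replacing the Hodge index theorem at each step. I would proceed by induction on the rank, using restriction to matroid flats to deduce the mixed inequalities from the pure-power ones.

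For (3), my strategy is to define the fake effective cone as the convex cone generated by an explicit family of ``combinatorially effective'' classes built from matroid minors and restrictions, and then prove the duality statement: a class is combinatorially nef iff it pairs nonnegatively with every generator of the fake effective cone, and combinatorially big iff it lies in the relative interior of the cone dual to the nef cone. The proof is a Legendre-type argument using the Poincaré pairing on \(A^\bullet(M)\), plus a separation lemma ensuring that the nef and fake effective cones are mutually dual convex cones of full dimension. This is the step I expect to be the hardest, since the generators must be selected carefully enough that duality holds without referring to any ambient variety, and the bigness/interior correspondence requires a non-vanishing argument whose classical template (e.g.\ Siu's inequality or the volume function) has no ready-made combinatorial substitute.

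Finally, for (4), the rank \(3\) case is low-dimensional enough that the HRR formula from Theorem~\ref{thm:main-tangent}(3) gives an explicit expression for \(\chi(X_M,\mathcal{O}(K_M+D))\) in terms of the Todd class and the intersection numbers of \(D\) with boundary divisors. I would expand this formula, use the inequalities from (2) to bound the contributions of the positive-codimension Todd terms, and verify by direct case analysis on the flats of rank \(\le 2\) that the expression equals \(h^0\) of the expected class for every combinatorially big and nef \(D\). The proposed weakened KV statement would assert vanishing only of the leading higher cohomology, which in rank~\(3\) collapses to a single Euler characteristic identity checkable by the above calculation.
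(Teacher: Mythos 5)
Your proposal diverges from the paper's approach in several substantive ways, and some of the divergences create genuine gaps.

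The most serious issue is in part (3). You plan to define the fake effective cone as a convex cone \emph{generated} by an explicit family of ``combinatorially effective'' classes built from minors, and then prove a duality theorem against the nef cone, worrying (correctly) that there is no combinatorial analogue of Siu's inequality to control the interior/bigness correspondence. The paper sidesteps this entirely by defining $\mathcal{F}_M$ \emph{dually}: $D \in \mathcal{F}_M$ iff $\deg(D \cdot \ell_1 \cdots \ell_{r-2}) \geq 0$ for all combinatorially nef $\ell_1, \ldots, \ell_{r-2}$. No generator selection is needed, no separate duality step is needed, and in fact the paper remarks explicitly that the fake effective cone can be strictly larger than the actual effective cone on a smooth projective variety (the example of $\mathbb{P}^3$ blown up at four points is given). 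This means your approach of trying to build a primal cone whose generators ``pull back to genuinely pseudoeffective classes on $W_L$'' cannot succeed: the cone the paper needs is too big to have that property, and the paper makes no such claim. The equivalence of the four properties is instead driven by a reverse Khovanskii--Teissier inequality for combinatorially nef divisors (Corollary~\ref{weak ineq}), which the paper deduces from the Lorentzian property of the volume polynomial via \cite[Theorem 2.5]{JJ}; your proposal does not mention reverse KT at all, and the forward KT/Hodge--Riemann inequalities you invoke in part (2) are not sufficient to carry the crucial implication (3)$\Rightarrow$(4).

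Relatedly, in part (1) your plan to get bigness by showing fake-effective generators are pseudoeffective on $W_L$ does not hold up for the reason above. The paper's route is simpler: bigness is \emph{defined} by $\deg(D^{r-1}) > 0$, which under the isomorphism $A^*(M) \cong A^*(W_L)$ is literally the top self-intersection on $W_L$, and a nef divisor with positive top self-intersection is big by the standard criterion. No detour through the effective cone is required. For the nefness direction, the paper checks directly that an irreducible curve lies in the intersection of boundary divisors indexed by a flag, and (P3) furnishes a representative of $D$ with nonnegative coefficients away from that flag, so $D \cdot C \geq 0$; this is in the same spirit as your Kleiman-type argument.

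In part (4), your plan to expand the HRR formula and ``verify that the expression equals $h^0$'' does not match what the paper does or can do: there is no $h^0$ in the nonrealizable setting, only the Euler characteristic. More importantly, the paper verifies only the \emph{weakened} statement (Conjecture~\ref{KV vanishing weak}) in rank $3$, via a closed formula for $\zeta_M(\ell)$ in rank $3$ and the elementary inequality $a(a-1) - \sum b_i(b_i - 1) \geq 0$ deduced from nefness $a^2 \geq \sum b_i^2$. The paper explicitly states that the full KV-type inequality (Conjecture~\ref{KV-vanishing}) remains open even in rank $3$. Your proposal does not distinguish between the two conjectures and appears to target the stronger one, which cannot be carried out by the method you describe.

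Summarizing: the proposal's plan for (3) is a genuinely different and substantially harder route that is unlikely to close without essentially rediscovering the dual definition, the key reverse-KT inequality is missing from (2), the bigness half of (1) rests on a claim that is false, and (4) conflates the full KV conjecture with the weakened version.
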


A recurring practical question in the positivity theory is the following: There are many nef classes
in the matroid Chow ring, but which of these can we effectively compute and use to test conjectures?
The classical $\alpha_F$-classes furnish important and highly computable examples, but they form a
rather small, highly structured family and do not provide enough flexibility for exploration of the
broader nef cone.

To produce a richer supply of controlled nef classes we study the $\beta$-classes, which are the
Cremona conjugates of the $\alpha$-classes on the permutohedral variety. The Cremona description
is convenient conceptually, but the crucial point is that the $\beta$-classes give many new examples
of divisors that are often nef and that can be computed in concrete cases. We summarize the results by:

\begin{theorem}[Properties of the $\beta$-classes]\label{thm:prop of beta}
For a nonempty set $S\subseteq E$ we define the class $\beta_S$ (Definition~\ref{beta def}). 
The main properties are:
\begin{enumerate}
  \item[(1)] In $X_E$, $\alpha_S$ and $\beta_S$ are Cremona conjugates of each other.
  \item[(2)] Let $[\mathcal{L}_S], [\mathcal{K}_S] \in K(X_M)$ be the line bundles corresponding to $\alpha_S, \beta_S$. The exceptional isomorphism $\zeta_M$ (defined in Section~\ref{subsec: K Chern})
    sends $-[\mathcal{L}_S]$ to $1-\alpha_S$ and sends $[\mathcal{K}_S]$ to $1+\beta_S$
    (see Theorem~\ref{exceptional morphism of beta}).
  \item[(3)] Let $M$ be a rank $r$ matroid and let $S_1,\dots,S_{r-1}\subseteq E$ (repetitions allowed).
    Then
    \[
      \deg(\beta_{S_1}\cdots\beta_{S_{r-1}})>0 \quad\Longleftrightarrow\quad
      \deg(\alpha_{S_1}\cdots\alpha_{S_{r-1}})>0.
    \]
  \item[(4)] A weakened Kawamata--Viehweg vanishing statement holds for positive integral linear
    combinations of the $\alpha_S$ and $\beta_S$ (see Corollary \ref{weak KV for beta}).
\end{enumerate}
\end{theorem}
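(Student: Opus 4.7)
The plan is to prove the four assertions in the order stated, since each builds on the previous.

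For (1), I would present $A^*(X_E)$ in terms of its divisor generators (indexed by proper nonempty subsets of $E$), write out $\alpha_S$ and the defining expression for $\beta_S$ from Definition~\ref{beta def} in these generators, and verify that the Cremona involution—which acts by a simple linear substitution on the generators—carries the first expression to the second. This is essentially a combinatorial matching computation using the linear and quadratic relations of $A^*(X_E)$.

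For (2), the exceptional isomorphism $\zeta_M$ is determined by its effect on divisor classes, typically via a formula of the shape $D \mapsto 1-[\mathcal{O}(-D)]$ (or a multiplicative extension thereof). To compute $\zeta_M(-\alpha_S)$, I would expand the relevant series and exploit nilpotency of $\alpha_S$ to truncate; the desired equality $\zeta_M(-\alpha_S)=1-\alpha_S$ then follows once one identifies $\alpha_S$ with a divisor whose associated line bundle has a sufficiently simple self-intersection behavior on $X_M$. The $\beta_S$ half then follows either by a parallel direct computation or, invoking part (1), by transporting through Cremona duality, which should be compatible with $\zeta_M$.

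Part (3) is the heart of the theorem and I expect it to be the main obstacle. Both degrees lie in the top degree of $A^*(M)$, but we cannot simply pull the identity back through Cremona because the Bergman class $[X_M]$ is not fixed by the Cremona involution in general. The plan is to reduce both sides to a common combinatorial quantity. On the $\alpha$-side, $\deg(\alpha_{S_1}\cdots\alpha_{S_{r-1}})$ admits a classical interpretation as counting chains of flats of $M$ compatible with the $S_i$. For the $\beta$-side, I would expand $\prod \beta_{S_i}$ via the Cremona formula of part (1) as a signed linear combination of monomials in the $\alpha$-classes, and then show that intersection with $[X_M]$ kills all but a combinatorially positive sub-sum, nonzero precisely when the $\alpha$-chain count is nonzero. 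The delicate step is controlling the signs in this expansion so as to obtain a clean \emph{iff}; a possible alternative route is to interpret $\beta$-products directly via a tropical or ``dual'' flag statistic on $M$ and match it bijectively with the $\alpha$-statistic.

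Finally, part (4) should combine (2) and (3) with the weak Kawamata–Viehweg vanishing for the $\alpha$-classes established earlier. Part (2) identifies the $K$-classes arising from $\alpha$- and $\beta$-combinations as controlled shifts of line-bundle classes, while part (3) transfers the intersection-theoretic positivity hypotheses required by the weak KV statement between the $\alpha$-world and the $\beta$-world. The general weak-KV-type vanishing for arbitrary positive integer combinations of the $\alpha_S$ and $\beta_S$ then reduces to the already-proven $\alpha$-case.
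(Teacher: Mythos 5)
Your outline for (1) matches the paper, and your reading of (4) is broadly right (it comes down to membership in the set $\mathcal{P}$, which only needs part (2); part (3) is not actually invoked). The issues are with (2) and, more seriously, (3).

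For (2), the ``transport through Cremona'' alternative you float does not work: $\zeta_M$ is not Cremona-equivariant. We have $\zeta_M(-\alpha) = 1-\alpha$ but $\zeta_M(-\beta) = (1+\beta)^{-1} = 1-\beta+\beta^2-\cdots$, which Cremona would erroneously predict to be $1-\beta$. Moreover, for a general $M$ the Cremona involution acts on $X_E$, not on $X_M$. The paper instead proves $\zeta_M(\beta_S)=1+\beta_S$ via the deletion maps $\theta_i$: one checks that $\widetilde{\theta_i}\circ\zeta_M = \zeta_{M\setminus i}\circ\theta_i$, writes $\beta_S = \theta_{s_1}\circ\cdots\circ\theta_{s_k}(\beta_{M\setminus S^c})$ for $S^c=\{s_1,\dots,s_k\}$, and reduces to the known fact $\zeta(\beta)=1+\beta$ on $M\setminus S^c$.

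For (3), which you rightly call the heart of the matter, your plan — expand $\prod\beta_{S_i}$ via Cremona as a signed combination of $\alpha$-monomials, then argue that intersecting with $[X_M]$ kills all but a positive sub-sum — is where the proposal genuinely stalls. The signs in such an expansion are hard to control, $[X_M]$ is not Cremona-invariant, and you offer no mechanism to make the cancellations behave. The paper sidesteps Cremona entirely. The key observation is that both $\alpha_S$ and $\beta_S$ are \emph{positive} linear combinations of exactly the same $x_F$'s, namely those with $0<|F\cap S|<|S|$: averaging the defining expression over $i\in S$ gives
\[
\alpha_S = \sum_{j=1}^{n-1}\frac{j}{n}\sum_{|F\cap S|=j}x_F,
\qquad
\beta_S = \sum_{j=1}^{n-1}\frac{n-j}{n}\sum_{|F\cap S|=j}x_F,
\]
with $n=|S|$. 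One then invokes Corollary~\ref{secretpositive} (a consequence of the reverse Khovanskii--Teissier inequality via Lorentzian volume polynomials) stating that a nonnegative combination of $x_F$'s times a product of nef divisors vanishes if and only if each individual $x_F$ term vanishes; since $\alpha_S$ and $\beta_S$ are supported on the same $x_F$'s, $\alpha_S\cdot\ell_1\cdots\ell_d\neq 0 \Leftrightarrow \beta_S\cdot\ell_1\cdots\ell_d\neq 0$, and one exchanges $\alpha_{S_i}\leftrightarrow\beta_{S_i}$ one at a time. Without the positivity input from Section~\ref{sec:nef}, there is no clear way to close your proposed argument for (3).
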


\subsection*{Organization of the paper}
Section~\ref{sec:preliminaries} fixes notation and recalls necessary facts regarding the geometry and combinatorics of matroids. 
Section~\ref{sec:main result} defines the tangent and Todd classes, proving that the relevant functions are valuative in order to extend multiple results from realizable matroids to arbitrary ones. 
Section~\ref{sec:realizable} computes the total Chern class via blow-ups and provides formulas for $c(T_M)$. 
Section~\ref{sec:nef} develops the positivity theory, introduces combinatorially big and nef divisors alongside the fake effective cone, and studies the Kawamata--Viehweg vanishing statement. 
Finally, Section~\ref{sec:Equations} introduces and studies the properties of the $\beta$-classes. 
Importantly, Sections~\ref{sec:nef} and~\ref{sec:Equations} are largely self-contained and can be read independently of Sections~\ref{sec:main result} and~\ref{sec:realizable}.

% \section*{Funding}
% The author was supported by Stanford University.

\section*{Acknowledgements}
The author is grateful to his advisor, Ravi Vakil, for continuous guidance and suggestions throughout this work. The author thanks Matt Larson for his helpful suggestions and insightful discussions, which greatly improved the presentation of this paper. He is also grateful to Christopher Eur for mentioning the formula for $c(Q_M)$ in Theorem~\ref{thm:stair} and helping sharpen the paper's contribution. %Finally, the author extends his sincere gratitude to the anonymous referees for their careful reading of the manuscript and their valuable suggestions, which have significantly improved the quality and clarity of this paper.

\section{Preliminaries}\label{sec:preliminaries}
Throughout this article, we assume that the reader is familiar with the main terminology in matroid theory. See \cite{Ox} for general background on matroid theory. All matroids in the paper are assumed to be loopless unless otherwise mentioned. In this section, we establish the foundational background and notation used throughout the paper. We recall the standard algebraic and geometric frameworks for matroids—namely the wonderful compactification, the Chow ring, and the $K$-ring. We also review key operational tools required for our computations, including the valuative property and various morphisms such as pullback, pushforward, and deletion maps.
\subsection{The wonderful variety}

Let \(E=\{1,\dots,n\}\) be the ground set, let \(\mathbb{K}\) be a field, and let \(L\subseteq \mathbb{K}^E\) be a linear subspace of dimension \(r\) that is not contained in any coordinate hyperplane. For \(S\subseteq E\) set \(L_S:=L\cap \mathbb{K}^{E\setminus S}\), where \(\mathbb{K}^{E\setminus S}\) denotes the
coordinate subspace spanned by the coordinates indexed by \(E\setminus S\). A subset \(F\subseteq E\) is a \emph{flat} if it is maximal among subsets yielding the same subspace \(L_F\). The \emph{wonderful compactification} \(W_L\) is obtained from \(\mathbb{P}L\) by iteratively blowing up the linear subspaces \(\mathbb{P}L_S\) corresponding to nontrivial flats, proceeding in increasing order of dimension and taking strict transforms at each step; see \cite{DCP}. Equivalently, one may describe the construction as: first blow up all \(\{\mathbb{P}L_S:\ \dim L_S=1\}\) (points), then blow up the strict transforms of the loci \(\{\mathbb{P}L_S:\ \dim L_S=2\}\) (lines), and so on. The result
is a smooth projective variety of dimension \(r-1\), which we denote \(W_L\). 

\subsection{Matroids and their geometric realizations}

For $S \subseteq E$, we define $e_S = \sum_{i \in S} e_i$. Let $M$ be a matroid on the ground set $E$, let $\mathcal{F} = (\emptyset \subsetneq F_1 \subsetneq \cdots \subsetneq F_k \subsetneq E)$ be a flag of proper non-empty flats of $M$, and let $\rho_\mathcal{F}$ be the cone in $\mathbb{R}^E/\mathbb{R}(1, \ldots, 1)$ generated by $\{e_F \mid F \in \mathcal{F}\}$.
\begin{definition}
The \emph{Bergman fan} of a matroid $M$, denoted $\Sigma_M$, is the fan in $\mathbb{R}^E/\mathbb{R}(1, \ldots, 1)$ whose cones are $\{\rho_\mathcal{F} \mid \mathcal{F} \text{ flag of flats of } M\}$.
\end{definition}

In this way, a matroid $M$ is associated with a toric variety $X_{\Sigma_M}$, which we will denote by $X_M$. The Boolean matroid $U_n$ on the ground set $E = \{1, \ldots, n\} = [n]$ is the matroid in which every subset is a flat. The toric variety $X_E$ associated with $U_n$ is called the \emph{permutohedral variety}. For any matroid $M$ on the ground set $E$, $\Sigma_M$ is a subfan of $\Sigma_{U_n}$, and thus the toric variety $X_{M}$ is an open subvariety of $X_E$. In particular, it is smooth.

For $L = \mathbb{K}^E$, the wonderful compactification $W_L$ equals the toric variety $X_E$, and for any $L \subseteq \mathbb{K}^E$, the inclusion $L \hookrightarrow \mathbb{K}^E$ induces an inclusion $W_L \hookrightarrow X_E$.  

We say that a matroid $M$ is \emph{realizable} or \emph{realized} by $L \subseteq \mathbb{K}^E$ if the flats of $M$ come from the flats of $L$. We say such $L$ is a \emph{realization} of $M$. For a realizable matroid $M$, the realization $L$ is not unique, and the corresponding $W_L$ could be different. Nonetheless, they share the same cohomology ring.

\begin{proposition}\cite[Remark 2.13]{BHM}
Let $L \subseteq \mathbb{K}^E$ be a realization of a matroid $M$. Then the
inclusion $W_L \hookrightarrow X_E$ factors through $X_{M}$, and the pullback map $$A^\ast(M) \xrightarrow{\sim} A^\ast(W_L)$$ is an isomorphism. Here, $A^\ast(M)$ denotes the Chow ring $A^\ast(X_{M})$.
\end{proposition}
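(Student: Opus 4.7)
The plan is to establish this in two stages: the set-theoretic factorization $W_L \hookrightarrow X_M$ first, and then the pullback isomorphism of Chow rings.

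For the factorization, I would use tropical geometry. Removing the coordinate hyperplane arrangement from $\mathbb{P}L$ yields a very affine variety $\mathbb{P}L^\circ$ that embeds into the dense torus $T_E$ of $X_E$. A computation of Ardila--Klivans identifies the tropicalization of $\mathbb{P}L^\circ$ with the support of the Bergman fan $\Sigma_M$. By the theory of tropical compactifications (Tevelev), the closure $\overline{\mathbb{P}L^\circ}$ in $X_E$ intersects a torus orbit $O_\sigma$ if and only if $\sigma \in \Sigma_M$. Consequently the closure lies in $X_M$, the open subvariety of $X_E$ associated to the subfan $\Sigma_M \subseteq \Sigma_{U_n}$. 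One then identifies $W_L$ with $\overline{\mathbb{P}L^\circ}$ by recognizing the iterated De Concini--Procesi blowup construction as a tropical compactification whose boundary fan is exactly $\Sigma_M$.

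For the Chow ring isomorphism, the strategy is to compare presentations. The ring $A^*(X_M)$ admits the Feichtner--Yuzvinsky presentation with generators $x_F$ indexed by proper nonempty flats $F$ of $M$, subject to the Stanley--Reisner relations (products over antichains of flats vanish) and the linear relations coming from the characters of $T_E$. On the other side, $A^*(W_L)$ admits the De Concini--Procesi presentation, whose generators are the classes of the exceptional divisors $E_F$ of the iterated blowup, again indexed by the proper nonempty flats of $M$, subject to the same combinatorial relations. The pullback $i^*$ sends each torus-invariant boundary divisor $D_F\subseteq X_M$ to the class of $E_F\subseteq W_L$, matching the two presentations generator-by-generator. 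Injectivity and surjectivity both follow from this matching; alternatively, surjectivity is immediate from the fact that the Chow ring of $W_L$ is generated by the exceptional divisors, and injectivity can be deduced from a Hilbert-series rank count using the known formula for $\dim A^i(M)$ on each side.

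The step I expect to be the main obstacle is the rigorous identification $W_L = \overline{\mathbb{P}L^\circ}$ inside $X_E$: this requires recognizing the De Concini--Procesi iterated blowup as a tropical compactification of the correct combinatorial type, checking that strict transforms at each stage track with the appropriate cones of $\Sigma_M$, and verifying that the resulting boundary divisors on $W_L$ correspond to flats of $M$ in a way compatible with the boundary divisors of $X_M$. Once this geometric identification is in place, the Chow ring comparison is a formal manipulation of two presentations that are already documented in the toric and wonderful compactification literature.
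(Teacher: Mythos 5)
The paper does not prove this proposition; it imports it verbatim as a citation to \cite{BHM}, so there is no internal argument to compare against. Your sketch is the standard route in the literature and is correct in outline: Ardila--Klivans identifies $\operatorname{trop}(\mathbb{P}L^\circ)$ with the support of the Bergman fan $\Sigma_M$; Tevelev's theory of tropical compactifications then shows that the closure $\overline{\mathbb{P}L^\circ}$ in $X_E$ meets only the torus orbits $O_\sigma$ with $\sigma\in\Sigma_M$, hence lies in the open subvariety $X_M$; and the Feichtner--Sturmfels theorem identifies this closure with the De Concini--Procesi variety $W_L$. For the Chow rings, the Feichtner--Yuzvinsky presentation of $A^*(X_M)$ and the De Concini--Procesi presentation of $A^*(W_L)$ do coincide, with $i^*$ sending each toric boundary divisor $D_F\subset X_M$ to the exceptional divisor $E_F\subset W_L$. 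Two points worth making explicit before calling it a proof. First, the identity $i^*[D_F]=[E_F]$ requires that $W_L$ meet $D_F$ transversally with scheme-theoretic intersection $E_F$ of multiplicity one; this is exactly the sch\"on/transversality content of the tropical compactification and should be invoked rather than assumed. Second, matching generators is not by itself enough to conclude $i^*$ is an isomorphism --- you must check that the relations in the source presentation are the full set of relations in the target (equivalently compare Hilbert series, as you note, or use Poincar\'e duality on both sides to control the kernel). With those two points supplied from the cited references, your argument is complete and agrees with how the result is established in \cite{BHM} and its antecedents.
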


This result extends to $K$-rings (the Grothendieck ring of vector bundles) as follows.

\begin{proposition} \cite[Proposition 1.6]{LLPP2024}
Let $L \subseteq \mathbb{K}^E$ be a realization of a matroid $M$. Then the
restriction map $$K(M) \xrightarrow{\sim} K(W_L)$$ is an isomorphism. Here, $K(M)$ denotes the $K$-ring $K(X_{M})$.
\end{proposition}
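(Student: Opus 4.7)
The plan is to upgrade the Chow-ring isomorphism of the previous proposition to $K$-theory by applying the blowup formula at each step of the wonderful construction. The restriction map is well defined to begin with since $W_L\hookrightarrow X_M$ is a regular closed embedding of smooth varieties, so pulling back locally free sheaves descends to a ring homomorphism of $K$-rings.

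The geometric input is the parallel iterated-blowup presentations of the two sides. By De Concini--Procesi, $W_L$ is obtained from $\mathbb{P}L$ by iteratively blowing up the strict transforms of the linear subspaces $\mathbb{P}L_F$ indexed by non-trivial flats $F$ of $M$, in order of increasing dimension. Inside $X_E$, the variety $X_M$ admits an analogous iterated-blowup description in which one blows up the torus-invariant subvarieties attached to the same flats, in the same order. The realizability hypothesis gives $\mathbb{P}L\cap V(F)=\mathbb{P}L_F$ scheme-theoretically for every flat $F$, and this transverse compatibility is preserved after passing to strict transforms throughout the construction.

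I would then invoke the $K$-theoretic blowup formula of Thomason: for a blowup $\pi\colon\widetilde{Y}\to Y$ along a regular embedding $Z\hookrightarrow Y$ of codimension $c$,
\[
K(\widetilde{Y})\;\cong\;\pi^{*}K(Y)\,\oplus\,\bigoplus_{i=1}^{c-1}K(Z),
\]
functorial under transverse base change. Applied in tandem to the two parallel constructions at matching pairs of blowup centers, this decomposition is compatible with restriction to $W_L$. Induction on the steps then reduces the isomorphism claim at stage $F$ to the isomorphism at the previous stage together with the analogous statement $K(V(F))\xrightarrow{\sim}K(\mathbb{P}L_F)$, which is the same claim for a lower-rank realization; the base case is the trivial identity on $K(\mathbb{P}L)$ at the start of the construction.

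The main technical obstacle is the naturality of the blowup-formula decomposition under the closed embedding $W_L\hookrightarrow X_M$. This reduces to the vanishing of the excess normal bundle at each blowup, equivalently the transversality of each center to the current strict transform of $\mathbb{P}L$, which is precisely the combinatorial content of realizability packaged in the identity $L_F=L\cap k^{E\setminus F}$. Once that compatibility is established, the induction proceeds formally and the ring structure passes through by functoriality of pullback.
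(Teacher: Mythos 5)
The paper does not prove this proposition; it cites it directly from \cite{LLPP2024}, so there is no in-paper argument to compare against. Evaluating your proposal on its own merits, the central step fails.

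Your argument rests on the claim that ``$X_M$ admits an analogous iterated-blowup description in which one blows up the torus-invariant subvarieties attached to the same flats, in the same order,'' parallel to the De Concini--Procesi construction of $W_L$. This is not true. The variety $X_M$ is the toric variety of the Bergman fan $\Sigma_M$, which is a \emph{subfan} of the permutohedral fan; consequently $X_M$ is an \emph{open} subvariety of $X_E$. It is the permutohedral variety $X_E$ (i.e.\ the case $M = U_n$) that is an iterated blowup of $\mathbb{P}^{n-1}$; for a proper minor $M$, the variety $X_M$ is in general neither projective nor complete and carries no blowup presentation starting from $\mathbb{P}L$ or anything else. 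Already in the simplest nontrivial example $M = U_{2,3}$, the Bergman fan has three rays and no two-dimensional cones, so $X_M$ is a non-complete toric surface while $W_L \cong \mathbb{P}^1$; there is no sequence of blowups producing $X_M$ from a point or a line, so the Thomason decomposition you invoke never gets off the ground. The induction you describe would only compare $K(W_L)$ against $K(X_E)$, which is a genuinely different (and false) assertion, since $\operatorname{rk} K(X_E) = n! > \operatorname{rk} K(W_L)$ in general.

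What your sketch is missing is any mechanism for passing from $X_E$ to its open subvariety $X_M$. The blowup formula does control $K(X_E)$, and restriction $K(X_E) \to K(X_M)$ along the open immersion is surjective (by the localization sequence for $G$-theory, using smoothness), so surjectivity of $K(X_M) \to K(W_L)$ can be salvaged from a surjectivity statement for $K(X_E) \to K(W_L)$. But injectivity of $K(X_M) \to K(W_L)$ requires an independent count of ranks, or a presentation of $K(X_M)$ by generators and relations together with a matching presentation of $K(W_L)$ — the kind of argument actually carried out in \cite{LLPP2024}, not a two-sided blowup induction.
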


\subsection{Chow ring of a matroid} \label{subsec:Chow ring of a matroid}

The Chow ring $A^*(M)$ admits a presentation as a quotient of a polynomial ring \cite[Section 5.3]{AHK18}:

\[
A^*(M) = \mathbb{Z}[x_F \mid F \text{ a nonempty proper flat of } M] / (I + J),
\]

where:
\begin{itemize}
    \item $I$ is the ideal generated by products $x_F x_G$ for incomparable flats $F$ and $G$,
    \item $J$ is the ideal generated by linear forms $\sum_{F \ni i} x_F - \sum_{F \ni j} x_F$ for each pair of elements $i, j \in E$.
\end{itemize}

When $M$ is realized by $L$, the divisor $x_F \in A^*(W_L)$ is the exceptional divisor when blowing up at $\mathbb{P}L_F$ and then taking the pullbacks under further blow-ups. In general, $X_{M}$ is not projective. The fascinating paper \cite{AHK18} proved that the Chow ring of matroids satisfies several properties. In particular, the Chow ring vanishes in degree $\geq r$, and there is a degree map $\deg_M \colon A^{r-1}(M) \to \mathbb{Z}$ that is an isomorphism, mapping any monomial corresponding to a complete flag of flats to 1. The Chow ring also satisfies Poincaré duality in the sense that $$A^i(M) \times A^{r-1-i}(M) \rightarrow A^{r-1}(M) \xrightarrow{\sim} \mathbb{Z}$$ is a perfect pairing for $0 \leq i \leq r-1$, where the last map is given by $\deg_M$.

\begin{definition}
We define $\alpha = \alpha_i =  \sum_{F \ni i} x_F \in A^1(M)$ and $\beta = \beta_i = \sum_{F \not\ni i} x_F \in A^1(M)$. Observe that the difference $\alpha_i - \alpha_j$ vanishes in the Chow ring, so $\alpha$ is independent of the element $i$ we choose. Similarly, $\beta$ is also independent of $i$.

For a set $\varnothing \subsetneq S \subset E$, we define

$$\alpha_S = \alpha - \sum_{\text{proper flats }F \supseteq S} x_F.$$

For example, $\alpha = \alpha_E$.
\end{definition}
The following definition is new.
\begin{definition} \label{beta def}
For a set $\varnothing \subsetneq S \subset E$, we define

$$\beta_S = \beta - \sum_{\text{proper flats }F \subseteq E \setminus S} x_F.$$

For example, $\beta = \beta_E$.
\end{definition}
Note that $\alpha_S = \alpha_{\operatorname{cl}(S)}$, where $\operatorname{cl}(S)$ (the closure of $S$) is the smallest flat that contains $S$. In contrast, $\beta_S$ cannot always be written as $\beta_F$ for a flat $F$. We also write $\alpha_M$ or $\beta_M$ for the $\alpha$ and $\beta$ classes for the matroid $M$ when there are multiple different matroids being discussed. We will try to make the context clear to avoid confusion between two potential meanings of the subscripts.

In the case of realizable matroids, $\alpha$ is the class of the vanishing loci of a generic section of the pullback of $\mathcal{O}(1)$ from the projective space $\mathbb{P}(\mathbb{K}^E)$ along the iterative blow-ups. Furthermore, $\alpha_F$ is the pullback of the hyperplane section that passes through the blow-up center corresponding to the flat $F$. (The class $\alpha_F$ is sometimes called $h_F$ in other papers.) 

For $E = \{1, \ldots, n\}$, we have a natural birational morphism $X_E \rightarrow \mathbb{P}\mathbb{K}^E \cong \mathbb{P}^{n-1}$ given by the blow-up construction, where the blow-up centers are the intersections of coordinate hyperplanes $x_i = 0$. The Cremona involution $$[t_1:\cdots:t_{n}] \dashrightarrow [t_1^{-1}: \cdots : t_n^{-1}]$$ induces the Cremona involution $\operatorname{crem}: X_E \rightarrow X_{E}$ (see \cite[Section 2.6]{BEST}). The Cremona map exchanges $x_I$ and $x_{E \setminus I}$. Therefore, in $X_E$ we have $\beta_S = \operatorname{crem}^*(\alpha_S)$ for any set $\varnothing \subsetneq S \subseteq E$. In particular, $\beta = \operatorname{crem}^*(\alpha)$.

For any matroid $M$ on the ground set $E$ with the inclusion $i:X_{M} \rightarrow X_E$, the pullback map $i^\ast$ sends $x_F$ to $x_F$ if $F$ is a flat of $M$, and 0 otherwise. Therefore, for any set $\varnothing \subsetneq S \subseteq E$, $\alpha_S, \beta_S$ in $M$ are the pullbacks of $\alpha_S, \beta_S$ in $X_E$, respectively.

\subsection{K-theory and Chern character} \label{subsec: K Chern}

Larson, Li, Payne, and Proudfoot introduced the $K$-ring of a general matroid in \cite{LLPP2024}. The Chern character map $\operatorname{ch} \colon K(M) \to A^\ast(M) \otimes \mathbb{Q}$, which sends a line bundle $\mathcal{L}$ to $\exp(c_1(\mathcal{L}))$, induces a ring isomorphism over $\mathbb{Q}$ \cite[Example 15.2.16(b)]{Fulton}:
\[
\operatorname{ch}: K(M)\otimes \mathbb{Q} \xrightarrow{\sim} A^*(M) \otimes \mathbb{Q}.
\]

Let $\mathcal{L}_F$ denote the line bundle associated with the divisor $\alpha_F$. There is also a ring isomorphism \cite[Theorem 1.8]{LLPP2024} \[
\zeta_M: K(M) \xrightarrow{\sim} A^*(M) ,
\]
called the \emph{exceptional isomorphism} sending $$[\mathcal{L}_F] \mapsto \frac{1}{1-\alpha_F} = 1 + \alpha_F + \alpha_F^2 + \cdots.$$
It turns out that $[\mathcal{L}_F]$ generates the $K$-ring of $M$, and $\zeta_M$ is determined by the image $\zeta_M([\mathcal{L}_F])$.

In \cite{LLPP2024}, the authors defined the Euler characteristic map $\chi:K(M) \rightarrow \mathbb{Z}$ by sending the class $K$ to 
\begin{equation} \label{Euler char exceptional}
    \chi(K) = \deg_M\left(\zeta_M(K)\cdot(1 + \alpha + \alpha^2 + \cdots)\right) \in \mathbb{Z},
\end{equation} 
which agrees with the Euler characteristic map when $M$ is realizable. Using the Poincaré duality, one shows the following. \begin{proposition}[Matroid Hirzebruch–Riemann–Roch, cf. Section \ref{subsec:HRR}] \cite[Proposition 4.11]{EL}\label{prop: HRR} 
For a matroid $M$, there is a unique Todd class $\operatorname{Todd}_M \in A^\ast(M)_{\mathbb{Q}}$ such that for $K \in K(M)$, we have $$\chi(K) = \deg_M\left(\operatorname{ch}(K) \cdot \operatorname{Todd}_M\right).$$ 

Moreover, the degree 0 part of $\operatorname{Todd}_M$ is 1. 
\end{proposition}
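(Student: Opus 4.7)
The plan is to deduce both existence/uniqueness and the normalization directly from the two structural inputs already in hand: the Chern character isomorphism $\operatorname{ch}: K(M)_\mathbb{Q} \xrightarrow{\sim} A^\ast(M)_\mathbb{Q}$ and Poincaré duality on the matroid Chow ring.

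For existence and uniqueness, I would extend $\chi$ $\mathbb{Q}$-linearly to $K(M)_\mathbb{Q}$ and transport it across the Chern character isomorphism, producing a $\mathbb{Q}$-linear functional
\[
\Lambda : A^\ast(M)_\mathbb{Q} \to \mathbb{Q}, \qquad \Lambda(a) := \chi(\operatorname{ch}^{-1}(a)).
\]
Poincaré duality states that the pairing $A^i(M)_\mathbb{Q} \times A^{r-1-i}(M)_\mathbb{Q} \to \mathbb{Q}$, $(a,b)\mapsto \deg_M(ab)$, is perfect for each $i$, so every linear functional on $A^\ast(M)_\mathbb{Q}$ is represented uniquely by multiplication by a single class in $A^\ast(M)_\mathbb{Q}$ followed by $\deg_M$. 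Applying this representation to $\Lambda$ produces the unique $\operatorname{Todd}_M\in A^\ast(M)_\mathbb{Q}$ satisfying the HRR identity for all $K\in K(M)$.

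To pin down $T_0 = 1$, where $\operatorname{Todd}_M = T_0 + T_1 + \cdots + T_{r-1}$ is the degree decomposition, I would test the identity on a $K$-class whose Chern character is concentrated in the top degree. A convenient candidate is $K := (\mathcal{L}_E - 1)^{r-1} \in K(M)$, where $\mathcal{L}_E$ is the line bundle with $c_1(\mathcal{L}_E) = \alpha = \alpha_E$. Then $\operatorname{ch}(K) = (e^{\alpha} - 1)^{r-1}$; the lowest-order term in this expansion is $\alpha^{r-1}$, and all further terms lie in $A^{\geq r}(M) = 0$, hence $\operatorname{ch}(K) = \alpha^{r-1}$. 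Similarly, using the ring homomorphism property of $\zeta_M$ and the identity $\zeta_M(\mathcal{L}_E) = 1/(1-\alpha)$, we obtain $\zeta_M(K) = \bigl(\alpha/(1-\alpha)\bigr)^{r-1}$, which collapses to $\alpha^{r-1}$ by the same degree vanishing. Plugging into \eqref{Euler char exceptional},
\[
\chi(K) \;=\; \deg_M\!\bigl(\alpha^{r-1}(1 + \alpha + \alpha^2 + \cdots)\bigr) \;=\; \deg_M(\alpha^{r-1}) \;=\; 1,
\]
while the right-hand side of the HRR identity for this $K$ reduces to $\deg_M(\alpha^{r-1}\cdot\operatorname{Todd}_M) = T_0\cdot\deg_M(\alpha^{r-1}) = T_0$, forcing $T_0 = 1$.

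The only real subtlety is conceptual rather than computational: one must recognize that $T_0$ is the component that pairs with the top-degree class $\alpha^{r-1}$, morally the ``class of a point,'' and then manufacture an explicit $K$-theoretic lift of $\alpha^{r-1}$. The element $(\mathcal{L}_E - 1)^{r-1}$ is the natural choice precisely because each factor $\mathcal{L}_E - 1$ raises the minimal degree of its Chern character by one. Note that evaluating the HRR identity on $\mathcal{O}_M = 1$ instead would isolate only the top component $T_{r-1}$ via $\chi(\mathcal{O}_M) = \deg_M(T_{r-1})$, reproducing the classical identity $\chi(\mathcal{O}_X) = \deg(\operatorname{td}(T_X))$; it does not touch the normalization $T_0 = 1$, which is why the complementary calculation above is essential.
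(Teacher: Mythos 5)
Your proof is correct, and the existence/uniqueness step (Poincaré duality applied to the functional $\chi\circ\operatorname{ch}^{-1}$) is exactly the route the paper indicates. The only divergence is in pinning down $T_0=1$: the paper suggests testing against $K=\mathcal{L}_E^{\otimes k}$ and comparing the leading coefficient of the resulting polynomial in $k$ against $\binom{k+r-1}{r-1}$, whereas you test against the single class $(\mathcal{L}_E-1)^{r-1}$, whose Chern character and $\zeta_M$-image both collapse to $\alpha^{r-1}$ by degree vanishing, so that one evaluation gives $T_0$ directly. This is a clean and slightly more economical variant of the same idea; both your computation of $\chi\bigl((\mathcal{L}_E-1)^{r-1}\bigr)=1$ and the identification of the right side with $T_0\deg_M(\alpha^{r-1})$ are correct.
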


The degree 0 part of $\operatorname{Todd}_M$ can be deduced by considering $K = \mathcal{L}_E^{\otimes k}$. However, to the author's knowledge, there is no known general formula to calculate $\operatorname{Todd}_M$. We will provide a method to compute it via Theorem \ref{tangent bundle formula} and Corollary \ref{Todd class formula}. The Todd class for the special case where $M = U_{n}$ was extensively studied in \cite{Todd}.

\subsection{Maps between Chow ring of matroids}

Let $M = (E, \mathcal{I})$ be a matroid and let $S \subseteq E$. We denote by $M^S$, $M \setminus S$, and $M_S$ the restriction, deletion, and contraction, respectively.

\begin{lemma}[Pullback map]\label{lem:pullback}\cite[Section 2.6]{BHM}
Let $M$ be a matroid of rank $r$, and let $F$ be a nonempty proper flat of $M$.  There is a unique graded algebra homomorphism
\[
\varphi^F_M \colon A^\ast(M) \;\longrightarrow\; A^\ast(M_F)\,\otimes\,A^\ast(M^F),
\]
called the \emph{pullback map}, such that for each flat \(G\) of \(M\),
\[
\varphi^F_M(x_G)
=
\begin{cases}
0, & F\text{ and }G\text{ are incomparable},\\
1\otimes x_G, & G\subsetneq F,\\
x_{G \setminus F}\otimes 1, & F\subsetneq G.
\end{cases}
\]
Moreover, \(\varphi^F_M\) is surjective and additionally satisfies
\begin{align*}
\varphi^F_M(x_F)
  &= -\bigl(1\otimes \alpha_{M^F} \;+\;\beta_{M_F}\otimes 1\bigr),\\
\varphi^F_M(\alpha_M)
  &= \alpha_{M_F}\otimes 1,\\
\varphi^F_M(\beta_M)
  &= 1\otimes \beta_{M^F}.
\end{align*}

\end{lemma}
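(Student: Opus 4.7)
The plan is to obtain uniqueness for free from the generators, define $\varphi^F_M$ by specifying images on the generators of the polynomial presentation, and then check that the presenting ideal $I+J$ is killed; the formula for $\varphi^F_M(x_F)$ will be forced by the linear relations rather than chosen independently.

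Uniqueness is immediate because the classes $x_G$, over nonempty proper flats $G$ of $M$, generate $A^*(M)$, and the three bulleted cases together with the ``moreover'' clause fix the image of every such generator. For existence I would lift to a graded ring homomorphism $\widetilde\varphi:\mathbb{Z}[x_G]\to A^*(M_F)\otimes A^*(M^F)$ and verify descent to the quotient. Killing the incomparability relations $x_Gx_H$ is a short case split on the position of $G,H$ relative to $F$: if either is incomparable to $F$, the image already vanishes; if both are strictly below $F$, the image is $1\otimes x_Gx_H$ with $G,H$ incomparable in $M^F$; if both strictly contain $F$, the image is $x_{G\setminus F}x_{H\setminus F}\otimes 1$ with $G\setminus F,H\setminus F$ incomparable in $M_F$; and the remaining configuration $G\subsetneq F\subsetneq H$ would force $G,H$ comparable, so does not arise.

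For the linear relations I need $\widetilde\varphi\bigl(\sum_{G\ni i}x_G\bigr)$ to be independent of $i\in E$. For $i\notin F$, only flats $G\supsetneq F$ with $i\in G$ contribute, yielding $\alpha_{M_F}\otimes 1$ directly. For $i\in F$, the contributions split as follows: flats $G\subsetneq F$ with $i\in G$ contribute $1\otimes\alpha_{M^F}$; every $G\supsetneq F$ contributes, totalling $(\alpha_{M_F}+\beta_{M_F})\otimes 1$ after recognizing $\sum_H x_H=\alpha_{M_F}+\beta_{M_F}$ in $A^*(M_F)$; and $G=F$ contributes $\widetilde\varphi(x_F)$. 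Enforcing agreement with the $i\notin F$ value forces
\[
\widetilde\varphi(x_F)=-\bigl(1\otimes\alpha_{M^F}+\beta_{M_F}\otimes 1\bigr),
\]
which is at once the consistency check, the claimed formula for $\varphi^F_M(x_F)$, and the identity $\varphi^F_M(\alpha_M)=\alpha_{M_F}\otimes 1$. A symmetric bookkeeping for $\sum_{G\not\ni i}x_G$, splitting again on $i\in F$ versus $i\notin F$, yields $\varphi^F_M(\beta_M)=1\otimes\beta_{M^F}$.

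Surjectivity is then immediate: $1\otimes x_G$ is hit by $x_G\in A^*(M)$ for every nonempty proper flat $G\subsetneq F$, and $x_H\otimes 1$ is hit by $x_{H\cup F}\in A^*(M)$ for every nonempty proper flat $H$ of $M_F$. The only genuinely delicate point in the argument is the $i\in F$ accounting above; the key identification $\sum_H x_H=\alpha_{M_F}+\beta_{M_F}$ in $A^*(M_F)$ is what pins down the exact formula for $\varphi^F_M(x_F)$, after which everything else reduces to mechanical case analysis.
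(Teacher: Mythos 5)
Your proof is correct. The paper does not itself prove this lemma; it cites it from \cite[Section~2.6]{BHM}, so there is no in-paper argument to compare against. Your self-contained verification is a clean execution of the obvious strategy: lift to the polynomial ring, check the incomparability relations case by case, and use the linear relations to simultaneously pin down $\varphi^F_M(x_F)$ and establish the formulas for $\varphi^F_M(\alpha_M)$ and $\varphi^F_M(\beta_M)$. Two small points worth making explicit, though both are correct as you have them: first, when checking that incomparable $G,H\supsetneq F$ yield incomparable $G\setminus F,H\setminus F$ in $M_F$, one should note that $G\setminus F\subseteq H\setminus F$ and $F\subseteq G\cap H$ together force $G\subseteq H$, so incomparability passes through the contraction. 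Second, your consistency check of the linear relations tacitly uses that the target is the tensor product of the \emph{Chow rings} $A^*(M_F)\otimes A^*(M^F)$ rather than of polynomial rings; it is exactly the linear relations in each factor that make $1\otimes\alpha_{M^F}$, $\alpha_{M_F}\otimes 1$, and $\beta_{M_F}\otimes 1$ independent of the auxiliary element $i$, which is what closes the loop. Your surjectivity argument is also fine: $1\otimes x_G$ and $x_H\otimes 1$ generate the target ring and are hit by $x_G$ and $x_{H\cup F}$ respectively.
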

There is also a pushforward map.

\begin{lemma}[Pushforward map]\label{lem:pushforward}\cite[Section 2.6]{BHM}
Let $M$ be a matroid of rank $r$, and let $F$ be a nonempty proper flat of $M$.  There is a unique graded algebra homomorphism
\[
\psi^F_M \colon A^\ast(M_F)\,\otimes\,A^\ast(M^F) \;\longrightarrow\; A^\ast(M),
\]
called the \emph{pushforward map}, that satisfies, for any collection $\mathcal{S}_1$ of proper flats of $M$ strictly containing $F$ and any collection $\mathcal{S}_2$ of nonempty proper flats of $M$ strictly contained in $F$,
\[
\psi^F_M\big(\prod_{F'\in \mathcal{S}_1}x_{F'\setminus F} \otimes \prod_{F''\in\mathcal{S}_2} x_{F''}\big) = x_F \prod_{F'\in \mathcal{S}_1}x_{F'}\prod_{F''\in\mathcal{S}_2} x_{F''}.
\]

The composition $\psi^F_M \circ \varphi^F_M$ is multiplication by the element $x_F$, and the composition $\varphi^F_M \circ \psi^F_M$ is multiplication by the element $\varphi^F_M(x_F)$. In particular, for $f \in A^{r-2}(M)$, we have $\deg_M(f \cdot x_F) = \deg_F(\varphi_M^F(f))$, where $\deg_F$ is defined as $\deg_{M_F} \otimes \deg_{M^F}$.
\end{lemma}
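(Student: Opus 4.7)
The plan is to construct $\psi^F_M$ at the level of polynomial generators, verify it descends through the defining relations of $A^*(M_F)\otimes A^*(M^F)$, and then deduce the compositional and degree statements directly. Recall that the nonempty proper flats $H$ of $M_F$ correspond bijectively to flats $H'\supsetneq F$ of $M$ via $H=H'\setminus F$, while the nonempty proper flats of $M^F$ are exactly the nonempty flats of $M$ strictly contained in $F$. I first define a graded linear map on the tensor product of polynomial rings $\mathbb{Z}[y_H]\otimes\mathbb{Z}[z_G]$ by $y_H\otimes 1\mapsto x_{H'}$, $\;1\otimes z_G\mapsto x_G$, and $\;1\otimes 1\mapsto x_F$, extended multiplicatively inside $A^*(M)$. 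This recovers the displayed formula, and uniqueness is then automatic since the monomials $\prod y_H\otimes\prod z_G$ linearly span $A^*(M_F)\otimes A^*(M^F)$.

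The main verification is that the defining ideals of $A^*(M_F)$ and $A^*(M^F)$ land in the kernel. Incomparability relations are immediate: if $H_1,H_2\supsetneq F$ are incomparable, then so are $H_1',H_2'$ in $M$, so $x_F x_{H_1'}x_{H_2'}=0$; the same holds for incomparable flats strictly contained in $F$. For the linear-form relations on the $M_F$ side, take $i,j\in E\setminus F$ and the relation $\sum_{H\supsetneq F,\,H\ni i}x_{H\setminus F}=\sum_{H\supsetneq F,\,H\ni j}x_{H\setminus F}$. Applying $\psi^F_M$ gives $x_F\bigl(\sum_{H\supsetneq F,\,H\ni i}x_H-\sum_{H\supsetneq F,\,H\ni j}x_H\bigr)$, and using $x_F\cdot x_{H'}=0$ whenever $H'$ is incomparable with $F$, together with the global relation $\sum_{H\ni i}x_H=\sum_{H\ni j}x_H$ in $A^*(M)$, this vanishes. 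The $M^F$ relations (with $i,j\in F$) are analogous, except one must observe that $x_F\cdot\sum_{H\supseteq F,\,H\ni i}x_H=x_F\cdot\sum_{H\supseteq F}x_H$ since every flat containing $F$ automatically contains $i,j\in F$, so the contribution from flats above $F$ is symmetric in $i,j$ and cancels.

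Next I would verify the compositional identities. For $\psi^F_M\circ\varphi^F_M$ equaling multiplication by $x_F$, check on the generator $x_G$ of $A^*(M)$ using the four cases of Lemma~\ref{lem:pullback}: the incomparable case gives $0=x_F x_G$; the cases $G\subsetneq F$ and $G\supsetneq F$ give $x_F x_G$ by direct substitution; and the case $G=F$ reduces to recognizing that $\psi^F_M\bigl(1\otimes\alpha_{M^F}+\beta_{M_F}\otimes 1\bigr)=-x_F^2$, which follows from the linear-form relations in $A^*(M)$ after killing cross terms with $x_F$. The identity $\varphi^F_M\circ\psi^F_M=\varphi^F_M(x_F)\cdot(-)$ follows by the same style of case analysis on the generating monomials, using surjectivity of $\varphi^F_M$ asserted in Lemma~\ref{lem:pullback}.

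Finally, the degree formula drops out: $\psi^F_M$ sends a top-degree monomial $\prod x_{H\setminus F}\otimes\prod x_G$ (corresponding to complete flags of $M_F$ and $M^F$) to $x_F\prod x_H\prod x_G$, which is a complete-flag monomial of $A^*(M)$ passing through $F$, so both sides evaluate to $1$ under their respective degree maps. Hence $\deg_M\circ\psi^F_M=\deg_F$ on top degree, and combined with $\psi^F_M\circ\varphi^F_M=x_F\cdot(-)$ this gives $\deg_M(f\cdot x_F)=\deg_F(\varphi^F_M(f))$ for $f\in A^{r-2}(M)$. The main obstacle throughout is the careful bookkeeping in the linear-form verification, where one must track how flats incomparable with $F$ (which die after multiplication by $x_F$) and flats containing $F$ (which contribute symmetric extra terms) interact with the relations being pushed forward.
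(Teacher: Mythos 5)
The lemma is cited from \cite[Section 2.6]{BHM}; the paper gives no proof of its own, so there is nothing to compare against directly. Evaluated on its own terms, your outline takes the natural approach — define $\psi^F_M$ on polynomial monomials, check it kills the defining ideals, then verify the compositions — and most of the individual verifications (incomparability relations, linear-form relations on each side, the $\varphi^F_M\circ\psi^F_M$ identity via $\varphi^F_M$ being a ring map, and the degree formula) are correct. Two remarks, though.

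First, a terminological point that matters for the logic: the map $\psi^F_M$ sends $1\otimes 1$ to $x_F$, so it is \emph{not} a ring homomorphism, despite the phrasing in the lemma. You correctly call it a graded linear map, but then say ``extended multiplicatively,'' which would force $1\otimes 1\mapsto 1$. What you actually mean — and what the displayed formula says — is that $\psi^F_M$ is defined on each monomial $\prod y_H\otimes\prod z_G$ by multiplying the images of the generators and then multiplying by a single extra $x_F$. Worth stating plainly, since the non-multiplicativity is exactly why the next point arises.

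Second, there is a genuine gap in the verification that $\psi^F_M\circ\varphi^F_M$ is multiplication by $x_F$. You propose to ``check on the generator $x_G$ of $A^*(M)$.'' But $\psi^F_M\circ\varphi^F_M$ is a linear (not ring) endomorphism of $A^*(M)$, so agreeing with $x_F\cdot(-)$ on degree-$1$ elements says nothing about higher degrees; one must check on all monomials $x_{G_1}\cdots x_{G_k}$, or — cleaner — first establish that $\psi^F_M$ is a module map over $\varphi^F_M$: for $a\in A^*(M)$ and $m\in A^*(M_F)\otimes A^*(M^F)$,
\[
\psi^F_M\bigl(\varphi^F_M(a)\cdot m\bigr)=a\cdot\psi^F_M(m).
\]
This reduces by linearity and multiplicativity in $a$ to the case $a=x_G$, $m$ a monomial, and the only nontrivial subcase is $G=F$, where the identity needed is
\[
x_F^2\cdot\psi^F_M(m)
=-\Bigl(x_F\!\!\sum_{\substack{G\subsetneq F\\ G\ni i}}\!\!x_G+x_F\!\!\sum_{\substack{H\supsetneq F\\ H\not\ni j}}\!\!x_H\Bigr)\psi^F_M(m),
\]
which follows from multiplying the linear relation for $i\in F$, $j\notin F$ by $x_F$ and discarding incomparable terms. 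Once the module property is in place, $\psi^F_M(\varphi^F_M(a))=a\cdot\psi^F_M(1\otimes 1)=a\cdot x_F$ for all $a$, which is the statement. Without this intermediate step, your degree-$1$ check does not close the argument.
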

\begin{definition}[Deletion map]\label{lem:deletion}\cite[Lemma 3.3]{BHM}
For a matroid $M$ on the ground set $E$, and an element $i \in E$, there is a deletion map $$\theta_i: A^\ast(M \setminus i) \rightarrow A^\ast(M), \;\; x_F \mapsto x_F + x_{F \cup \{i\}}$$ where a variable in the target is set to zero if its label is not a flat of $M$. 

Moreover, if $i$ is not a coloop, then $$\deg_{M \setminus i} = \deg_{M} \circ \;\theta_i.$$

If $i$ is a coloop, then $$\deg_{M \setminus i} = \deg_{M} \circ \;\alpha_M \circ \;\theta_i,$$ where the middle map in the composition denotes multiplication by $\alpha_M$ in the Chow ring of $M$.
\end{definition}

\subsection{Valuative Invariants}

For a matroid $M$ on the ground set $E$, the \emph{matroid polytope} $P_M \subset \mathbb{R}^E$ is defined to
be the convex hull of all $e_B$, where $B$ is a basis of $M$.

A function $f$ from the class of matroids on the ground set $E$ to an abelian group is called \emph{valuative} if it factors through the map that assigns to each matroid $M$ the indicator function of its matroid polytope $P_M$. That is, for any matroids $M_1, \ldots , M_k$ and integers $a_1, \ldots, a_k$ such that $\sum a_i 1_{P_{M_i}} = 0$, we require
that $\sum a_i f(P_{M_i}) = 0$.
\begin{proposition} \label{prop:linear} \cite[Corollary 7.9]{EHL} or \cite[Theorem 5.4]{DF}
For a matroid $M$,  the indicator function of its matroid polytope $P_M$ can be expressed as a linear combination of indicator
functions of matroid polytopes of Schubert matroids of the same rank. Moreover, Schubert matroids are realizable over any infinite field. 
\end{proposition}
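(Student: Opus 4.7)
The plan is to address the two assertions separately.

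For the decomposition claim, I would appeal to the valuative framework of Derksen--Fink. Recall that for $I = \{i_1 < \cdots < i_r\} \subseteq [n]$, the Schubert matroid $SM(I)$ has basis set $\{B = \{b_1 < \cdots < b_r\} \in \binom{[n]}{r} : b_k \geq i_k \text{ for all } k\}$. The central claim is that $\{\mathbf{1}_{P_{SM(I)}}\}_{I \in \binom{[n]}{r}}$ spans the abelian group of indicator functions of rank-$r$ matroid polytopes on $[n]$ modulo the valuative relations. I would establish this by induction on a natural complexity statistic of $M$ (for example, the sum of Gale positions of its bases): at each step, $\mathbf{1}_{P_M}$ is rewritten as a signed integer combination of $\mathbf{1}_{P_{M'}}$ for matroids $M'$ of strictly smaller complexity, via a local ``sliding'' / matroid-subdivision identity. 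The induction terminates at Schubert matroids themselves, yielding the desired integer linear combination.

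For realizability of $SM(I)$ over an infinite field $k$, I would write down an explicit echelon matrix. Let $A \in k^{r \times n}$ have $A_{k,j}=0$ for $j<i_k$, $A_{k,i_k}=1$, and $A_{k,j}=t_{k,j}$ for $j>i_k$, where the $t_{k,j}$ are free parameters. For $B = \{b_1 < \cdots < b_r\} \in \binom{[n]}{r}$, the minor $\det A_{[r],B}$ vanishes identically when $b_k < i_k$ for some $k$ (by the echelon pattern), and is a nonzero polynomial in the $t_{k,j}$ otherwise---the monomial coming from the identity permutation is a (possibly empty) product of distinct $t$-variables that cannot be produced by any other permutation, so it does not cancel. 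Since $k$ is infinite, I can choose the $t_{k,j}$ to simultaneously avoid the finitely many vanishing loci of these polynomials, yielding a realization of $SM(I)$ over $k$.

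The main obstacle is the first assertion: a fully self-contained proof requires either the Hopf-algebraic machinery of Derksen--Fink or the explicit basis-construction of Eur--Huh--Larson, both of which are substantial developments. In the context of this paper I would simply cite these references directly, since Proposition~\ref{prop:linear} is invoked only as a reduction tool for transferring identities from the realizable setting (where Schubert matroids arise from actual Schubert cells) to arbitrary matroids.
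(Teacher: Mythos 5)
The paper provides no proof of this proposition: it is imported as a cited background result from [EHL] and [DF], with only a short remark afterward noting that if $M$ is assumed loopless, the matroids appearing in the decomposition can also be taken loopless. Your proposal is therefore doing more than the paper does, and you correctly recognize at the end that the decomposition claim should simply be cited.

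Of the two halves you sketch, the realizability-over-infinite-fields argument via a generic row-echelon matrix is correct and essentially the standard one: distinct permutations contribute distinct monomials in the $t_{k,j}$ (since the variable $t_{k,j}$ encodes both its row and column), so the minor $\det A_{[r],B}$ is a nonzero polynomial precisely when $b_k \ge i_k$ for all $k$, and over an infinite field one can choose the $t_{k,j}$ outside the finitely many hypersurfaces where these nonzero polynomials vanish. The decomposition half, by contrast, is only a heuristic outline. ``Induct on a complexity statistic and apply a local sliding/subdivision identity'' names a plausible strategy but does not identify the actual identity, why it strictly decreases the statistic, or why the process terminates at Schubert matroids rather than at some other class; as stated it is not a proof and would need the full Derksen--Fink or Eur--Huh--Larson machinery to be made rigorous. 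Since the paper's role for Proposition~\ref{prop:linear} is purely that of a transfer tool from realizable to arbitrary matroids, citing the references (as you ultimately suggest) is the appropriate move; you may also want to observe, as the paper's remark does, that the decomposition can be arranged to use only loopless Schubert matroids when $M$ is loopless, since that refinement is actually used downstream.
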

\begin{remark}
Although the original statement applies to all matroids (possibly having loops), if we assume $M$ to be loopless, we can take every matroid in the linear combination to be loopless.
\end{remark}
Therefore, if we show that a valuative function is zero for realizable matroids over $\mathbb{C}$, then it is zero for all matroids, and we can extend geometric and combinatorial results from realizable to non-realizable matroids.

\section{Tangent bundle and its Chern class}\label{sec:main result}

In this section, we define the cotangent and tangent classes of a general matroid by adapting the conormal exact sequence from classical algebraic geometry. We then use the tautological quotient class to compute explicit formulas for the total Chern class of the tangent bundle. Let $X_E$ be the permutohedral variety associated with the ground set $E = \{1, \dots, n\}$. For a matroid $M$ of rank $r$ on $E$, \cite[Definition 3.9]{BEST} defines the \emph{tautological quotient $K$-class} $Q_M \in K(X_E)$. If $M$ is realizable by $L \subset \mathbb{K}^E$, then $Q_M$ is the class of a vector bundle $Q_L$ of rank $n - r$ on $X_E$ which admits a regular section whose vanishing locus is $W_L$. In this realizable case, $Q_L$ is the normal bundle of $W_L$ in $X_E$. Denote the inclusion by $i:W_L \hookrightarrow X_E$. The following definition is motivated by the conormal exact sequence for the ideal sheaf $\mathcal{I}$ of $W_L$ in $X_E$:
\[
0 \to \mathcal{I}/\mathcal{I}^2 \to i^* \Omega_{X_E} \to \Omega_{W_L} \to 0.
\]

\begin{definition} \label{tangent bundle def}
For an arbitrary matroid $M$, we define 
\[
\widetilde{\Omega}_M := \Omega_{X_E} - Q_M^\vee \in K(X_E),
\]
and its dual:
\[
\widetilde{T}_M := T_{X_E} - Q_M \in K(X_E).
\]
In the realizable case, $i^*\widetilde{\Omega}_M = \Omega_{W_L}$ and $i^*\widetilde{T}_M = T_{W_L}$. 

For an arbitrary matroid $M$, we write $i_M:X_M \hookrightarrow X_E$. We set $\Omega_M = i_M^*(\widetilde{\Omega}_M)$ and $T_M = i_M^*(\widetilde{T}_M)$,  and call $\Omega_M$ and $T_M$ the \emph{cotangent bundle} and \emph{tangent bundle} of $M$, respectively. Note that these are $K$-classes in $K(M)$.
\end{definition}

\begin{definition} \label{Sidef}
For a matroid $M$ of rank $r$ and an integer $k > 0$, we define $$S_{k, M} = \sum_{F \text{ is a rank } r-k \text{ flat }} x_F \in A^1(M).$$
\end{definition}

The total Chern class of $T_M$ is $$c(T_M) = \frac{c\left(i^*_M(T_{X_E})\right)}{c\left(i^*_M(Q_M)\right)}.$$

By \cite[Proposition 13.1.2]{Toric}, the Chern class of the tangent bundle $T_{X_E}$ is given by $\prod_{\varnothing \subsetneq I \subsetneq E }(1+x_I)$. Upon restriction to $X_M$, only the terms corresponding to flats of $M$ survive. Therefore, \begin{equation}\label{eq:tangent bundle formula}
    c\left(i_M^*(T_{X_E})\right)= \prod_{\text{proper flats }F}(1+x_F) = \prod_{i=1}^{r-1}(1+S_{i,M}),
\end{equation} where the latter equality holds because two flats of the same rank are always incomparable.

The Chern class of $i^*_M(Q_M)$ is given as follows.

\begin{theorem} \cite[Appendix 3]{BEST} or \cite[Section 4.3]{Staircase} \label{thm:stair}
For a matroid $M$ of rank $r$, the total Chern class of $i^*_M(Q_M)$ is given by $$\prod_{i=0}^{r-1}\frac{1}{(1+\alpha -\sum_{j=1}^i S_{j, M})}.$$
\end{theorem}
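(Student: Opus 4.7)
The plan is to reduce the identity to the realizable case via Proposition~\ref{prop:linear}, and then to compute the Chern class on the wonderful compactification $W_L$ using the tautological short exact sequence.

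Both sides of the asserted identity are valuative functions of $M$. The right-hand side is manifestly valuative, since each $S_{j,M}$ is a sum of divisors $x_F$ indexed by rank-$(r-j)$ flats (a standard valuative invariant), and sums, products, and inverses of valuative classes remain valuative. For the left-hand side, $[Q_M] \in K(X_E)$ is valuative in $M$ (a key property from BEST) and Chern classes depend polynomially on $K$-classes, so $c(i_M^* Q_M)$ is valuative as well. By Proposition~\ref{prop:linear}, it therefore suffices to verify the identity when $M$ is a Schubert matroid; these are realizable over $\mathbb{C}$.

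Suppose then that $M$ is realized by $L \subset \mathbb{C}^E$. The tautological short exact sequence
$$0 \to S_L \to \mathcal{O}_{X_E}^{\oplus n} \to Q_L \to 0$$
yields $c(i_M^* Q_M) = c(i_M^* S_L)^{-1}$, reducing the task to proving
$$c(i_M^* S_L) = \prod_{i=0}^{r-1}\Big(1+\alpha - \sum_{j=1}^{i} S_{j,M}\Big).$$
I would prove this by exhibiting a rank-$r$ filtration $0 = S^{(0)} \subset S^{(1)} \subset \cdots \subset S^{(r)} = i_M^* S_L$ whose $i$-th successive quotient is a line bundle with first Chern class $\alpha - \sum_{j=1}^{i-1} S_{j,M}$. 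This filtration is adapted to the De Concini--Procesi blowup tower defining $W_L$: at the stage when strict transforms of $\mathbb{P}L_F$ for rank-$k$ flats $F$ are blown up, a new rank-one piece of $S_L$ is pinned down as a line bundle whose first Chern class equals the hyperplane class $\alpha$ twisted down by the accumulated exceptional divisors $\sum_{j<k} S_{j,M}$ from the previous stages. Multiplying the total Chern classes of the successive quotients then yields the desired product.

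The principal obstacle is the clean construction of this filtration and the identification of the first Chern classes of its successive quotients, which requires careful bookkeeping through the blowup tower and the strict-transform relations. A robust alternative is to use equivariant Chern classes with respect to the natural torus action on $X_E$: compute $c(i_M^* S_L)$ via contributions at $T$-fixed points, which correspond to complete flags of flats of $M$, and verify the resulting generating-function identity against the right-hand side term by term. In either approach, the valuativity reduction of the first paragraph then propagates the realizable computation to arbitrary matroids.
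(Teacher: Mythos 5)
First, note that this theorem is \emph{cited}, not proved, in the paper: it appears with a reference to [BEST, Appendix~3] and [Staircase, Section~4.3], and the paper invokes it as a black box. So there is no ``paper's own proof'' to compare against; you are attempting to reconstruct an argument that the paper deliberately outsources.

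Your two-stage plan---valuativity reduction to realizable matroids, then a geometric computation---is structurally reasonable, but as written it has two gaps. The smaller one: the short exact sequence $0\to S_L\to\mathcal{O}_{X_E}^{\oplus n}\to Q_L\to 0$ with trivial middle bundle is not something you can write down from scratch. If the map $L\otimes\mathcal{O}\to\mathcal{O}^{\oplus n}$ were the constant inclusion of $L$ into $k^n$, both $S_L$ and $Q_L$ would be trivial bundles. The correct statement---that $\mathcal{S}_L$ and $\mathcal{Q}_L$ arise by pulling back the tautological sequence on $\mathrm{Gr}(r,n)$ along the extended orbit map $X_E\to\mathrm{Gr}(r,n)$, $t\mapsto t\cdot L$ (which extends because the braid fan refines the normal fan of the matroid polytope)---is the content of the BEST construction, and should be invoked rather than asserted; only then do you get $c(\mathcal{O}^{\oplus n})=1$ and hence $c(i_M^*Q_M)=c(i_M^*S_L)^{-1}$. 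Similarly, your valuativity step needs the degree-pairing formulation of Proposition~\ref{prop:valuative}: the phrase ``both sides are valuative'' is not directly meaningful for classes living in the varying ring $A^*(X_M)$, so one must pair against a fixed $A\in A^*(X_E)$ and apply $\deg_M$.

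The larger gap is that the proposed filtration of $i_M^*S_L$ with line-bundle quotients of first Chern class $\alpha-\sum_{j<k}S_{j,M}$ is precisely the ``staircase'' filtration that is the main theorem of the cited reference; it is the entire mathematical content of the statement you are trying to prove. You describe what such a filtration would accomplish and acknowledge that its construction is ``the principal obstacle,'' but you neither construct it through the blowup tower nor carry out the equivariant-localization alternative you mention. As written, the proposal therefore presupposes the conclusion rather than establishing it.
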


As a result, we have the following.

\begin{theorem} \label{Chris tangent formula}
For a matroid $M$ of rank $r$, the total Chern class of its tangent K-class $T_M$ is given by $$c(T_M) = \left(\prod_{i=1}^{r-1}(1+S_{i,M})\right)\cdot\left(\prod_{i=0}^{r-1}(1+\alpha-\sum_{j=1}^iS_{j,M})\right).$$
\end{theorem}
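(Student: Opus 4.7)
The plan is to combine the two ingredients assembled in the paragraph immediately preceding the theorem. By Definition~\ref{tangent bundle def}, $T_M = i_M^*(T_{X_E}) - i_M^*(Q_M)$ in $K(M)$, and the total Chern class is multiplicative on $K$-theoretic sums, so
$$c(T_M) \;=\; \frac{c(i_M^*T_{X_E})}{c(i_M^*Q_M)}.$$
Thus the task reduces to computing the two factors and multiplying.

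For the numerator, I would use the standard toric formula $c(T_{X_E}) = \prod_{\varnothing\subsetneq I \subsetneq E}(1+x_I)$ (\cite[Proposition 13.1.2]{Toric}). The pullback along $i_M$ sends $x_I$ to $x_I$ if $I$ is a proper flat of $M$ and to $0$ otherwise; the factors with $I$ not a flat become $1$ and drop out, leaving $\prod_{F}(1+x_F)$ where $F$ ranges over proper flats of $M$. Now I would exploit the key combinatorial fact: any two distinct flats of the same rank are incomparable, so $x_F x_G = 0$ in $A^*(M)$ by the incomparability relations in the presentation of the Chow ring. Hence when I group the product by rank, every cross term in the expansion of $\prod_{F\text{ of rank }i}(1+x_F)$ vanishes, and the product collapses to $1 + \sum_{F\text{ of rank }i} x_F = 1+S_{i,M}$. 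Letting $i$ run from $1$ to $r-1$ yields
$$c(i_M^*T_{X_E}) \;=\; \prod_{i=1}^{r-1}(1+S_{i,M}).$$

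For the denominator I would invoke Theorem~\ref{thm:stair}, which gives
$$c(i_M^*Q_M) \;=\; \prod_{i=0}^{r-1}\frac{1}{1+\alpha-\sum_{j=1}^i S_{j,M}}.$$
Dividing by this is the same as multiplying by $\prod_{i=0}^{r-1}(1+\alpha-\sum_{j=1}^i S_{j,M})$, which produces exactly the second factor in the stated formula. Assembling the two pieces gives the claimed expression for $c(T_M)$.

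There is no real obstacle here: once the numerator computation is carried out, the theorem is a formal consequence of the definition of $T_M$, multiplicativity of the total Chern class, and the invocation of Theorem~\ref{thm:stair}. The one substantive point worth highlighting is the rank-stratification step, where incomparability of equal-rank flats converts a product of $(1+x_F)$ into a sum $1+S_{i,M}$; everything else is bookkeeping.
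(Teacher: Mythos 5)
Your proposal is correct and matches the paper's argument exactly: the paper derives $c(T_M)$ as $c(i_M^*T_{X_E})/c(i_M^*Q_M)$, collapses the restricted toric Chern class to $\prod_{i=1}^{r-1}(1+S_{i,M})$ using incomparability of equal-rank flats, and substitutes Theorem~\ref{thm:stair} for the denominator. No gaps or meaningful differences.
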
 

\begin{corollary} \label{Only AG part}
For a matroid $M$ of rank $r$,  the total Chern class $c(T_M)$ lies in $\mathbb{Z}[\alpha_M, S_{1, M}, \ldots, S_{r-1, M}].$  
\end{corollary}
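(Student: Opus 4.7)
The corollary is essentially a direct consequence of the explicit formula given in Theorem~\ref{Chris tangent formula}. The plan is simply to observe that every factor appearing in the expression
\[
c(T_M) \;=\; \left(\prod_{i=1}^{r-1}(1+S_{i,M})\right)\cdot\left(\prod_{i=0}^{r-1}\Bigl(1+\alpha_M-\sum_{j=1}^{i}S_{j,M}\Bigr)\right)
\]
is manifestly an element of the subring $\mathbb{Z}[\alpha_M, S_{1,M},\ldots,S_{r-1,M}] \subseteq A^*(M)$. Since this subring is closed under multiplication and contains each of the $r-1$ factors from the first product and the $r$ factors from the second product, their product lies in the same subring.

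More concretely, the first step is to note that for $1 \le i \le r-1$, the factor $1 + S_{i,M}$ is by definition a $\mathbb{Z}$-linear polynomial in the generator $S_{i,M}$. Second, for $0 \le i \le r-1$, the factor $1 + \alpha_M - \sum_{j=1}^{i} S_{j,M}$ is a $\mathbb{Z}$-linear combination of $1, \alpha_M, S_{1,M}, \ldots, S_{i,M}$. Expanding the product of these $(r-1) + r$ factors yields a polynomial with integer coefficients in the generators $\alpha_M, S_{1,M}, \ldots, S_{r-1,M}$.

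There is no real obstacle here; the only mild point worth mentioning is that although $c(T_M)$ is a priori defined as the ratio $c(i_M^*T_{X_E})/c(i_M^*Q_M)$, which could introduce denominators in $A^*(M)_{\mathbb{Q}}$, Theorem~\ref{Chris tangent formula} already carries out the cancellation and produces a genuine polynomial expression in the $S_{i,M}$ and $\alpha_M$. Thus no further manipulation (such as inverting formal power series or working modulo higher-degree relations in $A^*(M)$) is needed to extract the integrality and the restricted variable dependence claimed in the statement.
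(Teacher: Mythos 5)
Your proof is correct and takes exactly the approach the paper intends: the corollary is an immediate reading of the product formula in Theorem~\ref{Chris tangent formula}, whose factors all lie in $\mathbb{Z}[\alpha_M, S_{1,M},\ldots,S_{r-1,M}]$, so the product does too.
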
 

\begin{remark}
In fact, for a rank $r$ matroid $M$, we have $$(1+S_{r-1, M})(1+\alpha-S_{1, M} - \cdots - S_{r-1, M}) = (1+\alpha - S_{1, M} - \cdots - S_{r-2, M}).$$ So $c(T_M)$ lies in $\mathbb{Z}[\alpha_M, S_{1, M}, \ldots, S_{r-2, M}]$, and we do not need $S_{r-1, M}$. 
\end{remark}

\subsection{Hirzebruch-Riemann-Roch formula} \label{subsec:HRR} 

Our strategy in this subsection is to use the valuative property of Euler characteristics to extend the classical Hirzebruch-Riemann-Roch formula from realizable matroids to arbitrary loopless matroids. 

The Todd class of a vector bundle $\mathcal{E}$ on $X$ is given by \[
\mathrm{td}(\mathcal{E})\;=\;\prod_i \frac{\alpha_i}{1 - e^{-\alpha_i}} 
\;\in\; A^*(X)\otimes\mathbb{Q},
\]
where \(\{\alpha_i\}\) are the Chern roots of \(\mathcal{E}\). The Todd class can be given explicitly as a formal power series in the Chern classes: $$\operatorname{td}(\mathcal{E}) = 1 + \frac{c_1(\mathcal{E})}{2} + \frac{c_1(\mathcal{E})^2 + c_2(\mathcal{E})}{12} + \cdots,$$ so the Todd class can be defined for a $K$-class.

For a smooth projective variety $X$ and any coherent sheaf $\mathcal{F}$, the Hirzebruch-Riemann-Roch formula implies $$\chi(\mathcal{F}) = \deg(\operatorname{ch}(\mathcal{F}) \cdot \operatorname{td}(T_X)),$$
where $T_X$ is the tangent bundle. Our goal is to relate this formula to Proposition \ref{prop: HRR} (Matroid Hirzebruch-Riemann-Roch). 

When $M$ is realizable by $L$, the Todd class in Proposition \ref{prop: HRR} then equals $\operatorname{td}(T_M)$. In this section, we will prove that this is true for arbitrary matroids. Our strategy is to use valuativity to extend from realizable to general matroids, and Proposition \ref{prop:valuative} is the key step.

\begin{proposition}\label{prop:valuative}

Let $X_E$ be the permutohedral variety. Fix an integer $r > 0$, a polynomial \(z \;\in\; \mathbb{Q}[x, y_1,y_2, \ldots, y_{r-1}] \), and a class $A \in A^*(X_E)$. For a rank $r$ matroid $M$ with ground set $E$, we set $$z_M = z(\alpha_M, S_{1, M}, \ldots, S_{r-1, M}) \in A^*(M).$$

Define
\[
\Phi_{z,A}: \{\text{rank } r \text{ Matroids on }E\} \;\longrightarrow\; \mathbb{Q},
\quad
M \;\longmapsto\; \deg_M\Bigl(i_M^*(A)\cdot z_M\Bigr).
\]

Then the map $\Phi_{z,A}$ is valuative for the set of rank $r$ matroids. 
\end{proposition}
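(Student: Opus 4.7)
The plan is to lift $\Phi_{z,A}$ to an intersection number on $X_E$ and reduce to known valuativity of tautological invariants.

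By linearity of the degree map in $z$, I reduce to the case $z = x^{a_0}y_1^{a_1}\cdots y_{r-1}^{a_{r-1}}$, so $z_M = \alpha_M^{a_0}\prod_j S_{j,M}^{a_j}$. Define the lift $\widetilde{S}_{j,M} := \sum_{F} x_F \in A^1(X_E)$, where $F$ ranges over rank-$(r-j)$ flats of $M$; then $i_M^*\widetilde{S}_{j,M} = S_{j,M}$. Setting $\widetilde{z}_M := \alpha^{a_0}\prod_j \widetilde{S}_{j,M}^{a_j}$ and using the identification $[X_M] = c_{n-r}(Q_M)$ of the Bergman class with the top Chern class of $Q_M$ (which holds because $Q_L$ is the normal bundle of $W_L\hookrightarrow X_E$ in the realizable case, extended by valuativity in \cite{BEST}) together with the projection formula, one obtains
\[
\Phi_{z,A}(M) \;=\; \deg_{X_E}\!\bigl(A \cdot \widetilde{z}_M \cdot c_{n-r}(Q_M)\bigr).
\]

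Next, expand $\widetilde{z}_M$ by distributing the monomial. The Stanley--Reisner relations on $A^*(X_E)$ force products $x_{F_1}\cdots x_{F_m}$ over incomparable $F_i$ to vanish, so one rewrites
\[
\Phi_{z,A}(M) \;=\; \sum_{\mathcal{F}} c_{\mathcal{F}}\,\mathbf{1}[\mathcal{F}\text{ is a chain of flats of } M \text{ with specified ranks}]\cdot\deg_{X_E}\!\Bigl(A\cdot \alpha^{a_0}\cdot\textstyle\prod_i x_{G_i}^{b_i}\cdot c_{n-r}(Q_M)\Bigr),
\]
where $\mathcal{F}=(G_1\subsetneq\cdots\subsetneq G_s)$ ranges over chains of subsets of $E$ equipped with rank labels $(k_i)$ and multiplicities $(b_i)$ determined by the exponents $a_j$, and the $c_{\mathcal{F}}$ are $M$-independent non-negative integers.

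For each fixed $\mathcal{F}$, the product $\mathbf{1}[\mathcal{F}\text{ is a flat chain with correct ranks}]\cdot \prod_i x_{G_i}^{b_i}\cdot c_{n-r}(Q_M) \in A^*(X_E)$ can be identified with a flagged tautological class built from the minors $M_{G_i}$ and $M^{G_i}$ of $M$ along the chain $\mathcal{F}$; valuativity of such flagged tautological classes is a consequence of the valuativity of $M\mapsto [Q_M]\in K(X_E)$ established in \cite{BEST}. Multiplying by the $M$-independent $A\cdot\alpha^{a_0}$ and taking degree preserves valuativity, and summing over $\mathcal{F}$ with fixed weights $c_{\mathcal{F}}$ then yields the claim. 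The principal obstacle is the last identification: although the indicator and the Bergman class are individually valuative, their product is not automatically so, and carrying it out requires a careful star-subdivision description of $\Sigma_M$ along $\mathcal{F}$ together with the pushforward--pullback bridge of Lemma \ref{lem:pushforward} relating $A^*(M)$ to $A^*(M_{G_i})\otimes A^*(M^{G_i})$.
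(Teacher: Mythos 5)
You have correctly identified the shape of the problem, but your proof has a genuine gap that you yourself flag and do not close. After expanding the monomial, you arrive at
\[
\Phi_{z,A}(M)\;=\;\sum_{\mathcal F}c_{\mathcal F}\cdot\mathbf 1\bigl[\mathcal F\text{ is a flat chain of }M\text{ with prescribed ranks}\bigr]\cdot\deg_{X_E}\Bigl(A\cdot\alpha^{a_0}\prod_i x_{G_i}^{b_i}\cdot c_{n-r}(Q_M)\Bigr),
\]
and each summand is a \emph{product} of two $M$-dependent quantities: the flag indicator (valuative by \cite[Theorem 6.2]{Valuative}) and the pairing against the Bergman class $c_{n-r}(Q_M)$ (valuative by \cite{BEST}). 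The product of two valuative functions is not valuative in general, and one cannot simply discard the indicator: if $G_i$ fails to be a flat the degree does vanish (since $i_M^*(x_{G_i})=0$), but if $G_i$ is a flat of the wrong rank the indicator is $0$ while the degree need not be. So the indicator genuinely matters, and your reduction stalls exactly where you say it does.

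The paper's proof sidesteps this by never separating the two factors. It works in $A^*(M)$ and uses the pullback/pushforward maps $\varphi_M^F,\psi_M^F$ (Lemmas~\ref{lem:pullback}, \ref{lem:pushforward}) to decompose $\deg_M\bigl(\beta^{d_0}x_{F_1}^{d_1}\cdots x_{F_k}^{d_k}\alpha^{d_{k+1}}\bigr)$ into a product of degrees $\deg_{M_i}(\alpha^{t_i}\beta^{s_i})$ over the minors along the chain, with an $M$-independent combinatorial coefficient. It then proves (Lemmas~\ref{lemma: counting} and~\ref{lemma: key valuative}) that each such degree is a $\mathbb Z$-linear combination of the counting functions $N_{\mathcal F_i,I_i}$, whose products over a chain assemble into a \emph{single} flag-counting function $N_{\mathcal F,I}$. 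Thus the whole quantity is directly written as a fixed $\mathbb Z$-linear combination of the $\widehat N_{\mathcal F,I}$, which are valuative by Corollary~\ref{Cor NFI valuation}, with no product-of-valuatives step anywhere. Your hinted fix (``careful star-subdivision \ldots together with the pushforward--pullback bridge'') is essentially the correct direction, but as stated it is a sketch of the paper's argument rather than an alternative to it; to make your route rigorous you would have to prove something equivalent to Lemma~\ref{lemma: key valuative}, i.e., express $\deg_{M_i}(\alpha^t\beta^s)$ as an explicit combination of flag counts, which is the technical heart of the proof.
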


The proof of Proposition \ref{prop:valuative} will be completed in Subsection \ref{proof of prop val}.

\subsection{Intersection numbers and flag counting functions}

To prove Proposition \ref{prop:valuative}, we must show that intersection numbers in the Chow ring behave valuatively. Our strategy in this subsection is to express products of the divisors $\alpha, \beta$, and $x_F$ entirely in terms of combinatorial flag counting functions. Because these counting functions are known to be valuative, this will establish the valuativity of the intersection numbers. Recall that in the Chow ring $A^*(M)$, the classes $\alpha$ and $\beta$ can be represented as sums of generators. For any fixed element $i \in E$, these classes satisfy the following identities:
\begin{equation} \label{eq:alpha-beta-def}
\alpha = \sum_{F \ni i} x_F, \quad \beta = \sum_{F \not\ni i} x_F.
\end{equation}

\begin{proposition} \label{prop:multiplication}
Let $F$ be a flat of rank $d$.
\begin{enumerate}
    \item For any $i \notin F$, we have $x_F \cdot \alpha = \sum_{G \supsetneq F, i \in G} x_F x_G$.
    \item For any $i \in F$, we have $x_F \cdot \beta = \sum_{G \subsetneq F, i \notin G} x_F x_G$.
\end{enumerate}
\end{proposition}

\begin{proof}
For (1), substituting the definition of $\alpha$ from \eqref{eq:alpha-beta-def} using an element $i \notin F$, we have $x_F \alpha = x_F (\sum_{G \ni i} x_G)$. The product $x_F x_G$ is non-zero only if $F \subseteq G$ or $G \subseteq F$. Since $i \in G$ and $i \notin F$, the case $G \subseteq F$ is impossible. Moreover, $G$ cannot be $F$ itself. Thus, the only surviving terms are those where $G \supsetneq F$. The proof for (2) is symmetric, choosing $i \in F$ and observing that $G \subsetneq F$ are the only comparable flats not containing $i$.
\end{proof}

\begin{lemma} \label{lemma:vanish}
Suppose $f = \prod_{F\in I} x_F$ is a degree $d$ monomial, where $I$ is a multiset of flats. If all flats in $I$ are strictly between two flats $F_1 \subset F_2$ with $\operatorname{rk}(F_1) = d_1$ and $\operatorname{rk}(F_2) = d_2$, then $x_{F_1}fx_{F_2} = 0$ if $d_2-d_1\leq d$. (By convention we allow $F_1 = \varnothing$ or $F_2 = E$ and set $x_{\varnothing} = x_{E} = 1$.)
\end{lemma}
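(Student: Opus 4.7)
The plan is to reduce the vanishing to a degree argument in the Chow ring of the ``interval matroid'' $M[F_1,F_2] := (M_{F_1})^{F_2 \setminus F_1}$, which has rank $d_2 - d_1$; since $A^*$ of a rank-$s$ matroid vanishes in degrees $\geq s$, the hypothesis $d \geq d_2 - d_1$ immediately kills the relevant monomial there. The reduction is carried out by two applications of the pullback/pushforward machinery (Lemmas~\ref{lem:pullback} and~\ref{lem:pushforward}).

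First, when $F_1$ is a proper nonempty flat, I would apply $\varphi^{F_1}_M$ to $f \cdot x_{F_2}$. Every flat appearing, namely each $G \in I$ and (if $F_2 \neq E$) also $F_2$, strictly contains $F_1$, so Lemma~\ref{lem:pullback} yields
\[
\varphi^{F_1}_M(f \cdot x_{F_2}) \;=\; (f' \cdot x_{F_2 \setminus F_1}) \otimes 1,
\]
where $f' := \prod_{G \in I} x_{G \setminus F_1} \in A^*(M_{F_1})$ (using the convention $x_{E \setminus F_1} = 1$ for the edge case $F_2 = E$). By Lemma~\ref{lem:pushforward}, $\psi^{F_1}_M$ sends this tensor back to $x_{F_1} \cdot f \cdot x_{F_2}$, so it suffices to show $f' \cdot x_{F_2 \setminus F_1} = 0$ in $A^*(M_{F_1})$. (If $F_1 = \varnothing$, this first step is simply skipped, with $f' = f$ and $M_{F_1} = M$.)

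Next, inside $A^*(M_{F_1})$, I would apply $\psi^{F_2 \setminus F_1}$ when $F_2 \neq E$. All flats in $f'$ are proper nonempty flats of $(M_{F_1})^{F_2 \setminus F_1}$, so the same monomial defines an element $f'' \in A^d((M_{F_1})^{F_2 \setminus F_1})$, and Lemma~\ref{lem:pushforward} gives $\psi^{F_2 \setminus F_1}(1 \otimes f'') = x_{F_2 \setminus F_1} \cdot f'$. It therefore suffices to check $f'' = 0$, which is the promised degree-vanishing: $(M_{F_1})^{F_2 \setminus F_1}$ has rank $d_2 - d_1$ and $\deg f'' = d \geq d_2 - d_1$. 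When $F_2 = E$, this step is skipped and one concludes directly from $f' \in A^d(M_{F_1})$ with $d \geq r - d_1 = \operatorname{rk}(M_{F_1})$. The main (purely cosmetic) difficulty is tracking the four combinations of $F_1 \in \{\varnothing, \text{proper}\}$ and $F_2 \in \{\text{proper}, E\}$ under the boundary conventions $x_\varnothing = x_E = 1$; once these are handled, the whole argument consists of two pullback/pushforward applications and a rank-degree comparison.
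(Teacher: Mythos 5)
Your proof is correct and follows essentially the same route as the paper's own (very terse) argument: reduce to the rank-$(d_2-d_1)$ minor bounded by $F_1$ and $F_2$ via the pullback/pushforward maps of Lemmas~\ref{lem:pullback} and~\ref{lem:pushforward}, then invoke the vanishing of the Chow ring in degrees at or above the rank. The only difference is that you spell out the two-stage reduction and the edge cases explicitly, whereas the paper compresses this into a single sentence.
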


\begin{proof}
Consider the matroid minor bounded by $F_1$ and $F_2$ (that is, contracting on $F_1$ and restricting to $F_2$). It is a rank $d_2 - d_1$ matroid, and everything with degree $\geq d_2 - d_1$ will vanish in the Chow ring. By the pullback map (Lemma \ref{lem:pullback}) of $F_1$ and $F_2$ we get $f = 0$.
\end{proof}

\begin{lemma}[Basic properties of $\alpha$ and $\beta$]
If $F$ is a flat of rank $d$, then $x_F\alpha^{r-d} = x_F \beta^{d} = 0$. Furthermore, $\deg(\alpha^{r-1}) = 1$ and $\deg(\beta^{r-1}) > 0$.
\end{lemma}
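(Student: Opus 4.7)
The two vanishing claims follow quickly from Lemma~\ref{lemma:vanish}. For $x_F\alpha^{r-d}=0$, pick an element $i\in E\setminus F$ and expand $\alpha=\sum_{G\ni i}x_G$; since $i\notin F$, in any surviving summand of $\alpha^{r-d}$ every flat $G$ must strictly contain $F$. Lemma~\ref{lemma:vanish} applied with $F_1=F$ (rank $d$), $F_2=E$ (taking $x_E=1$), and monomial degree $r-d$ gives $d_2-d_1=r-d$, matching the degree, so the product with $x_F$ vanishes. For $x_F\beta^d=0$, pick $i\in F$, write $\beta=\sum_{G\not\ni i}x_G$, observe that each surviving $G$ satisfies $G\subsetneq F$, and apply Lemma~\ref{lemma:vanish} symmetrically with $F_1=\varnothing$ and $F_2=F$.

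For $\deg(\alpha^{r-1})=1$ I induct on the rank $r$. The base case $r=1$ gives $\deg(1)=1$. For the step, fix any $i\in E$ and expand
\[
\deg(\alpha^{r-1})=\sum_{F\ni i}\deg_M(x_F\cdot\alpha^{r-2}),
\]
where $F$ ranges over proper nonempty flats containing $i$. Using Lemma~\ref{lem:pushforward}, each summand equals $\deg_F(\varphi^F_M(\alpha^{r-2}))=\deg_F(\alpha_{M_F}^{r-2}\otimes 1)$. This class has bidegree $(r-2,0)$, but the top bidegree of $A^*(M_F)\otimes A^*(M^F)$ is $(r-\operatorname{rk}(F)-1,\operatorname{rk}(F)-1)$, so the summand vanishes unless $\operatorname{rk}(F)=1$. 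For such $F$, the contraction $M_F$ has rank $r-1$ and by induction $\deg_{M_F}(\alpha_{M_F}^{r-2})=1$, while $\deg_{M^F}(1)=1$. Since $M$ is loopless, $\operatorname{cl}(\{i\})$ is the unique rank-$1$ flat containing $i$, so the sum collapses to a single contribution of $1$.

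For $\deg(\beta^{r-1})>0$ the same blueprint applies, now with $\varphi^F_M(\beta^{r-2})=1\otimes\beta_{M^F}^{r-2}$: by the analogous bidegree check only coatoms $F\not\ni i$ contribute, each contributing $\deg_{M^F}(\beta_{M^F}^{r-2})>0$ by the inductive hypothesis applied to the rank-$(r-1)$ restriction $M^F$. The subtle point is guaranteeing the sum is nonempty. If $i$ is a coloop then $E\setminus\{i\}$ already has rank $r-1$ and is a flat, hence a coatom avoiding $i$; if $i$ is not a coloop, then $E\setminus\{i\}$ has rank $r$ and contains a basis $B$, and a short closure/basis-exchange argument on $(r-1)$-subsets of $B$ produces a hyperplane that misses $i$. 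Either way at least one positive summand survives, giving $\deg(\beta^{r-1})>0$. The main friction throughout is bookkeeping bidegrees under $\varphi^F_M$ and verifying this last matroid-theoretic existence claim; neither step is deep, but both deserve explicit treatment.
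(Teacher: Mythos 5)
Your proof is correct and tracks the paper's (very terse) argument closely: the vanishing statements are exactly applications of Lemma~\ref{lemma:vanish} as you say, and your inductive use of $\varphi_M^F$ and $\psi_M^F$ to peel off one flat at a time is the formalization of the paper's informal ``append flats one by one; at each step there is a unique choice of next flat through $i$.'' You also supply the $\deg(\beta^{r-1})>0$ case, which the paper omits. One small streamlining in that step: rather than the case split on whether $i$ is a coloop, observe that since $M$ is loopless there is a basis $B$ with $i\in B$, and $\operatorname{cl}(B\setminus\{i\})$ is then a hyperplane not containing $i$ (if it contained $i$, $B$ would be dependent); this handles both cases at once and avoids the vaguely stated ``closure/basis-exchange argument.''
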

\begin{proof}
By Proposition \ref{prop:multiplication}, multiplying $x_F$ by $\alpha$ (resp. $\beta$) produces a sum of products $x_F x_G$ where $\operatorname{rk}(G) > d$ (resp. $\operatorname{rk}(G) < d$). Iterating this $r-d$ (resp. $d$) times forces the resulting flags to reach rank $r$ (resp. $0$), and the products vanish.

For the degree, Proposition \ref{prop:multiplication} implies $\alpha^{r-1}$ is a sum over maximal flags. Since for any flat $F$ and element $i \notin F$, there is a unique flat $G$ of rank $\operatorname{rk}(F)+1$ containing $F \cup \{i\}$, the iteration yields a unique maximal chain, hence $\deg(\alpha^{r-1}) = 1$. In contrast, for a flat $F$ and an element $i \in F$, there exists at least one flat $G$ of rank $\operatorname{rk}(F)-1$ covered by $F$ such that $i \notin G$, hence $\deg(\beta^{r-1}) > 0$.
\end{proof}

When $M$ is realizable, we can interpret this combinatorial result geometrically. $\alpha$ is the pullback of $\mathcal{O}(1)$, as a result, $\alpha^{r-d}$ can avoid everything of codimension $\geq d$, and the product $x_F\alpha^{r-d} = 0$ when $\operatorname{rk}(F) \geq d$. Moreover, $\deg(\alpha^{r-1}) = 1$. The geometric meaning of $\beta$ is not that straightforward, but it behaves in many ways like the dual of $\alpha$.

Note that $\beta_M = S_{1,M} + \cdots + S_{r-1, M} - \alpha_M \in \mathbb{Q}[\alpha_M, S_{1,M}, \ldots, S_{r-1,M}]$, so we can handle $\beta_M$ in our argument. For an integer $d$, denote $[d] = \{1, 2, \ldots, d\}$

\begin{definition} \label{definition of mathcal H}
For a rank $r$ matroid $M$ on the ground set $E$, denote $\mathcal{H}$ as the set of flags of flats. For a subset $I =\{i_1, \ldots, i_k\}\subseteq [r-1]$ write $\mathcal{H}_I$ for the set of flags of flats $\mathcal{G} = G_1\subsetneq G_2\subsetneq \cdots \subsetneq G_k$ with $\operatorname{rk}(G_t) = i_t$ for all $t$.

Let $\mathcal{F} = F_1\subsetneq \cdots \subsetneq F_l$ be a flag of flats, and let $I \subset \{1, \ldots, r -1\}$. We define the number $$N_{\mathcal{F}, I} = \# \{\mathcal{G} \in \mathcal{H}_I \text{ and }\mathcal{G} \sqcup \mathcal{F} \in \mathcal{H} \},$$ where the notation $\mathcal{G} \sqcup \mathcal{F} \in \mathcal{H}$ also implies $\mathcal{G}$ and $\mathcal{F}$ are disjoint. 

For an element $s \in E$ in the ground set, we define $$N_{\mathcal{F}, I, s} = \# \{\mathcal{G} \in \mathcal{H}_I \mid \mathcal{G} = (G_1 \subsetneq \cdots \subsetneq G_k), s \notin G_k, \text{ and } \mathcal{G} \sqcup \mathcal{F} \in \mathcal{H} \}.$$
\end{definition}

\begin{lemma} \label{lemma: counting}
Let $M$ be a matroid of rank $r$ on a ground set $E$, and let $d$ be an integer such that $0 < d < r$. Suppose that $\mathcal{F} = \varnothing$ is the empty flag. Then for any element $s \in E$, we have
$$
\sum_{\substack{I \subseteq [d] \\ d \in I}} (-1)^{|I|}N_{\mathcal{F}, I, s} = \sum_{I \subseteq [d]} (-1)^{|I|}N_{\mathcal{F}, I}.
$$
\end{lemma}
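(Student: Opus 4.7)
The plan is to reduce the identity to a single alternating sum over chains in a specific subposet of flats, and kill it by a sign-reversing involution.

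\textbf{Setup.} Subtract \(\mathrm{LHS}\) from \(\mathrm{RHS}\) and split \(\sum_{I \subseteq [d]}\) into terms with \(I \subseteq [d-1]\) and with \(d \in I\). For \(d \in I\) (so \(\max I = d\)), the difference \(N_{\mathcal F, I} - N_{\mathcal F, I, s}\) counts flags of rank set \(I\) whose top (rank-\(d\)) flat contains \(s\). Collecting the two pieces yields
\[
\mathrm{RHS} - \mathrm{LHS} = \sum_{\mathcal G \text{ flag in } S}(-1)^{|\mathcal G|},
\]
where \(S\) is the subposet of proper nonempty flats consisting of all flats of rank \(\leq d-1\) together with the rank-\(d\) flats containing \(s\), and the empty flag is included with weight \(+1\). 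Every chain in \(S\) either has no rank-\(d\) element (giving the first piece, ranks in \([d-1]\)) or has a rank-\(d\) top forced to contain \(s\) (giving the second piece), so the two descriptions match exactly.

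\textbf{Involution.} Let \(T := \{F \in S : s \in F\}\); by looplessness, \(T\) has minimum \(\operatorname{cl}(\{s\})\). For a chain \(\mathcal G\) in \(S\) having some element not containing \(s\), let \(G_\ast\) be the \emph{maximum} such element and set \(\phi(G_\ast) := \operatorname{cl}(G_\ast \cup \{s\})\). Because rank-\(d\) elements of \(S\) contain \(s\), we have \(\operatorname{rk}(G_\ast) \leq d-1\), so \(\phi(G_\ast)\) has rank \(\operatorname{rk}(G_\ast)+1 \leq d\) and lies in \(S\); and \(\phi(G_\ast)\) is comparable to every element of \(\mathcal G\), because elements \(\leq G_\ast\) are contained in \(\phi(G_\ast)\) while elements strictly above \(G_\ast\) contain both \(G_\ast\) and \(s\), hence \(\phi(G_\ast)\). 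Toggling \(\phi(G_\ast)\) in \(\mathcal G\) therefore produces another chain in \(S\); since \(\phi(G_\ast)\) contains \(s\), the element \(G_\ast\) is unchanged by the toggle, so the operation is a genuine involution that flips the sign \((-1)^{|\mathcal G|}\). For chains \(\mathcal G \subseteq T\) (including the empty chain), instead toggle the presence of \(\operatorname{cl}(\{s\})\); as the minimum of \(T\), it is comparable to every element of \(T\), giving a fixed-point-free, sign-reversing involution on chains entirely in \(T\).

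Combining the two toggles yields a fixed-point-free, sign-reversing involution on the set of all chains in \(S\), so the alternating sum vanishes and the identity follows. The main technical point is the first case of the involution: one must verify that \(\phi(G_\ast)\) lies in \(S\) (this uses the bound \(\operatorname{rk}(G_\ast) \leq d-1\)) and is comparable to every element of \(\mathcal G\) (this uses the maximality of \(G_\ast\) among non-\(s\)-containing elements, which forces everything above \(G_\ast\) to contain \(s\)). The rest is bookkeeping.
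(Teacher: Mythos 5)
Your proof is correct, and it takes a genuinely different route from the paper. The paper proves the lemma by induction on $d$: it introduces auxiliary counts $N_{\mathcal F, I, j}$ (recording where $s$ first enters the flag), performs a telescoping cancellation, and reduces to the case $d-1$. You instead rewrite $\mathrm{RHS}-\mathrm{LHS}$ as the alternating sum $\sum_{\mathcal G}(-1)^{|\mathcal G|}$ over all chains in the subposet $S$ of proper flats of rank $\le d-1$ together with rank-$d$ flats containing $s$, and then kill that sum by an explicit sign-reversing, fixed-point-free involution: toggle $\operatorname{cl}(G_\ast\cup\{s\})$ where $G_\ast$ is the maximal non-$s$-containing element, or toggle $\operatorname{cl}(\{s\})$ if there is none. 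The key verifications -- that $\phi(G_\ast)$ has rank $\le d$ and contains $s$ so lies in $S$, is comparable to everything in the chain by maximality of $G_\ast$, and leaves $G_\ast$ unchanged since it contains $s$ -- are all present and correct. Conceptually, your argument shows that the order complex of $S$ has reduced Euler characteristic zero (indeed your involution amounts to a discrete Morse matching exhibiting the complex as collapsible to a point), which is a cleaner and more structural explanation than the telescoping recursion. The paper's induction has the advantage of producing exactly the intermediate quantities it reuses in the subsequent lemma on $\deg(\alpha^{r-1-d}\beta^d)$, but as a standalone proof of this lemma your involution is shorter and requires no induction.
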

\begin{proof}

We proceed by induction on $d$. When $d = 1$, there is exactly one flat of rank 1 containing $s$. This corresponds to the empty set $I = \varnothing$ in the right-hand-side, so the formula is correct. 

To compute the left-hand-side, we first sum over all $G_k$, then subtract the case when $G_k$ contains $s$. For the part summing over all possible $G_k$, the value is $\sum_{I \subseteq [d], d \in I} (-1)^{|I|}N_{\mathcal{F}, I}$.

For a set $I = \{i_1,\dots,i_k\} \subseteq \{1,\dots,r - 1\}$ with $i_k = d$, and a number $j \leq k$, we define the number
$$N_{\mathcal{F}, I, j} = \# \{\mathcal{H}_{I} \ni \mathcal{G} = G_1\subsetneq G_2 \subsetneq \cdots \subsetneq G_k,  s \in G_{j+1}, s\notin G_j \text{, and $\mathcal{G} \sqcup \mathcal{F} \in \mathcal{H}$} \}.$$

In particular, it is allowed to have $j=0$ and $G_0 = \varnothing$.

We have $$\sum_{I \subseteq [d], d \in I} (-1)^{|I|}N_{\mathcal{F}, I, s} = \sum_{I \subseteq [d], d \in I} (-1)^{|I|}N_{\mathcal{F}, I} - \sum_{\substack{I \subseteq [d], d \in I \\ 0 \leq j < |I|}} (-1)^{|I|}N_{\mathcal{F}, I, j}.$$

Denote $I = \{i_1, \dots, i_k\}$. Note that $s\notin G_j \text{ and } s\in G_{j + 1}$ is equivalent to $G_{j+1} \supseteq (G_j \cup \{s\})$, and there is a unique flat of rank $i_j + 1$ containing $G_j \cup \{s\}$. Therefore, if $i_{j+1} > i_j +1$, $$N_{\mathcal{F}, \{i_1, \ldots, i_k\}, j} = N_{\mathcal{F}, \{i_1, \ldots, i_j, i_j + 1, i_{j + 1}, \ldots, i_k\}, j}.$$

All terms will cancel out in the summation of $N_{\mathcal{F},I, j}$ except for the case $i_{k-1} +1 = i_k$ and $j=k -1$. Therefore, $$\sum_{\substack{I \subseteq [d], d \in I \\ 0 \leq j < |I|}} (-1)^{|I|}N_{\mathcal{F}, I, j} = \sum_{\substack{I = \{i_1, \ldots, i_k\}\subseteq [d]\\  d-1, d \in I, \text{ and } j = k - 1}} (-1)^{|I|}N_{\mathcal{F}, I, j}, $$ which is equivalent to $\sum_{I\subseteq [d-1], d-1 \in I} (-1)^{|I|+1}N_{\mathcal{F}, I, s}$. 

By induction, this is $-\sum_{I \subseteq [d-1]} (-1)^{|I|}N_{\mathcal{F}, I}$, and we are done.
\end{proof}

\begin{lemma} \label{lemma: key valuative}
Let $M$ be a rank $r$ matroid on the ground set $E$, and let $0 \leq d < r$. We have
\[
\deg_M(\alpha^{r-1-d}\beta^{d}) = (-1)^d\sum_{I \subseteq [d]} (-1)^{|I|} N_{\varnothing, I},
\]
where $\varnothing$ is the empty flag.
\end{lemma}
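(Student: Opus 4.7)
The plan is to induct on $d$, using the pushforward formula (Lemma~\ref{lem:pushforward}) to reduce the rank-$r$ computation to a rank-$d$ computation on the restrictions $M^F$, and then applying Lemma~\ref{lemma: counting} to convert the $s$-decorated count back to the undecorated $N_{\varnothing,\,\cdot}$. The base case $d=0$ is $\deg_M(\alpha^{r-1})=1 = N_{\varnothing,\varnothing}$, which is the elementary property of $\alpha$ recorded just before the lemma.

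For the inductive step, fix any element $s\in E$ and expand one of the $\beta$'s as $\beta_M=\sum_{F\not\ni s}x_F$, so that
\[
\deg_M(\alpha^{r-1-d}\beta^d) \;=\; \sum_{F\not\ni s}\deg_M\bigl(\alpha^{r-1-d}\beta^{d-1}\, x_F\bigr).
\]
Since $\alpha^{r-1-d}\beta^{d-1}\in A^{r-2}(M)$, Lemma~\ref{lem:pushforward} together with the identities $\varphi_M^F(\alpha_M)=\alpha_{M_F}\otimes 1$ and $\varphi_M^F(\beta_M)=1\otimes\beta_{M^F}$ from Lemma~\ref{lem:pullback} gives
\[
\deg_M\bigl(\alpha^{r-1-d}\beta^{d-1}\, x_F\bigr) \;=\; \deg_{M_F}\bigl(\alpha_{M_F}^{r-1-d}\bigr)\cdot \deg_{M^F}\bigl(\beta_{M^F}^{d-1}\bigr).
\]
The first factor vanishes unless $\operatorname{rk}(F)=d$, in which case it equals $1$ (the base case applied to $M_F$), while the inductive hypothesis applied to $M^F$ (which has rank $d$, with smaller parameter $d-1$) evaluates the second as $(-1)^{d-1}\sum_{I\subseteq[d-1]}(-1)^{|I|}N^{M^F}_{\varnothing,I}$.

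Because $F$ is itself a flat of $M$, the flats of $M^F$ are exactly the flats of $M$ contained in $F$, so summing over rank-$d$ flats $F$ with $s\notin F$ yields $\sum_F N^{M^F}_{\varnothing,I} = N_{\varnothing,\,I\cup\{d\},\,s}$. Reindexing $J=I\cup\{d\}$ (so $(-1)^{|I|}=-(-1)^{|J|}$) then produces
\[
\deg_M(\alpha^{r-1-d}\beta^d) \;=\; (-1)^d\sum_{\substack{J\subseteq[d]\\ d\in J}}(-1)^{|J|}\,N_{\varnothing,J,s},
\]
and a final application of Lemma~\ref{lemma: counting} rewrites this as $(-1)^d\sum_{J\subseteq[d]}(-1)^{|J|}N_{\varnothing,J}$, closing the induction.

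The key observations making the argument work are (i) that $\varphi_M^F$ separates $\alpha_M$ and $\beta_M$ cleanly into factors on $M_F$ and $M^F$, which is what lets the degree factor as a product and forces $\operatorname{rk}(F)=d$, and (ii) that when $F$ is a flat, flats of the restriction $M^F$ coincide with flats of $M$ below $F$, which is what identifies the sum of $N^{M^F}_{\varnothing,I}$ with $N_{\varnothing,\,I\cup\{d\},\,s}$. Once these structural facts are in place, the remaining sign bookkeeping is routine; the only conceptual step is recognizing that Lemma~\ref{lemma: counting} is exactly the identity needed to remove the $s$-dependence at the end.
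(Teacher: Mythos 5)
Your proof is correct and follows essentially the same route as the paper's: expand one $\beta$ over flats not containing $s$, apply the pullback/pushforward formula with respect to $F$ to factor the degree and force $\operatorname{rk}(F)=d$, invoke the inductive hypothesis on the rank-$d$ restriction $M^F$, and close with Lemma~\ref{lemma: counting}. You also spell out the re-indexing step and the identification $\sum_F N^{M^F}_{\varnothing,I} = N_{\varnothing,I\cup\{d\},s}$ more explicitly than the paper, and you correctly write $\deg_{M^F}(\beta^{d-1})$ where the paper's proof has the typo $\deg_{M_F}(\beta^{d-1})$ (the factor lands on the restriction, not the contraction).
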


\begin{proof}
We prove this by induction on $d$. This is true for $d = 0$ as both sides are 1. 

Expand one of the $\beta$'s. Choose an $s \in E$, the terms contributing to the degree should be $$\sum_{\substack{s \notin F, \text{ and} \\ \text{$F$ has rank $d$}}} x_F \beta^{d-1}\alpha^{r-d-1}.$$

By the pullback map and pushforward map in \ref{lem:pullback} and \ref{lem:pushforward} with respect to the flat $F$, $$\deg_M(x_F\beta^{d-1}\alpha^{r-d-1}) = \deg_F\big(\varphi^F_M(\beta^{d-1}\alpha^{r-d-1})\big) = \deg_{M_F}(\beta^{d-1}).$$

Sum over all rank $d$ flats that do not contain $s$. By induction and Lemma \ref{lemma: counting}, the summation is $$(-1)(-1)^{d-1}\sum_{I \subseteq [d], d \in I} (-1)^{|I|}N_{\varnothing, I, s} = (-1)^d\sum_{I \subseteq [d]} (-1)^{|I|}N_{\varnothing, I}.$$
\end{proof}

\begin{theorem}[Flag‐valuations]\label{thm:flag-valuation}\cite[Theorem 6.2]{Valuative}
Let $F_1 \;\subsetneq\;\cdots\;\subsetneq\; F_k$ be any fixed flag of subsets of $E$.  Define
\[
\widehat{\Phi}_{F_1,\dots,F_k}\colon \{\text{Matroids on } E\} \;\longrightarrow\; \mathbb{Z}
\]
by
\begin{equation}\label{eq:flag-val-basic}
\widehat{\Phi}_{F_1,\dots,F_k}(M)
=
\begin{cases}
1, & F_i \text{ is a flat for all }1\le i\le k,\\[6pt]
0, & \text{otherwise}.
\end{cases}
\end{equation}
Then $\widehat{\Phi}_{F_1,\dots,F_k}$ is valuative.

Moreover, for any rank‐vector $(r_1,\dots,r_k)\in\mathbb{Z}^{\,k}$, define
\[
\widehat{\Phi}_{F_1,\dots,F_k}^{\,r_1,\dots,r_k}\colon \{\text{Matroids on } E\} \;\longrightarrow\; \mathbb{Z}
\]
by
\begin{equation}\label{eq:flag-val-rank}
\widehat{\Phi}_{F_1,\dots,F_k}^{\,r_1,\dots,r_k}(M)
=
\begin{cases}
1, 
& 
\begin{gathered}
F_i \text{ is a flat and } \mathrm{rk}(F_i)=r_i\\
\text{for all }1\le i\le k,
\end{gathered}
\\[10pt]
0, & \text{otherwise}.
\end{cases}
\end{equation}
Then $\widehat{\Phi}_{F_1,\dots,F_k}^{\,r_1,\dots,r_k}$ is also valuative. 
\end{theorem}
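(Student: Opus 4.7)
The unranked claim follows from the ranked one by summing $\widehat{\Phi}_{F_1,\ldots,F_k}=\sum_{(r_1,\ldots,r_k)}\widehat{\Phi}_{F_1,\ldots,F_k}^{\,r_1,\ldots,r_k}$ over the finite set $\{0\le r_i\le |F_i|\}$, so I focus on the ranked version.

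My plan is a face-map reduction. Set $F_0=\varnothing$, $F_{k+1}=E$ and pick a linear functional $w\colon\mathbb{R}^E\to\mathbb{R}$ that is constant on each slab $F_i\setminus F_{i-1}$ with strictly decreasing values along the chain. The $w$-maximizing face $P_M^w$ of the matroid polytope $P_M$ is itself the matroid polytope of the direct-sum minor
\[
N_w(M)\;:=\;\bigoplus_{i=1}^{k+1}(M|F_i)/F_{i-1}.
\]
A short combinatorial check shows that $(F_1,\ldots,F_k)$ is a flag of flats of $M$ with $\operatorname{rk}_M(F_i)=r_i$ if and only if each summand $(M|F_i)/F_{i-1}$ is loopless on $F_i\setminus F_{i-1}$ and has rank $r_i-r_{i-1}$ (with $r_0=0$). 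The nontrivial direction uses the observation that if $F_{i+1}$ is a flat of $M$ and $F_i$ is a flat of the restriction $M|F_{i+1}$, then $F_i$ is already a flat of $M$, which one proves by downward induction on $i$.

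The argument then rests on two valuativity steps. First, the face map $P\mapsto P^w$ respects $\mathbb{Z}$-linear relations of polytope indicators: from the pointwise identity
\[
1_{P^w}(x)\;=\;1_P(x)\;-\;\lim_{\varepsilon\to 0^+}1_P(x+\varepsilon w),
\]
verified case-by-case on $x\in P^w$, $x\in P\setminus P^w$, and $x\notin P$, any relation $\sum a_j 1_{P_{M_j}}=0$ yields $\sum a_j 1_{P_{M_j}^w}=0$. Second, since $P_{N_w(M)}$ is a Cartesian product of polytopes in orthogonal coordinate subspaces $\mathbb{R}^{F_i\setminus F_{i-1}}$, the valuative structure decouples across the $k+1$ factors. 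Combining these reduces the theorem to the single-ground-set statement that "$N$ is loopless of rank $s$" is valuative on matroids on a fixed ground set.

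The main obstacle is this last reduction. The rank part is immediate, since $\operatorname{rk}(N)$ is the constant value of $\sum x_i$ on $P_N$, and the indicator of a specific rank is then valuative. Looplessness, however, is a "vanishing on coordinate hyperplanes" condition that does not factor through a naive linear operation on indicator functions. I would handle it by inclusion–exclusion over possible loop sets, combined with further face-map arguments restricting to the face $\{x_i=0\}$ for each candidate loop; this ultimately expresses $1[N\text{ loopless}]$ as a $\mathbb{Z}$-linear combination of face indicators, at which point valuativity follows from the first step.
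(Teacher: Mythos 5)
The paper does not prove Theorem~\ref{thm:flag-valuation}; it is imported verbatim from \cite[Theorem 6.2]{Valuative}, so there is no internal proof to compare against. I therefore assess your argument on its own terms.

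The face-map reduction is a sensible strategy and the combinatorial translation you carry out is correct: the $w$-maximal face of $P_M$ for a slab-constant, strictly decreasing $w$ is the matroid polytope of $\bigoplus_i (M|F_i)/F_{i-1}$, and the downward-induction lemma (``$F_{i+1}$ a flat of $M$ and $F_i$ a flat of $M|F_{i+1}$ imply $F_i$ a flat of $M$'') is exactly right. However, the proof has a genuine error in the step you describe as ``verified case-by-case.'' The pointwise identity
\[
1_{P^w}(x)\;=\;1_P(x)\;-\;\lim_{\varepsilon\to 0^+}1_P(x+\varepsilon w)
\]
is false: it fails at boundary points $x\in P\setminus P^w$ at which $w$ points outward. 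Concretely, take $M=U_{3,4}$, $F_1=\{1,2\}$, and $w=(1,1,0,0)$ (project to $\bar w=(\tfrac12,\tfrac12,-\tfrac12,-\tfrac12)$ so that $x+\varepsilon\bar w$ stays in the hyperplane $\sum x_i=3$). Then $P^w$ is the edge joining $(1,1,1,0)$ and $(1,1,0,1)$, yet at the vertex $x=(1,0,1,1)\in P\setminus P^w$ we have $x+\varepsilon\bar w\notin P$ for all $\varepsilon>0$ (first coordinate exceeds $1$), so the right-hand side equals $1$ while the left-hand side equals $0$. Whether a face map respects indicator-function relations is a genuinely global statement about the polytope, not a local one at $x$; the fact you want (face maps send $\mathbb Z$-linear relations of matroid polytope indicators to $\mathbb Z$-linear relations of their faces) is true, but it is a theorem in its own right (it appears, e.g., in Derksen--Fink and in Ardila--Fink--Rincón) and cannot be deduced from this pointwise recipe.

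Two further steps are left at the level of sketch. The ``decoupling'' across the $k+1$ slab factors needs an actual argument: a $\mathbb Z$-relation $\sum a_j 1_{P_{N_w(M_j)}}=0$ among product polytopes does not automatically decompose into independent relations for the factors, and a product of valuative functions is not in general valuative, so some care (or a correct citation) is required. Finally, your reduction of looplessness is essentially the inclusion--exclusion $1[N\text{ loopless}]=\sum_{S\subseteq E}(-1)^{|S|}1[\operatorname{rk}_N(S)=0]$, which works, but it leans on the valuativity of $1[\operatorname{rk}_N(S)=s]$ for a single subset; that fact is precisely the key nontrivial ingredient one tends to cite from Derksen--Fink, so the reduction ends up resting on a lemma of at least the same depth as the target theorem. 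The proposal needs the face-map lemma proved or cited correctly and the latter two steps fleshed out before it stands.
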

\begin{corollary} \label{Cor NFI valuation}
Let $\mathcal{F} = F_1 \;\subsetneq\;\cdots\;\subsetneq\; F_k$ be any fixed flag of subsets of $E$ and $I \subseteq \{1, \ldots, |E| - 1\}$

Define 
\[
\widehat{N}_{\mathcal{F}, I}\colon \{\text{Matroids on } E\} \;\longrightarrow\; \mathbb{Z}
\]
by
\begin{equation}
\widehat{N}_{\mathcal{F}, I}(M)
=
\begin{cases}
N_{\mathcal{F}, I}, 
& 
\mathcal{F} \text{ is a flag of flats in } M
\\
0, & \text{otherwise}.
\end{cases}
\end{equation}
Then $\widehat{N}_{\mathcal{F}, I}$ is valuative.
\end{corollary}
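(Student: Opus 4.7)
The plan is to exhibit $\widehat{N}_{\mathcal{F},I}$ as a finite $\mathbb{Z}$-linear combination of the flag valuations $\widehat{\Phi}_{\bullet}^{\bullet}$ from Theorem~\ref{thm:flag-valuation}, and then use the fact that a finite sum of valuations is again a valuation.

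Write $I=\{i_1<\cdots<i_m\}$ and $\mathcal{F}=(F_1\subsetneq\cdots\subsetneq F_k)$. I would first note that the case split in the definition of $\widehat{N}_{\mathcal{F},I}$ is automatic: if some $F_l$ fails to be a flat of $M$ then no $\mathcal{G}$ can make $\mathcal{G}\sqcup\mathcal{F}$ a flag of flats, so the count is already zero. Hence for every matroid $M$,
\[
\widehat{N}_{\mathcal{F},I}(M)\;=\;\#\bigl\{\mathcal{G}=(G_1\subsetneq\cdots\subsetneq G_m)\;\bigm|\;\operatorname{rk}_M(G_j)=i_j\ \forall j,\ \mathcal{G}\sqcup\mathcal{F}\ \text{is a flag of flats of }M\bigr\}.
\]

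Next, I would parametrize the problem by "interleavings" that are purely set-theoretic (independent of $M$). Let $\mathcal{C}$ denote the finite collection of chains of subsets $T_\bullet=(T_1\subsetneq\cdots\subsetneq T_{k+m})$ of $E$ that contain each $F_l$ as an entry; for $T_\bullet\in\mathcal{C}$ let $j'(1)<\cdots<j'(m)$ be the positions not occupied by the $F_l$'s. The key identity I would establish is
\[
\widehat{N}_{\mathcal{F},I}\;=\;\sum_{T_\bullet\in\mathcal{C}}\ \sum_{\substack{r_1<\cdots<r_{k+m}\\ r_{j'(j)}=i_j\ \forall j}} \widehat{\Phi}_{T_1,\ldots,T_{k+m}}^{\,r_1,\ldots,r_{k+m}}.
\]
For fixed $M$ the inner sum contributes at most one nonzero term—the one whose rank vector matches the actual $M$-ranks of the $T_j$'s—and that term is $1$ exactly when $T_\bullet$ is a flag of flats of $M$ with $\operatorname{rk}_M(T_{j'(j)})=i_j$ for every $j$. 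As $T_\bullet$ varies over $\mathcal{C}$, this enumerates each valid $\mathcal{G}$ exactly once (pair $\mathcal{G}$ with the unique $T_\bullet$ obtained by inserting $G_1,\ldots,G_m$ into $\mathcal{F}$ in ascending order), matching the count above. Both sums are finite since $E$ is finite, so this exhibits $\widehat{N}_{\mathcal{F},I}$ as a finite sum of valuations.

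There is no real technical obstacle; the hard work is packaged inside Theorem~\ref{thm:flag-valuation}, and the remaining effort is purely bookkeeping—checking the identity on both branches (whether or not $\mathcal{F}$ is a flag of flats of $M$) and confirming the bijection between valid $\mathcal{G}$'s and the nonzero contributions on the right-hand side.
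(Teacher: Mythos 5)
Your proof is correct and takes essentially the same route as the paper: the paper's proof is the one-line observation that $\widehat{N}_{\mathcal{F},I}$ is a sum of the rank-constrained flag valuations $\widehat{\Phi}_{F_1,\dots,F_{k+m}}^{\,r_1,\dots,r_{k+m}}$ over all ways to enlarge $\mathcal{F}$ by subsets and ranks, which is exactly the double sum you write down. Your version merely fills in the bookkeeping (the interleaving chains $T_\bullet\in\mathcal{C}$, the positions $j'(j)$, and the bijection with valid $\mathcal{G}$'s) that the paper leaves implicit.
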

\begin{proof}
$\widehat{N}_{\mathcal{F}, I}$ is the sum of the functions in \eqref{eq:flag-val-rank} enumerating over all possible flats and rank.
\end{proof}

\begin{proposition} \label{prop:degreeflagcounting}
Let $M$ be a matroid of rank $r$, and let
$$
\varnothing = F_0 \subsetneq F_1 \subsetneq \cdots \subsetneq F_k \subsetneq F_{k+1} = E
$$
be a flag of flats with $\operatorname{rk}(F_i) = r_i$. Let $d_0, d_1, \dots, d_{k+1}$ be nonnegative integers summing to $r-1$, such that $d_i > 0$ for all $1 \le i \le k$. Consider the monomial
$$
m = \beta^{d_0} x_{F_1}^{d_1} \cdots x_{F_k}^{d_k} \alpha^{d_{k+1}} \in A^{r-1}(M).
$$
Setting $\mathcal{F} = (F_1, \dots, F_k)$, we can express the degree of $m$ as
$$
\deg_M(m) = \sum_I c_I N_{\mathcal{F},I},
$$
where the coefficients $c_I \in \mathbb{Z}$ depend only on the sequences of exponents $(d_j)$ and ranks $(r_j)$.
\end{proposition}

\begin{proof}
Apply the pullback map $\varphi_M^{F_1} \colon A^*(M) \to A^*(M_{F_1}) \otimes A^*(M^{F_1})$ from Lemma~\ref{lem:pullback}. By Lemma~\ref{lem:pushforward}, the degree $\deg_M(m)$ factors as the product of the degrees in the respective Chow rings. For $0 \le i \le k$, we denote by $M_i$ the minor of $M$ bounded by $F_i$ and $F_{i+1}$ (that is, the minor obtained by contracting $F_i$ and restricting to $F_{i+1}$).

In the matroid $M^{F_1} = M_0$, the only nontrivial contribution comes from the terms $\beta$ and $x_{F_1}^{d_1 - 1}$. We expand the pullback $\varphi_M^{F_1}(x_{F_1}^{d_1-1}) = \left(-1 \otimes \alpha_{M^{F_1}} - \beta_{M_{F_1}} \otimes 1\right)^{d_1-1}$ using the binomial theorem. Because $M^{F_1}$ has rank $r_1$, only the term with degree $r_1 - 1$ in $A^*(M^{F_1})$ has a non-vanishing degree map. Since we already have a factor of $\beta^{d_0}$, the unique term providing the correct degree is
$$
(-1)^{d_1-1}\binom{d_1 - 1}{r_1-d_0 - 1} \deg_{M_0}\bigl(\alpha_{M_0}^{r_1 - d_0 - 1} \beta_{M_0}^{d_0}\bigr),
$$
and the corresponding tensor factor in $M_{F_1}$ is $\beta_{M_{F_1}}^{d_1-(r_1-d_0)} x_{F_2}^{d_2}\cdots x_{F_k}^{d_k} \alpha_{M_{F_1}}^{d_{k+1}}$.

Iterating this pullback process along the chain of flats $F_1, \dots, F_k$ yields
$$
\deg_M\bigl(\beta^{d_0}x_{F_1}^{d_1}\cdots x_{F_k}^{d_k}\alpha^{d_{k+1}}\bigr) \;=\; C \prod_{i=0}^{k} \deg_{M_i}\bigl(\alpha_{M_i}^{\,t_i}\beta_{M_i}^{\,s_i}\bigr),
$$
for integers $t_i, s_i$ and a combinatorial coefficient $C$ that depend only on the exponents $d_j$ and the ranks $r_j$ (but not on the specific matroid $M$).

By Lemma~\ref{lemma: key valuative}, each factor $\deg_{M_i}(\alpha_{M_i}^{t_i}\beta_{M_i}^{s_i})$ is a $\mathbb{Z}$-linear combination of $N_{\mathcal{F}_i,I}$ associated with the two-flat flag $\mathcal{F}_i = (F_i, F_{i+1})$ (where $I \subseteq \{r_i+1, \dots, r_{i+1}-1\}$). Consequently, the product $C \prod_{i=0}^{k} \deg_{M_i}(\alpha_{M_i}^{t_i}\beta_{M_i}^{s_i})$ expands into a linear combination of products $\prod_{i=0}^k N_{\mathcal{F}_i,I_i}$. Since each such product canonically equals $N_{\mathcal{F},I}$ for the full flag $\mathcal{F} = (F_1, \dots, F_k)$ with index set $I = \bigcup_i I_i$, we conclude that $\deg_M(m)$ is a $\mathbb{Z}$-linear combination of $N_{\mathcal{F},I}$, with coefficients depending exclusively on the $d_j$ and $r_j$.
\end{proof}

\begin{remark}
The proof may be shortened by applying \cite[Theorem 3.2]{EUR20}.
\end{remark}

\subsection{Valuativeness} \label{proof of prop val}

With the combinatorial flag counting identities established, we are now ready to prove Proposition \ref{prop:valuative} and, subsequently, the full Matroid Hirzebruch-Riemann-Roch formula. Recall that $i_M$ is the inclusion $X_M \rightarrow X_E$.

\begin{proof} [Proof of Proposition \ref{prop:valuative}] 
The pullback map $i_M^\ast$ between Chow rings sends $x_F$ to $x_F$ if $F$ is a flat of $M$, and 0 otherwise. 

We can assume that $A$ and $z$ are monomials, and we expand $z_M$ as the sum of $x_{F_1}^{d_1}\cdots x_{F_k}^{d_k}\alpha^{d_{k+1}}$ over all flags of flats having the same sequence of ranks. This is the same as enumerating the flats in the definition of $N_{\mathcal{F}, I}$, where the flag of flats $\mathcal{F}$ is given by $A$. By Proposition \ref{prop:degreeflagcounting}, $\Phi_{z,A}$ is a linear combination of $\widehat{N}_{\mathcal{F}, I}$, and thus is valuative.  
\end{proof}

\begin{proposition}
For a matroid $M$ of rank $r$, the Todd class of the tangent bundle $\operatorname{td} (T_M)$ lies in $\mathbb{Q}[\alpha_M, S_{1, M}, \ldots, S_{r-2, M}].$
\end{proposition}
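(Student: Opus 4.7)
The approach is to leverage Corollary~\ref{Only AG part} together with the remark that follows it, which already give $c(T_M) \in \mathbb{Z}[\alpha_M, S_{1,M}, \ldots, S_{r-2,M}]$. Since the Todd class is built out of the Chern classes by a universal formula with rational coefficients, the conclusion should drop out after unpacking how Chern data propagates through the Todd formula in a graded ring that vanishes in degrees $\ge r$.

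First, I would extract the homogeneous components: because $c(T_M)$ lies in $\mathbb{Z}[\alpha_M, S_{1,M}, \ldots, S_{r-2,M}]$, so does each individual Chern class $c_i(T_M) \in A^i(M)$, these being the degree-$i$ graded pieces of $c(T_M)$. Second, I would invoke the explicit power-series expression for the Todd class recalled in Section~\ref{subsec:HRR},
\[
\operatorname{td}(V) \;=\; 1 + \frac{c_1(V)}{2} + \frac{c_1(V)^2 + c_2(V)}{12} + \cdots,
\]
which the paper already notes is the definition used for an arbitrary $K$-class. This is a universal power series in the Chern classes with coefficients in $\mathbb{Q}$. Third, since $A^*(M)$ vanishes in degrees $\ge r$, only finitely many terms of the series survive when we substitute the $c_i(T_M)$, so $\operatorname{td}(T_M)$ becomes a $\mathbb{Q}$-polynomial in the $c_i(T_M)$ for $1 \le i \le r-1$. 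Substituting the expressions for the $c_i(T_M)$ in $\mathbb{Z}[\alpha_M, S_{1,M}, \ldots, S_{r-2,M}]$ gives $\operatorname{td}(T_M) \in \mathbb{Q}[\alpha_M, S_{1,M}, \ldots, S_{r-2,M}]$, as desired.

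There is no real obstacle here; the statement is essentially a formal consequence of Theorem~\ref{Chris tangent formula}, Corollary~\ref{Only AG part}, and the remark eliminating $S_{r-1,M}$. The only point that deserves care is the passage from a genuine vector bundle to a $K$-theoretic class, but this is already handled by the paper's convention of defining $\operatorname{td}$ for $K$-classes through the universal power-series expression in the Chern classes, which is exactly the formula I invoke.
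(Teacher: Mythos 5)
Your proposal is correct and takes essentially the same approach as the paper, whose proof consists of the single sentence that $c(T_M)$ lies in $\mathbb{Z}[\alpha_M, S_{1,M},\ldots,S_{r-2,M}]$ and leaves the rest (universal power series in Chern classes, truncation by degree vanishing) implicit. You have simply made those implicit steps explicit.
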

\begin{proof}
The Chern class of $T_M$ lies in $\mathbb{Z}[\alpha_M, S_{1, M}, \ldots, S_{r-2, M}]$.
\end{proof}
We have the following lemma.
\begin{lemma} \cite[Lemma 6.4]{LLPP2024} \label{lem:chirestrictvalue}
For any class $R \in K(X_E)$, the map $M \mapsto \chi(i_M^\ast R)$ is valuative.
\end{lemma}

The restriction map $i_M^\ast: K(X_E) \rightarrow K(M)$ is surjective. For a rank $r$ matroid $M$ and a $K \in K(M)$, we can write $K = i_M^\ast(R)$ for some $R \in K(X_E)$. For a fixed $R \in K(X_E)$, we would like to show that both sides of the equation $$\chi(i_M^*(R)) = \text{deg}_M\big(\operatorname{ch}(i_M^*(R)) \cdot  \operatorname{td}(T_M)\big)$$ are valuative.

Since $i_M:X_M \rightarrow X_E$ is an open immersion, the Chern class, hence the Chern character map commutes with the pullback map $i_M^*$ (see \cite[Theorem 3.2(d)]{Fulton}). Hence, $\operatorname{ch}(i_M^*(K)) = i_M^*(\operatorname{ch}(K))$. Since $\operatorname{td}(T_M) \in \mathbb{Q}[\alpha, S_{1,M}, \ldots, S_{r-1, M}]$, the right-hand side is valuative by Proposition \ref{prop:valuative}.

The equation holds for realizable matroids of the same rank, so it can be extended to arbitrary matroids by Proposition \ref{prop:linear}. In particular, $$\chi(K) = \text{deg}_M\big(\operatorname{ch}(K) \cdot  \operatorname{td}(T_M)\big).$$

\begin{corollary} \label{Todd class formula}
For a matroid $M$ of rank $r$, the Todd class $\operatorname{Todd}_M = \operatorname{td}(T_M)$.
\end{corollary}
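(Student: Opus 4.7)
The plan is to show that the class $\operatorname{td}(T_M) \in A^*(M)_{\mathbb Q}$ satisfies the defining property of $\operatorname{Todd}_M$ in Proposition~\ref{prop: HRR}, namely
\[
\chi(K) \;=\; \deg_M\!\bigl(\operatorname{ch}(K)\cdot \operatorname{td}(T_M)\bigr)\qquad \text{for every } K\in K(M),
\]
and then invoke uniqueness. Since $i_M^*\colon K(X_E)\twoheadrightarrow K(M)$ is surjective, it suffices to verify this identity for classes of the form $K=i_M^*(S)$ with $S\in K(X_E)$ fixed, as $M$ varies over rank-$r$ matroids on $E$.

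The key move is to view each side of this identity as a function of $M$ and show both are valuative. For the left-hand side, $M\mapsto \chi(i_M^*S)$ is valuative by the cited \cite[Lemma 6.4]{LLPP2024}. For the right-hand side I would use that $\operatorname{ch}$ commutes with the pullback $i_M^*$ (since $i_M$ is an open immersion), so $\operatorname{ch}(i_M^*S) = i_M^*(\operatorname{ch}(S))$ with $\operatorname{ch}(S)\in A^*(X_E)_{\mathbb Q}$ independent of $M$. By Corollary~\ref{Only AG part} and the remark following Theorem~\ref{Chris tangent formula}, $\operatorname{td}(T_M)$ lies in $\mathbb Q[\alpha_M, S_{1,M},\ldots, S_{r-2,M}]$, so each graded piece of $\operatorname{ch}(i_M^*S)\cdot \operatorname{td}(T_M)$ has exactly the shape $i_M^*(A)\cdot z_M$ appearing in Proposition~\ref{prop:valuative}. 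Applying that proposition term by term shows $M\mapsto \deg_M\bigl(\operatorname{ch}(i_M^*S)\cdot \operatorname{td}(T_M)\bigr)$ is valuative.

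With both sides valuative, it remains to verify the identity on a spanning set of matroids. For $M$ realizable by $L\subseteq k^E$, $\operatorname{td}(T_M)$ coincides with the Todd class of the genuine tangent bundle of $W_L$ by Theorem~\ref{thm:main-tangent}(1), and the classical Hirzebruch--Riemann--Roch theorem on the smooth projective variety $W_L$ gives the identity. By Proposition~\ref{prop:linear}, any loopless matroid polytope is a $\mathbb Z$-linear combination of Schubert matroid polytopes (all realizable), so valuativity propagates the identity to every rank-$r$ loopless matroid on $E$. Finally, the uniqueness assertion in Proposition~\ref{prop: HRR} forces $\operatorname{Todd}_M = \operatorname{td}(T_M)$.

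The only delicate step is the valuativity of the right-hand side: one needs the explicit form of $\operatorname{td}(T_M)$ to lie in the polynomial algebra $\mathbb Q[\alpha_M, S_{1,M},\ldots,S_{r-2,M}]$ so that Proposition~\ref{prop:valuative} applies. This is exactly what was secured by Theorem~\ref{Chris tangent formula} together with Corollary~\ref{Only AG part}, so all the preparatory work has already been done and the corollary follows formally.
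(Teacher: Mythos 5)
Your proof is correct and follows essentially the same route as the paper: show both sides of the Hirzebruch--Riemann--Roch identity are valuative functions of $M$ (using \cite[Lemma 6.4]{LLPP2024} for the left side, and Proposition~\ref{prop:valuative} together with the membership of $\operatorname{td}(T_M)$ in $\mathbb{Q}[\alpha_M, S_{1,M},\ldots,S_{r-2,M}]$ for the right side), verify the identity on realizable matroids via classical HRR, propagate by Proposition~\ref{prop:linear}, and then invoke the uniqueness clause of Proposition~\ref{prop: HRR}. This matches the paper's argument step for step.
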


\begin{remark}
The factors in Theorem \ref{Chris tangent formula} are not the Chern roots since $T_{M}$ has rank $r-1$, not $2r-1$. Remarkably, the Todd class still equals $$\prod_{i=1}^{2r-1}\frac{t_i}{1-e^{-t_i}},$$ where $t_i = S_{i, M}$ for $0 < i < r$ and $t_{r+i} = \alpha - \sum_{j=1}^i S_{j, M}$ for $0\leq i < r$. This is because the Todd class formula does not depend on the rank of the bundle.
\end{remark}

\subsection{Serre duality for matroids}

In \cite{LLPP2024}, the Serre duality equation $\chi(\mathcal{E}) = (-1)^{r-1} \chi\bigl(\mathcal{E}^\vee \otimes \omega_{M}\bigr)$ is proved by extending the equation from the geometric case. We show the result from a different perspective.
\begin{corollary}
Let $M$ be a matroid of rank $r$, and let $\omega_{M} \in K(M)$ denote the line bundle with first Chern class
$$
c_1(\omega_M) := -c_1(T_M) = -r\alpha + \sum_{i = 1}^{r-2} (r-i-1)S_{i, M}.
$$
Then for $\mathcal{E} \in K(M)$ we have
$$
\chi(\mathcal{E}) = (-1)^{r-1} \chi\bigl(\mathcal{E}^\vee \otimes \omega_{M}\bigr).
$$
\end{corollary}

\begin{proof}
First, recall a general fact about Todd classes: for any vector bundle $V$ with Chern roots $x_i$, $\operatorname{td}(V) = \prod \frac{x_i}{1-e^{-x_i}}$ and $\exp(-c_1(V)) = \prod e^{-x_i}$. Multiplying these together yields
$$
\operatorname{td}(V) \cdot \exp(-c_1(V)) = \prod \frac{x_i}{e^{x_i}-1} = \operatorname{td}(V^\vee) = \operatorname{td}(V)^\vee.
$$
Applying this to the tangent bundle $T_M$ and noting that $c_1(\omega_M) = -c_1(T_M)$, we obtain the relation $\operatorname{td}(T_M)\exp(c_1(\omega_M)) = \operatorname{td}(T_M)^\vee$. 

Substituting this identity into the Hirzebruch--Riemann--Roch formula
$$
\chi\bigl(\mathcal{E}^\vee\otimes \omega_{M}\bigr) = \deg_M \big(\operatorname{ch}(\mathcal{E}^\vee) \cdot \operatorname{ch}(\omega_{M}) \cdot \operatorname{td}(T_M)\big),
$$ and using the relations $\operatorname{ch}(\mathcal{E}^\vee) = \operatorname{ch}(\mathcal{E})^\vee$ and $\operatorname{ch}(\omega_M) = \exp(c_1(\omega_M))$, this becomes:
$$
\chi\bigl(\mathcal{E}^\vee\otimes \omega_{M}\bigr) = \deg_M \big(\operatorname{ch}(\mathcal{E})^\vee \cdot \operatorname{td}(T_M)^\vee\big) = \deg_M \big( (\operatorname{ch}(\mathcal{E}) \cdot \operatorname{td}(T_M))^\vee \big).
$$
The dual operation $(\cdot)^\vee$ acts on the degree $k$ component of the Chow ring by multiplication by $(-1)^k$. Since the degree map $\deg_M$ extracts the top-degree component in $A^{r-1}(M)$, evaluating the dual of the product $\operatorname{ch}(\mathcal{E}) \cdot \operatorname{td}(T_M)$ pulls out a global sign of $(-1)^{r-1}$. Therefore,
$$
\chi\bigl(\mathcal{E}^\vee\otimes \omega_{M}\bigr) = (-1)^{r-1} \deg_M \big( \operatorname{ch}(\mathcal{E}) \cdot \operatorname{td}(T_M) \big) = (-1)^{r-1}\chi(\mathcal{E}).
$$
\end{proof}

\subsection{The Chow (Poincaré) polynomial}
For a matroid $M$ realized by $L$ over $\mathbb{C}$, \(W_L\) is a wonderful compactification obtained by an iterated sequence of blow-ups along smooth subvarieties, each of which is again a wonderful compactification of smaller rank. By inductively assuming that each blow-up center satisfies $h^{p,q}=0$ for all $p\neq q$, it follows that the final compactification $W_L$ also satisfies:
\[
h^{p,q}(W_L)=0 \quad \text{for all } p\neq q,
\]
i.e., its Hodge diamond is supported purely on the diagonal. Furthermore, the cycle map from the Chow ring to the cohomology ring continues to be an isomorphism. (For example, see \cite[Theorem 7.31]{Voisin}.)  

For a matroid $M$ of rank $r$, one defines the Chow polynomial
\[
P_M(t) \,=\, \sum_{p=0}^{r-1} \dim A^{p}(M) \; t^p.
\]

If $M$ is realized by $L$, we have \[
\dim A^{p}(M) = \dim H^{2p}(W_L) = h^{p,p}(W_L) = (-1)^p \chi(W_L, \Omega^p_{W_L}).
\]

For an arbitrary matroid $M$ of a fixed rank $r$, we can expand $(-1)^p \chi(\Omega^p_M)$ by $\deg\left(\operatorname{ch}(\Omega^p_M) \cdot \operatorname{td}(T_M)\right)$. The Chern class of $\Omega^p_{M}$ can be derived from the Chern class of the tangent bundle, so it lies in the algebra generated by $\alpha, S_{1,M}, \ldots, S_{r-2, M}$. In particular, its Euler characteristic function is a polynomial in $\alpha, S_{1,M}, \ldots, S_{r-1, M}$, and is valuative. Moreover, the Chow polynomial is also valuative \cite[Section 8.4]{Valuative}. Since the equation $\dim A^{p}(M) = (-1)^p \chi(\Omega^p_M)$ holds for realizable matroids, it holds for arbitrary matroids.

\begin{theorem} \label{Chow polynomial formula}
The coefficients of the Chow polynomial are given by the formula: $$\dim A^{p}(M) = (-1)^p \chi(\Omega^p_M) = (-1)^p\deg\left(\operatorname{ch}(\Omega^p_M) \cdot \operatorname{td}(T_M)\right).$$
\end{theorem}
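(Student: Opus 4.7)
The plan is to establish the identity first for matroids realizable over $\mathbb{C}$ via classical Hodge theory on the wonderful compactification, and then to extend to arbitrary matroids by showing both sides are valuative functions of $M$ (with rank and $p$ fixed) and appealing to Proposition \ref{prop:linear}.

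For the realizable case, suppose $M$ is realized by $L \subset \mathbb{C}^E$ so that $W_L$ exists. I would argue by induction on $r$ that the Hodge diamond of $W_L$ is concentrated on the diagonal, i.e.\ $h^{p,q}(W_L)=0$ for $p\neq q$. The base case $\mathbb{P}L\cong \mathbb{P}^{r-1}$ is clear, and the inductive step uses the standard blow-up formula for Hodge numbers together with the fact that each blow-up center in the De Concini--Procesi construction is itself a wonderful compactification of a smaller rank matroid. Combined with the cycle class map $A^*(W_L)\xrightarrow{\sim} H^*(W_L)$, this yields $\dim A^p(M)=h^{p,p}(W_L)=(-1)^p\chi(W_L,\Omega^p_{W_L})$, and classical Hirzebruch--Riemann--Roch on $W_L$ gives the second equality. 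The identification $i^*\widetilde{T_M}=T_{W_L}$ and $i^*\widetilde{\Omega_M}=\Omega_{W_L}$ from Definition \ref{tangent bundle def} (and compatibility of $\wedge^p$ with pullback on $K$-classes) ensures this is the same quantity as $(-1)^p\deg(\operatorname{ch}(\Omega_M^p)\cdot\operatorname{td}(T_M))$ computed purely on $X_M$.

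For the extension to arbitrary matroids, I would check that both sides are valuative. That $M\mapsto\dim A^p(M)$ is valuative is \cite[Section 8.4]{Valuative}. For the right-hand side, by the Remark following Theorem \ref{Chris tangent formula} we have $c(T_M)\in\mathbb{Z}[\alpha_M,S_{1,M},\ldots,S_{r-2,M}]$, and hence $c(\Omega_M)=c(T_M)^\vee$ lies in the same polynomial ring. Using the splitting principle to write $\operatorname{ch}(\wedge^p\Omega_M)$ as a universal polynomial in the Chern classes of $\Omega_M$, and likewise writing $\operatorname{td}(T_M)$ as a universal polynomial in the Chern classes of $T_M$, the product $\operatorname{ch}(\Omega^p_M)\cdot\operatorname{td}(T_M)$ becomes a fixed $\mathbb{Q}$-polynomial in $\alpha_M,S_{1,M},\ldots,S_{r-1,M}$, independent of $M$. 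Proposition \ref{prop:valuative} (taking $A=1$) then immediately gives valuativity of $M\mapsto\deg_M(\operatorname{ch}(\Omega_M^p)\cdot\operatorname{td}(T_M))$.

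Having established both sides are valuative and agree on all matroids realizable over $\mathbb{C}$, I invoke Proposition \ref{prop:linear}: the indicator function $\mathbbm{1}_{P_M}$ of an arbitrary rank $r$ matroid is a $\mathbb{Z}$-linear combination of indicator functions of Schubert matroid polytopes of rank $r$, which are realizable over $\mathbb{C}$. Valuativity propagates the equality through this linear combination, proving the formula for all $M$. The only mildly delicate point I anticipate is bookkeeping: one must verify that $\wedge^p$ of a virtual $K$-class is well-defined via the exterior power $\lambda$-operations and that $\operatorname{ch}$ intertwines $\wedge^p$ with the expected symmetric-function expression in the Chern roots, so that the ``pure polynomial in $\alpha,S_{i,M}$'' description on which valuativity hinges is genuinely universal in $M$.
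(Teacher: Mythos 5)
Your proposal follows essentially the same route as the paper: establish the identity for $\mathbb{C}$-realizable matroids via the diagonal Hodge diamond of $W_L$ and classical Hirzebruch--Riemann--Roch, then extend to all matroids by noting both sides are valuative (the left via \cite[Section 8.4]{Valuative}, the right via Proposition~\ref{prop:valuative} once $\operatorname{ch}(\Omega_M^p)\cdot\operatorname{td}(T_M)$ is seen to be a fixed polynomial in $\alpha_M$ and the $S_{i,M}$'s) and invoking Proposition~\ref{prop:linear}. Your extra care with the splitting principle and the $\lambda$-ring structure on $K(M)$ is a reasonable elaboration of a point the paper leaves implicit, but does not constitute a different argument.
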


This shows how the dimension of the Chow ring can be derived from the tangent bundle class. For example, applying Theorem \ref{Chow polynomial formula} at $p = 1$ to a rank $r$ matroid $M$ yields:
\[
-\dim A^1(M) = \chi(T_M^\vee).
\]

The dimension of $A^1(M)$ equals the number of generators minus the number of linear relations, which is the number of proper flats minus the number of rank 1 flats, plus 1 (cf. Section \ref{subsec:Chow ring of a matroid}).

Furthermore, recall from Definition \ref{tangent bundle def} that:
\[
\chi(T_M^\vee) = \chi(i_M^*(T_{X_E})^\vee) - \chi(i_M^*(Q_M)^\vee).
\]

We can compute the first term, $\chi(i_M^*(T_{X_E})^\vee)$, using the following theorem.

\begin{theorem}\label{lem:chi-flatiszero}
Let $M$ be a matroid on ground set $E$, and let $F$ be a proper flat of $M$. Let $[\mathcal{X}_F] \in K(M)$ denote the line bundle with $c_1([\mathcal{X}_F]) = x_F$. Then:
\[
\chi(-[\mathcal{X}_F]) = \chi(\mathcal{O}(-x_F)) = 0.
\]
In particular, letting $n = |E|$, we have:
\[
\chi(i_M^*(T_{X_E})^\vee) = n - 1 - (\text{number of proper flats}).
\]
\end{theorem}
\begin{proof}
If $M$ is realizable by $L$, we can treat $x_F$ as a divisor on $W_L$. This yields the following short exact sequence of sheaves on $W_L$:
\[
0\longrightarrow\mathcal{O}(-x_F)\longrightarrow\mathcal{O}\longrightarrow\mathcal{O}_{x_F}\longrightarrow0.
\]

Because $W_L$ and the vanishing set of $x_F$ are rational varieties, we have $\chi(\mathcal{O})=\chi(\mathcal{O}_{x_F})=1$ in this geometric setting. It immediately follows that $\chi(\mathcal{O}(-x_F))=0$.

Next, fix a proper subset $\emptyset \subsetneq S \subsetneq E$ and let $[\mathcal{X}_S] \in K(X_E)$ denote the line bundle with $c_1([\mathcal{X}_S])= x_S$. Consider the function on the set of matroids on a ground set $E$:
\[
f(M)=\chi(i_M^*(-[\mathcal{X}_S])),
\]
which is valuative by Lemma \ref{lem:chirestrictvalue}. If $S$ is not a flat of $M$, then $i_M^*(-[\mathcal{X}_S])=0$ and $f(M)=1$. If $S$ is a flat and $M$ is realizable, then $f(M)=0$ as shown above. 

The difference $f-\mathbbm{1}_{\{S\text{ is not a flat}\}}$ is also valuative and vanishes on all realizable matroids by Theorem~\ref{thm:flag-valuation}; therefore, it must vanish on all matroids. Thus, $\chi(-[\mathcal{X}_F])=0$.

Recall that the Chern character of a class in the $K$-ring depends only on its total Chern class and its rank. The fact that the rank of $i_M^*(T_{X_E})^\vee$ is $n-1$ can be verified in the realizable case and extended to arbitrary matroids via a short valuative argument. By \eqref{eq:tangent bundle formula}, we have:
\[
c(i_M^*(T_{X_E})^\vee) = \prod_{\text{proper flats } F} (1 - x_F).
\]

Therefore, the Chern characters
\[
\operatorname{ch}(i_M^*(T_{X_E})^\vee) \quad \text{and} \quad\operatorname{ch}\left(\bigoplus_{\text{proper flats } F} -[\mathcal{X}_F]\right)\; = \sum_{\text{proper flats } F}\operatorname{ch}( -[\mathcal{X}_F])
\]
differ only in their degree-0 components, which represent the ranks of their respective classes. In particular, $\operatorname{ch}(i_M^*(T_{X_E})^\vee) - (n-1)= \sum_{\text{proper flats } F}\,(\operatorname{ch}( -[\mathcal{X}_F]) - 1)$. Hence,
\begin{align*}
    \chi(i_M^*(T_{X_E})^\vee) &= \deg(\operatorname{ch}(i_M^*(T_{X_E})^\vee)\cdot \operatorname{td}(T_M)) \\ 
    &= \left(n-1-\sum_{\text{proper flats } F} 1\right) \deg(\operatorname{td}(T_M))+ \sum_{\text{proper flats } F} \deg(\operatorname{ch}(-[\mathcal{X}_F])\cdot \operatorname{td}(T_M)) \\
    &= n-1-\sum_{\text{proper flats } F} 1,
\end{align*}
where we use that $\deg(\operatorname{td}(T_M)) = 1$ and $\deg(\operatorname{ch}(-[\mathcal{X}_F])\cdot \operatorname{td}(T_M)) = \chi(-[\mathcal{X}_F]) = 0$. This completes the proof.

\end{proof}

This result directly yields the following corollary.

\begin{corollary}
Let $M$ be a matroid on ground set $E$. We have:
\[
\chi(i_M^*(Q_M)^\vee) = |E| - (\text{number of rank 1 flats}).
\]
\end{corollary}
\section{A geometric derivation via blow-ups}\label{sec:realizable}

In the previous section, we derived a product formula for $c(T_M)$. We now present an alternative geometric derivation. This approach begins with the realizable case, mirroring the blow-up construction of the De Concini-Procesi wonderful compactification, and then extends to all matroids via valuativity. While more computationally intensive, this method provides direct geometric insight and yields a recursive formula for the same Chern class, as stated in the following theorem.

\begin{theorem} \label{tangent bundle formula}
For each integer $n \geq 0$, there is a polynomial $T_n(x,y_1,\dots,y_{n-1}) \in \mathbb{Z}[x, y_1, \ldots, y_{n-1}] \subset \mathbb{Z}[x, y_1, \ldots]$ such that $T_0 = 1$, and for $n>0$, $$T_n = (1 + x)^{n+1} + \sum_{\substack{1 \leq j \leq n - 1, \\ 0 \leq i \leq n + 1 - j}} \binom{n + 1 - j}{i}(T_{j-1})\big((1+y_j)(1-y_j)^i-1\big)(x-\sum_{k = 1}^{j - 1}y_k)^{n + 1 - i - j}.$$

For an arbitrary matroid $M$ of rank $r$, we write $T_{r-1}(M) = T_{r-1}(\alpha_M, S_{1, M}, \ldots, S_{r-2, M})$. Then for all matroids $M$, the total Chern class of the tangent bundle is given by $$c(T_M) = T_{r-1}(M).$$
\end{theorem}

\begin{remark}
For small values of $r$, one can verify that this recursive formula expands to the product formula given in Theorem~\ref{Chris tangent formula}. This also explains why there is no $S_{r-1, M}$ term, because the corresponding blow-up centers have codimension 1.
\end{remark}
The remainder of this section is dedicated to proving Theorem \ref{tangent bundle formula}.
\subsection{The realizable case}

Suppose $M$ is a rank $r$ matroid realized by $L \subseteq \mathbb{K}^E$. The wonderful compactification $W_L$ is obtained by successively blowing up $\mathbb{P}^{r-1}$ along all linear subspaces corresponding to flats. At each step, we track the effect on Chern classes.

For any smooth variety $X$, we write $c(X)$ for the total Chern class of the tangent bundle $c(T_X)$. 

Fix smooth projective varieties $X$ and $Y$ with $i:X\hookrightarrow Y$. Let $d$ be the codimension of $X$. When we blow up $Y$ at center $X$, there is a blow-up diagram 

\begin{equation}\label{eq:blowup}
\begin{tikzcd}
\widetilde X \arrow[r, hookrightarrow, "j"] \arrow[d, "g"'] &
\widetilde Y \arrow[d,  "f"] \\
X \arrow[r, hookrightarrow, "i"] & Y
\end{tikzcd}\end{equation}

Let \(N=N_{X/Y}\) denote the normal bundle of \(X\) in \(Y\) (rank \(d\)). Then the exceptional divisor \(\widetilde X\subset\widetilde Y\) is isomorphic to \(\mathbb{P}(N)\), and the normal bundle \(N_{\widetilde X/\widetilde Y}\) is \(\mathcal{O}_{\mathbb{P}(N)}(-1)\).

\begin{proposition} \cite[Theorem 15.4]{Fulton} \label{blowupchern}
With the above notation, and $\zeta =  c_1(\mathcal{O}_{\mathbb{P}(N)}(1))$,
    \[ c(\widetilde{Y}) - f^\ast(c(Y)) = j_{\ast}(g^\ast c(X) \cdot \gamma),\]
where 
\[ \gamma = \frac{1}{\zeta}\left[\sum_{i = 0}^d g^\ast c_{d-i}(N) - (1 - \zeta) \sum_{i = 0} ^d (1 + \zeta)^ig^\ast c_{d-i}(N)\right].\]

Here, the polynomial expression inside the brackets has no constant term with respect to $\zeta$, making this formal division $\frac{1}{\zeta}$ well-defined.
\end{proposition}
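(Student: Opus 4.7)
The plan is to identify the difference $c(\widetilde Y) - f^*c(Y)$ as a pushforward $j_*\eta$ from the exceptional divisor, and to compute $\eta$ by applying the self-intersection formula together with three standard short exact sequences attached to the blow-up diagram \eqref{eq:blowup}: the normal sequence for $\widetilde X \subset \widetilde Y$, the relative tangent sequence for the projective bundle $g: \widetilde X = \mathbb{P}(N) \to X$, and the Euler sequence on $\mathbb{P}(N)$. Existence of $\eta$ is not hard: since $f$ is an isomorphism over $Y \setminus X$, both $c(\widetilde Y)$ and $f^*c(Y)$ restrict to the same class on $\widetilde Y \setminus \widetilde X$, so by the localization sequence for Chow groups their difference is supported on $\widetilde X$ and lies in the image of $j_*$.

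To pin down $\eta$, apply $j^*$ to both sides. By the self-intersection formula, $j^*j_*\eta = c_1(N_{\widetilde X/\widetilde Y}) \cdot \eta = -\zeta \eta$, using the identification $N_{\widetilde X/\widetilde Y} = \mathcal{O}_{\mathbb{P}(N)}(-1)$. For the left-hand side, the normal sequence for $\widetilde X \subset \widetilde Y$ gives $j^*c(\widetilde Y) = c(T_{\widetilde X})(1-\zeta)$; the relative tangent sequence for $g$ gives $c(T_{\widetilde X}) = g^*c(X) \cdot c(T_g)$; and the Euler sequence on $\mathbb{P}(N)$ expresses $c(T_g)$ as an explicit polynomial in $\zeta$ and $g^*c_\bullet(N)$, of the form $\sum_{k=0}^d (1+\zeta)^{d-k} g^*c_k(N)$ after simplification. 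On the other hand, the normal sequence for $X \subset Y$ gives $j^*f^*c(Y) = g^*c(X) \cdot g^*c(N)$. Subtracting, dividing by $-\zeta$, and re-indexing via $i = d-k$ produces $\eta = g^*c(X) \cdot \gamma$ with $\gamma$ exactly the class claimed.

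The one subtle point is that $\gamma$ is defined as $1/\zeta$ times a bracketed expression, so the bracket must be divisible by $\zeta$ for $\gamma$ to be an honest class in $A^*(\widetilde X)$. This is easy to verify: substituting $\zeta = 0$, both sums collapse to $g^*c(N)$ and cancel, making the division legitimate and $\gamma$ an honest polynomial in $\zeta$ and the Chern classes of $N$. The main bookkeeping obstacle is keeping the conventions for $\mathbb{P}(N)$, for $\mathcal{O}_{\mathbb{P}(N)}(\pm 1)$, and for the direction of the Euler sequence consistent throughout, and in particular verifying that the formula derived for $c(T_g)$ matches the exponents $(1+\zeta)^{d-k}$ appearing in the target expression; once those conventions are fixed, the computation is fully determined by the three exact sequences above.
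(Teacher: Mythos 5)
This proposition is cited directly from Fulton (Theorem~15.4); the paper offers no proof of its own, so your attempt is being judged against the standard argument. Your bookkeeping is correct: applying $j^*$, using $j^*j_*\eta=c_1(N_{\widetilde X/\widetilde Y})\eta=-\zeta\eta$, the normal sequences for $\widetilde X\subset\widetilde Y$ and $X\subset Y$, the relative Euler sequence $0\to\mathcal O\to g^*N\otimes\mathcal O(1)\to T_{\widetilde X/X}\to0$, and the reindexing $i=d-k$ does reproduce exactly the displayed $\gamma$, and your observation that the bracketed expression, regarded as a formal polynomial in $\zeta$, has vanishing constant term is the right justification for the division.

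The gap is logical, not computational: applying $j^*$ and solving yields a \emph{necessary} condition, not the claimed identity. You know the difference $c(\widetilde Y)-f^*c(Y)$ equals $j_*\eta_0$ for \emph{some} $\eta_0$, and you deduce $-\zeta\eta_0=-\zeta\cdot g^*c(X)\gamma$; but multiplication by $\zeta$ on $A^*(\mathbb P(N))$ is not injective in general (in the basis $1,\zeta,\dots,\zeta^{d-1}$ over $A^*(X)$ its determinant is $\pm c_d(N)$, which can be a zero divisor), so this does not determine $\eta_0$; and even if it did, $j_*$ itself has a kernel, so $\zeta(\eta_0-g^*c(X)\gamma)=0$ does not imply $j_*\eta_0=j_*(g^*c(X)\gamma)$. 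You need a uniqueness input. Two standard ways to close the argument: (a)~invoke the blow-up decomposition $A^*(\widetilde Y)\cong f^*A^*(Y)\oplus j_*\!\bigl(\bigoplus_{i=0}^{d-2}\zeta^i g^*A^*(X)\bigr)$ (Fulton, Prop.~6.7), on whose second summand $j_*$ is injective, and check that $\gamma$ reduces mod the Grothendieck relation to a polynomial of degree at most $d-2$ in $\zeta$; or (b)~follow Fulton's actual route, which starts from the exact sequence $0\to T_{\widetilde Y}\to f^*T_Y\to j_*Q\to 0$ (with $Q$ the universal quotient bundle on $\mathbb P(N)$), applies Riemann--Roch without denominators (Fulton, Thm.~15.3) to write $c(j_*Q)=1+j_*(\cdot)$ explicitly, and then expands $c(\widetilde Y)=f^*c(Y)\cdot c(j_*Q)^{-1}$ using the projection formula and $j^*j_*=-\zeta\cdot(-)$ to obtain the difference directly as a pushforward, with no appeal to injectivity of $j^*$ or $j_*$.
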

In order to compute $c(W_L)$ by the formula above, we need to compute the normal bundle of the blow-up center during the blow-up process. We will use the following Proposition. 

\begin{proposition} \cite[B.6.10]{Fulton} \label{blowupnormal}
Suppose $X \subsetneq Y \subsetneq Z$ is a sequence of closed regular embeddings. Let $\widetilde{Z} = \mathrm{Bl}_X Z$ and let $\widetilde{Y} = \mathrm{Bl}_X Y$ be the strict transform of $Y$. Then the embedding of $\widetilde{Y}$ into $\widetilde{Z}$ is a regular embedding, and its normal bundle is given by: $$N_{\widetilde{Y}/\widetilde{Z}} = \pi^\ast N_{Y/Z} \otimes \mathcal{O}(-E),$$ where $E$ is the exceptional divisor of the blow-up in $\widetilde{Y}$ and $\pi$ is the projection map $\widetilde{Y} \rightarrow Y$.
\end{proposition}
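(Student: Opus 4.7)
The plan is to reduce the statement to a local coordinate computation, using the universal property of the blow-up to construct the embedding and then analyzing how the defining equations transform. As a first step I would produce the closed immersion $\widetilde{Y} \hookrightarrow \widetilde{Z}$ intrinsically: the composition $Bl_X Y \to Y \hookrightarrow Z$ pulls back the ideal sheaf of $X$ to a Cartier divisor on $Bl_X Y$, so by the universal property of the blow-up $\widetilde{Z} = Bl_X Z$ it factors through $\widetilde{Z}$. The resulting map is a closed immersion and identifies $\widetilde{Y}$ with the scheme-theoretic strict transform of $Y$ in $\widetilde{Z}$.

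To verify regularity and extract the normal bundle, I would choose local coordinates on $Z$ adapted to the flag $X \subsetneq Y \subsetneq Z$: pick functions $x_1,\ldots,x_a,y_1,\ldots,y_b,z_1,\ldots,z_c$ on an affine open of $Z$ so that $Y$ is cut out by $(y_1,\ldots,y_b)$ and $X$ by $(x_1,\ldots,x_a,y_1,\ldots,y_b)$. On an affine chart of $\widetilde{Z}$ we may write $x_i = u v_i$ and $y_j = u w_j$ with $u$ a local defining equation for the exceptional divisor. The total transform of $Y$ is $(u w_1,\ldots,u w_b)$; removing the exceptional component leaves the strict transform $\widetilde{Y} = \{w_1 = \cdots = w_b = 0\}$, which exhibits the embedding as a regular embedding of codimension $b$.

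For the normal bundle, the key observation is the Jacobian identity $dy_j = u\, dw_j + w_j\, du \equiv u\, dw_j \pmod{(w_1,\ldots,w_b)}$. This shows that the natural pullback map of conormal sheaves $\pi^* N_{Y/Z}^\vee \to N_{\widetilde{Y}/\widetilde{Z}}^\vee$ sends each basis element $\pi^*(dy_j)$ to $u \cdot dw_j$, so it is an isomorphism away from $E$ and vanishes to first order along $E$. Factoring through the ideal-sheaf inclusion $\mathcal{O}_{\widetilde{Y}}(-E) \otimes N_{\widetilde{Y}/\widetilde{Z}}^\vee \hookrightarrow N_{\widetilde{Y}/\widetilde{Z}}^\vee$ yields an isomorphism
\[
\pi^* N_{Y/Z}^\vee \;\xrightarrow{\;\sim\;}\; N_{\widetilde{Y}/\widetilde{Z}}^\vee \otimes \mathcal{O}_{\widetilde{Y}}(-E),
\]
whose dual gives the asserted identity $N_{\widetilde{Y}/\widetilde{Z}} \cong \pi^* N_{Y/Z} \otimes \mathcal{O}_{\widetilde{Y}}(-E)$.

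The main technical point is to ensure that the chart-wise construction glues. On overlaps the local parameter $u$ is replaced by another local generator of the ideal sheaf of the exceptional divisor of $\widetilde{Z}$, and the ratio of two such generators is exactly a transition function for $\mathcal{O}(E)$. I would verify that the division-by-$u$ step is compatible with these transitions so that the resulting isomorphism of line-bundle-twisted sheaves is canonical and globally well-defined; once this is checked, the regularity of $\widetilde{Y} \hookrightarrow \widetilde{Z}$ is automatic from the local equations $(w_1,\ldots,w_b)$, and the normal bundle formula follows.
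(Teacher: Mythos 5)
The paper does not give a proof of this statement; it merely cites \cite[B.6.10]{Fulton}. Your argument is essentially a correct reconstruction of the standard local-model proof of that result, and the approach (universal property to produce the embedding, adapted local coordinates to verify regularity, and the Jacobian/division-by-$u$ computation to identify the twist) is the expected one.

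Two places where the write-up should be tightened. First, the local coordinate analysis is only valid on the blow-up charts in which the pivot variable $u$ is one of the $x_i$; in the charts where $u$ is one of the $y_j$, the total transform of $Y$ is contained in the exceptional divisor, so $\widetilde{Y}$ misses those charts entirely. You implicitly work only in the good charts, but you should say so, since otherwise the claim ``$\widetilde{Y} = \{w_1 = \cdots = w_b = 0\}$'' is literally false on the other charts. Second, you assert early on that the map $Bl_X Y \to \widetilde{Z}$ produced by the universal property is a closed immersion onto the strict transform, but this identification is precisely what the subsequent local computation should establish; as phrased it reads as if it were being assumed. The fix is cosmetic: present the universal-property map, then use the chart computation both to show its image is $V(w_1,\ldots,w_b)$ (hence equals the strict transform, since $V(w_1,\ldots,w_b)$ has no component inside $E$) and to read off regularity and the normal bundle. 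With these small reorderings the proof is complete and matches what one finds in Fulton.
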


Let's compute $c(W_L)$ for a realization $L \subset \mathbb{K}^E$ of small dimension to motivate the main theorem.

\begin{example}
Suppose $M$ is a rank 3 matroid realized by $L \subset \mathbb{K}^E$. To construct $W_L$ we start from $\mathbb{P}L \cong \mathbb{P}^2$. Every rank 2 flat $F$ corresponds to a point $p_F$, and $W_L$ is the blow-up of $\mathbb{P}^2$ at all points $p_F$ for $F$ a rank 2 flat.

The total Chern class of \(\mathbb{P}^n\) equals \((1+\alpha)^{n+1}\), so for \(\mathbb{P}^2\) we have \(c_1=3\alpha\) and \(c_2=3\alpha^2\). Blowing up \(\mathbb{P}^2\) at \(p_F\) contributes \(-x_F\) to \(c_1\) and \(-x_F^2\) to \(c_2\). Therefore,
\[ c_1(W_L)=3\alpha-\sum_{F\ \text{rank }2} x_F = 3\alpha - S_{1, M},\qquad c_2(W_L)=3\alpha^2-\sum_{F\ \text{rank }2} x_F^2 = 3\alpha^2 - S_{1,M}^2. \]
\end{example}

\subsection{Chern class of wonderful compactification}
We compute the Chern class throughout the blow-up construction. The key steps involve tracking the normal bundles using Proposition~\ref{blowupnormal} and relating the exceptional divisors via Lemma~\ref{blowup pullback}.

Write \(W_L=X^{r-2}\to\cdots\to X^1\to X^0=\mathbb P^{r-1}\), where the map \(X^{i+1}\to X^i\) blows up the strict transforms of the linear subspaces \(\mathbb P L_F\) of dimension \(i\) (equivalently, flats of rank \(r-i-1\)). Consider blowing up a rank $r-i$ flat $F$ during the blow-up $X^{i} \rightarrow X^{i-1}$. Denote the blow-up center by $X_F$. By Proposition~\ref{blowupchern}, the difference of the Chern classes is given by     
\[ c(\widetilde{X}) - f^\ast(c(X^{i-1})) = j_{\ast}(g^\ast c(X_F) \cdot \gamma),\] where $\widetilde{X}$ is the result of the blow-up at $X_F$, and the notations of the maps coincide with the blow-up diagram \eqref{eq:blowup}.

The blow-up center $X_F$ is itself a wonderful compactification for the contracted matroid \(M_F\), which has rank \(i\). Its flats correspond to the flats \(G\supseteq F\) of \(M\), so we understand $c(X_F)$.

Initially, $N_{\mathbb{P}^{i - 1} / \mathbb{P}^{r - 1}} = \mathcal{O}(1)^{\oplus (r-i)}$. Applying Proposition~\ref{blowupnormal} iteratively shows that the normal bundle $N = N_{X_F / X^{i-1}} = \mathcal{O}(\alpha_{M_F})^{\oplus (r-i)} \otimes \mathcal{O}(-\sum_{F \subsetneq G \subsetneq E} x_G)$, where $x_G$ denotes the exceptional divisor coming from blowing-up $G$. Equivalently, $$N_{X_F / X^{i-1}} = \mathcal{O}(\alpha_{M_F} - \sum_{F \subsetneq G \subsetneq E} x_G)^{\oplus (r-i)}.$$

We have described the Chern class of the normal bundle of $X_F$ in $A^\ast (X_F)$. To apply Proposition \ref{blowupchern}, we need to know $j_\ast g^\ast c(N)$.  One might expect that for a flat $G \supsetneq F$, the corresponding Chern class in $A^\ast (X_F)$ should be related to the corresponding Chern class in $W_L$.  The following lemma explains their relation.

\begin{lemma} \label{blowup pullback}
Let \( X \subsetneq Y \subsetneq Z \) be smooth varieties. Let \( \pi: Z' = \mathrm{Bl}_X Z \to Z \) denote the blow-up of \( Z \) along \( X \), and let \( Y' = \mathrm{Bl}_X Y \subset Z' \) be the strict transform of \( Y \). Denote by \( E_X^Y \subset Y' \) the exceptional divisor of the blow-up \( Y' \to Y \), and \( E_X^Z \subset Z' \) the exceptional divisor of the blow-up \( Z' \to Z \). Consider the blow-up \( \widetilde{Z} = \mathrm{Bl}_{Y'} Z' \), with exceptional divisor \( E_{Y'} \subset \widetilde{Z} \). Let \( j: E_{Y'} \hookrightarrow \widetilde{Z} \) be the inclusion, and \( g: E_{Y'} \to Y' \) the projection. Notice that the following diagram coincides with the blow-up diagram \eqref{eq:blowup}:

\[
\begin{tikzcd}
E_{Y'} \arrow[r, hookrightarrow, "j"] \arrow[d, "g"'] &
\widetilde{Z} \arrow[d,  "f"] \\
Y' = \mathrm{Bl}_X Y \arrow[r, hookrightarrow, "i"] &  Z'= \mathrm{Bl}_X Z
\end{tikzcd}
\]
Then, in the Chow group \( A_*(E_{Y'}) \), we have:

\[
g^\ast (E_X^Y) = j^\ast f^\ast(E_X^Z).
\]
\end{lemma}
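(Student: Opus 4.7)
The plan is to reduce to the cleaner divisor-theoretic identity $i^\ast E_X^Z = E_X^Y$ of effective Cartier divisors on $Y'$. Once this is in hand, the commutativity of the blow-up square $f \circ j = i \circ g$ gives $j^\ast f^\ast = g^\ast i^\ast$ on Chow groups, and applying both sides to $E_X^Z$ yields
$j^\ast f^\ast E_X^Z = g^\ast i^\ast E_X^Z = g^\ast E_X^Y$,
which is the desired equality in $A_\ast(E_{Y'})$.

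To establish the key identity $i^\ast E_X^Z = E_X^Y$, I would work directly with ideal sheaves via the universal property of the blow-up. Writing $\pi\colon Z' \to Z$ and $\pi_Y\colon Y' \to Y$ for the blow-down maps and $\iota\colon Y \hookrightarrow Z$ for the original inclusion, the defining property of blow-ups gives $\mathcal{O}_{Z'}(-E_X^Z) = \pi^{-1}\mathcal{I}_X \cdot \mathcal{O}_{Z'}$ and $\mathcal{O}_{Y'}(-E_X^Y) = \pi_Y^{-1}\mathcal{I}_{X,Y} \cdot \mathcal{O}_{Y'}$, where $\mathcal{I}_{X,Y} = \iota^{-1}\mathcal{I}_X \cdot \mathcal{O}_Y$ is the ideal of $X$ inside $Y$. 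Using the commutativity $\pi \circ i = \iota \circ \pi_Y$, one then computes $i^\ast \mathcal{O}_{Z'}(-E_X^Z) = \pi_Y^{-1}(\iota^{-1}\mathcal{I}_X \cdot \mathcal{O}_Y) \cdot \mathcal{O}_{Y'} = \mathcal{O}_{Y'}(-E_X^Y)$, which translates into the claimed equality of Cartier divisors.

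The main obstacle is legitimizing the pullback $i^\ast E_X^Z$, i.e., confirming that $Y'$ is not contained in the support of $E_X^Z$. Geometrically this is transparent: $\pi_Y\colon Y' \to Y$ is surjective and $Y \not\subseteq X$, so the generic point of $Y'$ maps to the generic point of $Y$, which avoids the blow-up center. Once this is settled, the ideal-sheaf argument is formal; the remaining input is the identification of $Y'$ with $\mathrm{Bl}_X Y$ and the fact that the scheme-theoretic preimage of $X$ in either blow-up coincides with the exceptional divisor with multiplicity one, both standard consequences of the smoothness of the inclusions $X \subset Y \subset Z$ and already implicit in the setup of the lemma.
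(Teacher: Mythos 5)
Your proposal is correct, and its essential content coincides with the paper's proof: both hinge on the single key fact that the exceptional divisor of the bigger blow-up restricts, along the strict transform $Y'$, to the exceptional divisor of the smaller blow-up. The paper phrases this as the transversal intersection $Y' \cap E_X^Z = E_X^Y$ and then argues geometrically about strict transforms meeting $E_{Y'}$, whereas you phrase it as the Cartier-divisor identity $i^\ast E_X^Z = E_X^Y$ and then invoke the formal functoriality $j^\ast f^\ast = (f\circ j)^\ast = (i\circ g)^\ast = g^\ast i^\ast$ coming from the commutativity of the blow-up square. Your ideal-sheaf derivation of $i^\ast E_X^Z = E_X^Y$ — using the universal property $\mathcal O_{Z'}(-E_X^Z) = \pi^{-1}\mathcal I_X\cdot\mathcal O_{Z'}$, the identification $\iota^{-1}\mathcal I_X\cdot\mathcal O_Y = \mathcal I_{X,Y}$ (which needs the smoothness/regularity of $X\subset Y\subset Z$, as you implicitly use), and the check that $Y'\not\subseteq E_X^Z$ — actually supplies a proof of what the paper simply asserts as transversality, so your write-up is, if anything, slightly more complete. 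The trade-off is that the paper's strict-transform reasoning makes the geometry visible at a glance, while your route is more mechanical; both are valid and interchangeable.
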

\begin{proof}
Observe that \(Y'\subset Z'\) and the exceptional divisor \(E_X^Z\subset Z'\) intersect transversely, and
\[
Y' \cap E_X^Z = E_X^Y.
\]

Since \(Y'\) meets \(E_X^Z\) transversely, the strict transform of \(E_X^Z\) under \(f\) meets the exceptional divisor \(E_{Y'}\) along the preimage of \(E_X^Y\). Hence \(j^*f^*(E_X^Z)=g^*(E_X^Y)\).
\end{proof}

Here, $Y' = X_F$ and $Z' = X^{i-1}$. Hence for a monomial $t = \alpha_{M _F}^{t_0} S_{1, M_F}^{t_1} \cdots S_{i-2, M_F}^{t_{i-2}} \in A^\ast(X_F)$, we have $$g^\ast(t) = j^{\ast}\big(\alpha_{M}^{t_0} S_{1, M}^{t_1} \cdots S_{i-2, M}^{t_{i-2}}\big).$$

Finally, note that $ j^\ast (-x_F) = \mathcal{O}_{\mathbb{P}(N)}(-1) = -\zeta$. By the projection formula, we have $$j_\ast (\zeta^l g^\ast(t)) = x_F (-x_F)^{l} \alpha_{M}^{t_0} S_{1, M}^{t_1} \cdots S_{i-2, M}^{t_{i-2}}.$$

At the stage $X^{i} \rightarrow X^{i-1}$, the centers corresponding to flats of rank $r-i$ are pairwise incomparable (and hence disjoint), so they may be blown up simultaneously. The summation of the $(-1)^l(x_F)^{l + 1}$ then becomes $(-1)^lS_{i,M}^{l + 1}$. 

To see exactly how the formula in Theorem \ref{tangent bundle formula} emerges from Proposition \ref{blowupchern}, we translate the geometric data of the blow-up into our polynomial variables. Let $n = r-1$. At the $j$-th stage of the construction (where we blow up the centers corresponding to flats), the normal bundle $N$ has rank $d = n + 1 - j$. 

The first Chern class of the associated line bundle is represented by $x - \sum_{k=1}^{j-1} y_k$. Therefore, the total Chern class of the normal bundle is $c(N) = (1 + x - \sum_{k=1}^{j-1} y_k)^{n+1-j}$. Extracting the $(d-i)$-th Chern class of $N$ directly yields the binomial term in our sum:
$$g^\ast c_{d-i}(N) = \binom{n + 1 - j}{i}\Big(x-\sum_{k = 1}^{j - 1}y_k\Big)^{n + 1 - i - j}.$$

The other terms in the summand map directly to the remaining components of Proposition \ref{blowupchern}:
\begin{itemize}
    \item The recursive factor $T_{j-1}$ corresponds to $g^\ast c(X_F)$, the Chern class of the blow-up center.
    \item The algebraic factor $\big((1+y_j)(1-y_j)^i-1\big)$ is the result of applying the pushforward $j_\ast$ to the $\zeta$ terms in $\gamma$, which converts the formal expressions involving the exceptional divisor class $\zeta$ into our polynomial variable $y_j$. 
\end{itemize}

Summing these contributions over all blow-up stages $j$ and indices $i$ recovers the recursive formula. Therefore, Theorem \ref{tangent bundle formula} holds for realizable matroids.

To complete the proof, we must show that this formula holds for all matroids. For a fixed class $A \in A^*(X_E)$, both the function $M \mapsto \deg_M\left(i_M^*(A) \cdot T_{r-1}(M)\right)$ and the function $M \mapsto \deg_M\left(i_M^*(A) \cdot c(T_M)\right)$ are valuative by Proposition \ref{prop:valuative}. Since the functions are equal on realizable matroids, we obtain $$\deg(i_M^*(A) \cdot T_{r-1}(M)) = \deg\left(i_M^*(A) \cdot c(T_M)\right)$$ for \emph{all} matroids $M$ and \emph{all} $A \in A^*(X_E)$. By Poincaré duality and the fact that $i_M^*$ is surjective, we conclude that $T_{r-1}(M) = c(T_M)$ for all matroids $M$, completing the proof.

\section{Big and nef divisors}\label{sec:nef}

\subsection{Nef divisors}
In classical algebraic geometry, nefness is naturally defined via intersection numbers with curves. Because arbitrary matroids lack projective embeddings, we must define nefness combinatorially. We compare three increasingly weaker conditions for a divisor to be considered nef in the following theorem.

\begin{theorem}\label{thm:nef-definitions}
Let $M$ be a matroid of rank $r$ and write
\[
l=\sum_F l_F x_F \in A^1(M)
\]
for a degree-$1$ class (a divisor).  Consider the following properties of $l$:
\begin{enumerate}
  \item[(P1)] $l$ is the pullback of a nef divisor on \(X_E\); i.e., there exists \(\tilde l\in A^1(X_E)\) with \(\tilde l\) nef and \(i_M^*(\tilde l)=l\).
  \item[(P2)] For every flag \(\mathcal F=(F_1\subsetneq\cdots\subsetneq F_k)\) of proper flats (including the empty flag) there is an expression \(l=\sum_F c_F x_F\) with rational coefficients such that \(c_{F_i}=0\) for all \(i\) and \(c_F\ge0\) for every flat \(F\).
  \item[(P3)] For every flag \(\mathcal F=(F_1\subsetneq\cdots\subsetneq F_k)\) of proper flats (including the empty flag) there is an expression \(l=\sum_F c_F x_F\) with rational coefficients such that \(c_{F_i}=0\) for all \(i\) and \(c_F\ge0\) for every flat \(F\) with the property that \(\mathcal F\sqcup\{F\}\) is again a flag (i.e., $F$ can be inserted into the flag).
\end{enumerate}
Then \((\mathrm{P1})\implies(\mathrm{P2})\implies(\mathrm{P3})\), and both implications are strict.
\end{theorem}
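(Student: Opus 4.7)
The plan is to handle the two implications and the strictness claims separately. The implication $(\mathrm{P2})\Rightarrow(\mathrm{P3})$ is tautological: an expression $l=\sum c_F x_F$ with $c_F\ge 0$ on \emph{every} flat $F$ in particular has $c_F\ge 0$ on the flats insertable into a given flag $\mathcal{F}$, so every (P2)-certificate is automatically a (P3)-certificate and no argument is required.

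For $(\mathrm{P1})\Rightarrow(\mathrm{P2})$, I would translate the question to the toric geometry of $X_E$. Lift $l$ to a nef class $\tilde l\in A^1(X_E)$ and associate to it the piecewise linear support function $\psi_{\tilde l}$ on $N_{\mathbb R}=\mathbb R^E/\mathbb R(1,\dots,1)$, determined by $\psi_{\tilde l}(e_F)=-c_F$ whenever $\tilde l=\sum_F c_F x_F$ and extended linearly on each cone of $\Sigma_{U_n}$. On the smooth projective toric variety $X_E$, nefness of $\tilde l$ is equivalent to concavity (upper convexity) of $\psi_{\tilde l}$. Given a flag $\mathcal{F}=(F_1\subsetneq\cdots\subsetneq F_k)$ of proper flats of $M$, it spans a cone $\sigma_{\mathcal F}$ of $\Sigma_{U_n}$ on which $\psi_{\tilde l}$ is linear. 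Pick any global linear functional $\ell$ on $N_{\mathbb R}$ extending $\psi_{\tilde l}\big|_{\sigma_{\mathcal F}}$; then $\psi_{\tilde l}-\ell$ is still concave, vanishes identically on $\sigma_{\mathcal F}$, and is therefore $\le 0$ throughout $|\Sigma_{U_n}|$. Subtracting $\ell$ corresponds to adding a linear combination of the relations in the ideal $J$, so we obtain a new representative $\tilde l=\sum_F c'_F x_F$ of the same Chow class with $c'_{F_i}=0$ for every $F_i\in\mathcal{F}$ and $c'_F\ge 0$ for every proper non-empty subset $F\subsetneq E$. Applying $i_M^*$ kills the variables indexed by non-flats of $M$ and yields the required nonnegative expression of $l$ in $A^1(M)$, establishing (P2).

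For the strictness, I would exhibit small explicit counterexamples. To separate (P2) from (P3), pick a rank-$3$ matroid $M$ and a single-flat flag $\mathcal F=(F_1)$, and search for a divisor $l$ admitting a (P3)-certificate for $\mathcal F$ while every representation of $l$ with $c_{F_1}=0$ forces a strictly negative coefficient on at least one flat incomparable to $F_1$; a small rank-$3$ matroid should already display this phenomenon, and the obstruction is a finite linear-programming infeasibility certificate. To separate (P1) from (P2), produce an $l\in A^1(M)$ satisfying (P2) whose entire preimage $(i_M^*)^{-1}(l)\subset A^1(X_E)$ avoids the nef cone of $X_E$. Since $\ker i_M^*$ is generated by the $x_G$ with $G$ a non-flat proper subset, the set of lifts is an explicit affine subspace, and the nef cone of the permutohedral variety is cut out by the submodularity inequalities on $2^E$; emptiness of the intersection is again a finite linear-programming check.

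The main technical obstacle is the strictness of $(\mathrm{P1})\Rightarrow(\mathrm{P2})$. The forward implications are essentially standard---the toric dictionary in one case and a containment of constraints in the other---but ruling out nef lifts requires controlling an entire affine family of preimages against the nef cone of $X_E$. The delicate point is to choose a matroid with enough non-flats that $\ker i_M^*$ provides genuine freedom in choosing lifts, yet few enough non-flats to keep the finite-dimensional linear-programming verification transparent.
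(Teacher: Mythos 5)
The implication $(\mathrm{P2})\Rightarrow(\mathrm{P3})$ and the general shape of your $(\mathrm{P1})\Rightarrow(\mathrm{P2})$ argument are fine—the paper also routes $(\mathrm{P1})\Rightarrow(\mathrm{P2})$ through the toric positivity description of nef divisors on $X_E$ (citing a known proposition rather than reproving it)—but there are two genuine problems.

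First, your key step in $(\mathrm{P1})\Rightarrow(\mathrm{P2})$ is wrong as stated. You write ``Pick any global linear functional $\ell$ on $N_{\mathbb R}$ extending $\psi_{\tilde l}|_{\sigma_{\mathcal F}}$; then $\psi_{\tilde l}-\ell$ is \ldots concave, vanishes identically on $\sigma_{\mathcal F}$, and is therefore $\le 0$ throughout $|\Sigma_{U_n}|$.'' A concave piecewise-linear function that vanishes on a \emph{lower-dimensional} cone need not be $\le 0$ globally (every concave support function vanishes at the origin, for instance), and $\sigma_{\mathcal F}$ is never a maximal cone of $\Sigma_{U_n}$ unless $\mathcal F$ is a complete chain of subsets of $E$—in particular even a complete flag of flats of $M$ gives a cone of dimension only $r-1 < n-1$ whenever $M\ne U_n$. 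So ``any'' extension fails. The fix is to extend $\mathcal F$ to a maximal chain of subsets of $E$ (a maximal cone $\tau\supset\sigma_{\mathcal F}$ of $\Sigma_{U_n}$) and take $\ell$ to be the Cartier datum $m_\tau$ of $\tilde l$ on $\tau$; concavity then gives $\psi_{\tilde l}\le m_\tau$ globally and $\psi_{\tilde l}=m_\tau$ on $\tau\supset\sigma_{\mathcal F}$, after which your argument goes through verbatim.

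Second, you do not actually prove strictness—you only sketch a search strategy (``a small rank-$3$ matroid should already display this phenomenon,'' ``emptiness of the intersection is a finite linear-programming check''). Since the theorem explicitly asserts both implications are strict, this is a real gap. The paper exhibits explicit witnesses: for $(\mathrm{P3})\not\Rightarrow(\mathrm{P2})$ it takes $M=U_{3,6}$ and $l=x_1+x_2+2x_{12}+x_{14}+x_{25}+x_{16}+x_{26}$, and for $(\mathrm{P2})\not\Rightarrow(\mathrm{P1})$ it takes $M=U_{3,4}$ and $l=2\alpha-x_{23}-x_{24}-x_{13}-x_{14}$, the latter ruled out as a nef pullback via the submodularity inequalities cutting out the nef cone of $X_E$. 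Your instinct about where to look (rank $3$, uniform) is right, but you need to produce and verify concrete examples for the proof to be complete.
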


\begin{proof}
If \(\tilde l=\sum_S c_S x_S\) is nef on \(X_E\) then by \cite[Proposition~4.4.3]{Matt_thesis} its coefficients
satisfy the positivity property required in (P2); pulling back to \(A^*(M)\) preserves that property,
so (P1)\(\Rightarrow\)(P2). Clearly (P2)\(\Rightarrow\)(P3).

We give brief counterexamples to show the implications are strict.

\medskip\noindent\textbf{(P3) $\not\Rightarrow$ (P2).}  Let \(M=U_{3,6}\).  Choose
\[
l=x_1+x_2+2x_{12}+x_{14}+x_{25}+x_{16}+x_{26}.
\]
One checks directly (finite case check) that \(l\) satisfies (P3), but the coefficient
of \(x_{14}\) cannot be set to \(0\) without producing a negative coefficient elsewhere.

\medskip\noindent\textbf{(P2) $\not\Rightarrow$ (P1).}  Let \(M=U_{3,4}\) and
take
\[
l=2\alpha - x_{23}-x_{24}-x_{13}-x_{14} = x_1+x_2+2x_{12}=x_3+x_4+2x_{34}.
\]
Then \(l\) satisfies (P2): for any flag, at least one of the two forms $x_1+x_2+2x_{12}$ and $x_3+x_4+x_{34}$ avoids the flag (Conceptually, the divisor is ``basepoint free''.)  However \(l\) is not the pullback of a nef divisor on \(X_E\): a nef divisor \(\tilde c=\sum_S c_S x_S\) on \(X_E\) must satisfy the submodularity relations \(c_{I\cup J}+c_{I\cap J}\le c_I+c_J\) with \(c_\varnothing=c_E=0\) by convention (see \cite[Proposition~2.2.6]{BES23}).  Checking these inequalities for the lift of \(l\) leads to a contradiction (one finds some \(c_T<0\)), so (P2) does not imply (P1).
\end{proof}
\begin{definition}
A class \(l\in A^1(M)\) is \emph{combinatorially nef} if it satisfies (P3). It is
\emph{combinatorially ample} if, moreover, for each flag, one can arrange the representing
coefficients to be strictly positive on the divisors that extend the flag. When the meaning
is clear we drop the adjective ``combinatorially'' and simply say ``nef'' or ``ample''.
\end{definition}

\begin{lemma}\cite[Proposition~4.5]{AHK18}
The combinatorially nef and combinatorially ample classes form nonempty convex cones in \(A^1(M)_\mathbb{R}\), and the ample cone is the interior of the nef cone.
\end{lemma}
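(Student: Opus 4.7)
The plan is to deduce all three claims directly from the flag-by-flag characterization (P3) and its strict analogue, using that the set of flags of proper flats is finite. Convexity and closure under nonnegative scaling are immediate: given a flag $\mathcal{F}$ and (P3)-representations $l_1=\sum_F c^{(1)}_F x_F$ and $l_2=\sum_F c^{(2)}_F x_F$ for that flag, the combination $\lambda_1 l_1+\lambda_2 l_2$ (with $\lambda_i\geq 0$) inherits both the vanishing on $\mathcal{F}$ and the (strict) nonnegativity on compatible flats, and running this flag by flag gives (P3), or its strict version, for the sum. The nef cone is nonempty since it contains $0$, and also contains $\alpha$: for $\mathcal{F}=(F_1\subsetneq\cdots\subsetneq F_k)$ pick $i\in E\setminus F_k$ and use $\alpha=\sum_{F\ni i}x_F$; no $F_j$ contains $i$, so its coefficient vanishes, and the remaining coefficients are $0$ or $1$.

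The nonemptiness of the ample cone is the main obstacle, and I would attack it by exhibiting an explicit strictly submodular class. Take $c(S)=|S|\,(|E|-|S|)$, which vanishes on $\varnothing$ and on $E$ and satisfies $c(I\cup J)+c(I\cap J)<c(I)+c(J)$ for incomparable $I,J$, and set $l=\sum_{\text{proper }F}c(F)\,x_F$. For each flag $\mathcal{F}$, the linear relations $\alpha_i=\alpha_j$ can be combined to eliminate the coefficient on each $x_{F_j}$; one must take an \emph{averaged} family of such relations rather than any single one, so that the resulting adjustments on compatible flats do not individually wipe out any coefficient (as a small direct computation on $U_{3,3}$ already shows). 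The hard part is verifying that after this averaged elimination, strict submodularity of $c$, applied to the pairs of incomparable flats arising in the bookkeeping, forces the resulting coefficient on every flat compatible with $\mathcal{F}$ to be strictly positive; this can be done by expanding $l$ in a flag-adapted basis and tracking how $c(F)$ interacts with the averaged relations.

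With both cones shown to be nonempty convex cones, the identification \emph{ample $=$ interior of nef} follows by standard convex geometry. For ample $\subseteq$ interior, given ample $l$ and arbitrary $v\in A^1(M)_\mathbb{R}$, for each flag $\mathcal{F}$ fix an ample representation of $l$ whose positive coefficients are bounded below by some $\delta_{\mathcal{F}}>0$, and a representation of $v$ modified via $\alpha_i=\alpha_j$ to vanish on the flats of $\mathcal{F}$; for $\epsilon$ small enough, uniformly over the finite set of flags, the sum representation of $l+\epsilon v$ still satisfies the strict (P3) conditions, so $l$ lies in the interior. For the reverse, I model the nef cone as $\bigcap_{\mathcal{F}}V_\mathcal{F}$, where $V_\mathcal{F}=\phi(P_\mathcal{F})$ is the image of the polyhedral cone $P_\mathcal{F}$ cut out by $c_{F_i}=0$ and $c_F\geq 0$ for compatible $F$ under the linear surjection $\phi\colon\mathbb{R}^{\text{proper flats}}\to A^1(M)_\mathbb{R}$. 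Each $V_\mathcal{F}$ is full-dimensional since every class admits a representation vanishing on any prescribed flag (by iterated use of $\alpha_i=\alpha_j$), and the standard convex-geometry identity $\operatorname{int}(\bigcap V_\mathcal{F})=\bigcap\operatorname{int}(V_\mathcal{F})=\bigcap\phi(\operatorname{relint}(P_\mathcal{F}))$ for convex cones with a common interior point then recovers the ample cone as the interior of the nef cone.
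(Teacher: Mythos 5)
The paper does not prove this lemma itself; it simply cites \cite[Proposition~4.5]{AHK18}, where the result is established in the language of convex/strictly convex conewise linear functions on the Bergman fan. Your argument is a direct, self-contained alternative working entirely from the flag-by-flag characterization (P3) and its strict version, and as far as I can tell it goes through. The convexity argument is fine (run it flag by flag and add representations). For the interior identification, the modeling of the nef cone as $\bigcap_{\mathcal{F}} V_{\mathcal{F}}$ with $V_{\mathcal{F}}=\phi(P_{\mathcal{F}})$, combined with $\operatorname{relint}(\phi(P))=\phi(\operatorname{relint}(P))$ and the full-dimensionality of each $V_{\mathcal{F}}$, is correct; full-dimensionality follows because, for any flag $F_1\subsetneq\cdots\subsetneq F_k$, choosing $i_0\in F_1$, $i_j\in F_{j+1}\setminus F_j$, $i_k\in E\setminus F_k$ gives relations $\alpha_{i_{j-1}}-\alpha_{i_j}$ that eliminate the coefficients on the $F_j$ one at a time without disturbing each other, so every class admits a representation vanishing on the flag.

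The step you flag as ``the hard part'' — that the averaged elimination produces strictly positive coefficients on compatible flats for $c(S)=|S|(|E|-|S|)$ — does in fact work, and the cleanest way to see it is via strict concavity rather than literal submodularity. With the averaged relation $R_j=\frac{1}{n_j n_{j+1}}\sum_{i\in S_j,\,i'\in S_{j+1}}(\alpha_i-\alpha_{i'})$ (where $S_j=F_j\setminus F_{j-1}$, $n_j=|S_j|$), the coefficient of $x_G$ in $R_j$ is $|G\cap S_j|/n_j-|G\cap S_{j+1}|/n_{j+1}$, so $R_j$ has coefficient $1$ on $x_{F_j}$ and $0$ on the other flag members. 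After eliminating, a flat $G$ with $F_m\subsetneq G\subsetneq F_{m+1}$ acquires coefficient $c(G)-(1-t)c(F_m)-t\,c(F_{m+1})$, where $t=|G\cap S_{m+1}|/n_{m+1}\in(0,1)$; since $|G|=(1-t)|F_m|+t|F_{m+1}|$ and $x\mapsto x(|E|-x)$ is strictly concave, this is strictly positive. Your $U_{3,3}$ remark correctly identifies why a single un-averaged relation fails: it can zero out a compatible flat (e.g.\ $x_{13}$ for the flag $\{1\}$), whereas averaging over $\alpha_2$ and $\alpha_3$ does not. In short, your approach buys an elementary, fan-free proof directly from the combinatorial definition, at the cost of the explicit elimination bookkeeping; the AHK18 route packages the same positivity into the general theory of conewise linear functions.
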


\begin{example}
For any matroid \(M\) and any nonempty \(S\subseteq E\) the classes \(\alpha_S\) and \(\beta_S\)
are pullbacks from \(X_E\); in particular they satisfy (P1) and hence are nef.
\end{example}

\medskip

The following proposition explains the geometric meaning of (P2) and (P3) in the realizable case.

\begin{proposition} \label{prop: realizing nef}
Let \(M\) be realizable by \(L\subset \mathbb{K}^E\) and let \(l\in A^1(M)\) be a divisor.
\begin{enumerate}
  \item If \(l\) satisfies (P2) then \(l\) is semiample on \(W_L\) (a positive multiple is basepoint free).
  \item If \(l\) satisfies (P3) then \(l\) is nef on \(W_L\).
\end{enumerate}
\end{proposition}

\begin{proof}

\begin{enumerate}

\item If \(l\) admits, for every flag \(\mathcal F\), an expression with nonnegative coefficients
vanishing on the flag, then we can choose a nonnegative rational representative of \(l\) that avoids any point of $W_L$ lying in the intersection of the divisors corresponding to \(\mathcal F\). The union of these intersections of divisors is equal to \(W_L\), and by finiteness of the set of flags this implies a positive multiple of \(l\) is basepoint free.

\item If \(C\subset W_L\) is an irreducible curve then \(C\) is contained in the intersection of divisors corresponding to some flag \(\mathcal F\); by (P3) we can ensure that the coefficients of \(l\) on divisors containing that intersection are nonnegative, so \(l\cdot C\ge0\).
\end{enumerate}

\end{proof}
\begin{remark}
For an arbitrary (and possibly noncomplete) fan $\Delta$, Gibney and Maclagan \cite{Nef} defined three analogous cones of divisors $G_\Delta \subseteq L_\Delta \subseteq F_\Delta$ for the toric variety corresponding to $\Delta$, establishing that the inclusions between them are strict in general. In relation to Theorem~\ref{thm:nef-definitions}, the condition (P1) corresponds to membership in $G_\Delta$, while (P3) corresponds to $L_\Delta$.
\end{remark}
\subsection{Big and nef divisors} \label{sec:big and nef}

In algebraic geometry, the following four statements are equivalent (and characterize big and nef divisors).
\begin{enumerate} 
    \item $D$ is nef, and it can (rationally) be written as ample + effective.
    \item There is an effective divisor $Z$ such that, for all $k$ sufficiently large, $D - Z/k$ is ample.
    \item $D$ is nef, and $\deg(D^{r-1}) > 0$.
    \item $D$ is nef and in the interior of the effective cone.
\end{enumerate}

So far, we can define big and nef divisors for matroids by the third statement.
\begin{definition} \label{def:big and nef}
For a matroid $M$ of rank $r$, a combinatorially nef divisor $l$ is \emph{combinatorially big and nef} if $\deg (l^{r-1}) > 0$.  Again, we omit the notion combinatorially if the meaning is clear.
\end{definition}

It is tempting to define the \emph{combinatorially effective cone} for matroids, so that the big and nef divisors can be characterized by the equivalence of the following.  
\begin{goal}[the definition of ``combinatorially effective'' will have these properties be equivalent]    \label{equivalence of big and nef}
\quad
\begin{enumerate} 
    \item $D$ is nef, and it can (rationally) be written as ample + \emph{combinatorially effective}.
    \item There is a \emph{combinatorially effective} divisor $Z$ such that, for all $k$ sufficiently large, $D - Z/k$ is ample.
    \item $D$ is nef, and $\deg(D^{r-1}) > 0$.
    \item $D$ is nef and in the interior of the \emph{combinatorially effective} cone.
\end{enumerate}
\end{goal}
For the geometric background and intuition, we refer to \cite[Section 2.2]{positivity}
\begin{example} \label{Example:wrongcone}
The cone of divisors generated by nonnegative sums of ${x_F}$s, is not the correct one.

Consider the matroid $U_{3, n}$ for $n > 4$. The class $D = 2\alpha - x_{12} - x_{34} = \alpha_{12} + \alpha_{34}$ is nef, and $\deg(D^2) = 2^2 - 1 - 1 = 2$. However, $x_i$ for $i > 4$ cannot appear. Here, if we pick $E = x_5$, then $D - \epsilon E$ cannot be written as a nonnegative sum of ${x_F}$ for any positive $\epsilon$. Thus, (3) does not imply (4) in this setting.
\end{example}

Here is a proposal to make such a definition.

\begin{definition}
Let $M$ be a matroid of rank $r$.  The \emph{fake effective cone} $\mathcal F_M$
is the closed convex cone of classes
\[
\mathcal{F}_M \;=\; \{\, D\in A^1(M)_\mathbb{R} \;:\; \deg( D\cdot \ell_1\cdots\ell_{r-2} ) \ge 0
\ \text{for all combinatorially nef }\ell_1,\dots,\ell_{r-2}\,\}.
\]
A class $D\in\mathcal F_M$ is called \emph{fake effective}.
\end{definition}
\begin{theorem}\label{fake effective cone}
With the fake effective cone $\mathcal F_M$ in place, the four properties (1)--(4)
in Goal~\ref{equivalence of big and nef} are equivalent: for a divisor $D$ the
following are equivalent.
\begin{enumerate}
  \item $D$ is nef, and it can (rationally) be written as $D=A+Z$ with $A$ ample and $Z\in\mathcal F_M$.
  \item There exists $Z\in\mathcal F_M$ such that $D-\tfrac{1}{k}Z$ is ample for all sufficiently large integers $k$.
  \item $D$ is combinatorially nef and $\deg(D^{\,r-1})>0$.
  \item $D$ is combinatorially nef and lies in the interior of $\mathcal F_M$.
\end{enumerate}
\end{theorem}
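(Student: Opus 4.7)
The plan is first to show the easy convex-geometric equivalences $(1)\Leftrightarrow(4)$ and $(2)\Leftrightarrow(4)$; then to derive $(1)\Rightarrow(3)$ by an iterative expansion using the defining property of $\mathcal F_M$; and finally to close the loop with $(3)\Rightarrow(1)$ by a matroid-theoretic Kodaira-type lemma, which will be the main obstacle. Nefness of $D$ will be treated as implicit in $(1)$ (matching Goal~\ref{equivalence of big and nef}) and is automatic in $(2)$, since $D=\lim_{k\to\infty}(D-\tfrac{1}{k}E)$ is a limit of ample classes. The key background facts I shall use repeatedly are: combinatorially nef classes lie in $\mathcal F_M$ (by the intersection inequalities of Section~\ref{sec:nef}, part of Theorem~\ref{thm:main-positivity}(2)); combinatorially ample classes lie in the interior of $\mathcal F_M$, since they are interior to the smaller nef cone; and nef $+$ ample $=$ ample (the ample cone is the interior of the convex nef cone, so it absorbs anything in the nef cone).

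The convex-geometric implications are short. Given $D\in\operatorname{int}(\mathcal F_M)$, any sufficiently small ample class $\epsilon A$ has $N:=D-\epsilon A\in\mathcal F_M$, so $D=\epsilon A+N$ proves $(4)\Rightarrow(1)$, and taking $E:=N$ one computes
\[
D-\tfrac{1}{k}E\;=\;\bigl(1-\tfrac{1}{k}\bigr)D+\tfrac{\epsilon}{k}A,
\]
which is ample for every $k\ge 1$ as a positive combination of a nef class and an ample class, proving $(4)\Rightarrow(2)$. The reverse implications $(1)\Rightarrow(4)$ and $(2)\Rightarrow(4)$ both use that ample classes lie in $\operatorname{int}(\mathcal F_M)$ and that adding an element of $\mathcal F_M$ keeps one in the interior. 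For $(1)\Rightarrow(3)$, expand
\[
\deg(D^{r-1})\;=\;\deg(A\cdot D^{r-2})+\deg(N\cdot D^{r-2})\;\ge\;\deg(A\cdot D^{r-2})
\]
by fake-effectiveness of $N$ against the nef product $D^{r-2}$, and iterate the same decomposition $\deg(A^j\cdot D^{r-1-j})=\deg(A^{j+1}\cdot D^{r-2-j})+\deg(A^j\cdot N\cdot D^{r-2-j})$ until only powers of $A$ remain, arriving at $\deg(D^{r-1})\ge\deg(A^{r-1})>0$.

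The principal obstacle is $(3)\Rightarrow(1)$. Given $D$ nef with $\deg(D^{r-1})>0$, one must produce an ample class $A$ and $N\in\mathcal F_M$ with $D=A+N$. The plan is a matroid analogue of Kodaira's lemma: fix any ample $A_0$ and show $mD-A_0\in\mathcal F_M$ for $m\gg 0$, whence $D=\tfrac{1}{m}A_0+\tfrac{1}{m}(mD-A_0)$. This requires controlling, for all nonzero nef products $\ell_1\cdots\ell_{r-2}\in A^{r-2}(M)$, the ratio $\deg(A_0\cdot\ell_1\cdots\ell_{r-2})/\deg(D\cdot\ell_1\cdots\ell_{r-2})$ uniformly over a compact cross-section of the cone of such products. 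The main analytic input is the Khovanskii--Teissier inequality
\[
\deg(D\cdot \ell_1\cdots \ell_{r-2})^{r-1}\;\ge\;\deg(D^{r-1})\,\prod_{i=1}^{r-2}\deg(\ell_i^{r-1})
\]
for nef classes in the matroid Chow ring, which is a consequence of the Hodge--Riemann relations of \cite{AHK18}; this controls the ``nondegenerate'' part of the cross-section where each $\deg(\ell_i^{r-1})>0$. The delicate case is when some $\ell_i$ lies on the boundary of the nef cone with $\deg(\ell_i^{r-1})=0$, for which the Khovanskii--Teissier bound degenerates; I plan to handle it by an ample perturbation $\ell_i\mapsto \ell_i+\delta A_0$ and a limit as $\delta\to 0^+$, together with a bookkeeping argument guaranteeing that the nonvanishing of the product class in $A^{r-2}(M)$ is preserved in the limit. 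This perturbation-and-limit step is where the genuine technical work lies.
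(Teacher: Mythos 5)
Your convex-geometric implications are fine, and your telescoping argument for $(1)\Rightarrow(3)$ is in fact a little cleaner than the paper's version (the paper proves $(4)\Rightarrow(3)$ by expanding $(D-rA)(D+A)^{r-2}\ge 0$ and rearranging, whereas you peel off one factor of $D$ at a time; the two are morally the same). The serious issue is $(3)\Rightarrow(1)$, which you correctly identify as the principal obstacle but attack with the wrong tool.

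The forward Khovanskii--Teissier inequality
\[
\deg(D\cdot\ell_1\cdots\ell_{r-2})^{r-1}\;\ge\;\deg(D^{r-1})\prod_{i=1}^{r-2}\deg(\ell_i^{r-1})
\]
gives a \emph{lower} bound on $\deg(D\cdot\ell_1\cdots\ell_{r-2})$, but only in terms of $\prod_i\deg(\ell_i^{r-1})$; it says nothing comparing this product to $\deg(A_0\cdot\ell_1\cdots\ell_{r-2})$. What you actually need for $mD-A_0\in\mathcal F_M$ is a \emph{uniform} inequality
\[
\deg(A_0\cdot\ell_1\cdots\ell_{r-2})\;\le\;C\,\deg(D\cdot\ell_1\cdots\ell_{r-2})
\]
over all nef $\ell_i$. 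Forward KT is structurally the wrong shape for this: its lower bound degenerates to $0$ as soon as one $\ell_i$ hits the boundary of the nef cone with $\deg(\ell_i^{r-1})=0$, and there the numerator $\deg(A_0\cdot\ell_1\cdots\ell_{r-2})$ may well remain bounded away from $0$. Perturbing $\ell_i\mapsto\ell_i+\delta A_0$ does not save you, because the KT lower bound still goes to $0$ as $\delta\to0^+$, and the compactness/continuity bookkeeping you allude to would require exactly the strict-positivity and closedness facts that themselves rest on something stronger than forward KT.

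The ingredient the paper uses is the \emph{reverse} Khovanskii--Teissier inequality for Lorentzian polynomials (Corollary~\ref{weak ineq}, coming from Theorem~\ref{thm:rkt}). Applied with $\ell=D$, $\ell_1=A$, it gives directly
\[
\deg(D^{r-1})\,\deg(A\cdot\ell_2\cdots\ell_{r-1})\;\le\;c\,\deg(D^{r-2}A)\,\deg(D\cdot\ell_2\cdots\ell_{r-1})
\]
for \emph{all} nef $\ell_2,\dots,\ell_{r-1}$, with $c$ depending only on $r$. Since $\deg(D^{r-1})>0$ by hypothesis, dividing yields the needed uniform comparison and hence $D-\epsilon A\in\mathcal F_M$ for small $\epsilon$, with no perturbation or limiting step at all. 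This is a genuinely different inequality, not a refinement of what you were using: reverse KT bounds a single degree $\deg(A\cdot\ell_2\cdots\ell_{r-1})$ \emph{above} by a product of mixed degrees involving $D$, which is precisely the missing upper bound your plan lacks. I would replace the whole perturbation scheme with a direct invocation of Corollary~\ref{weak ineq}.
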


To prove this, we will apply a version of the reverse Khovanskii-Tessier inequality, and we will need the notion of Lorentzian polynomials. For definition and properties on Lorentzian polynomials, we refer to \cite{BH20}.

\begin{proposition} \cite[Theorem 8.9]{AHK18}
Let $M$ be a matroid of rank $r$, and $\ell_1, \ldots, \ell_n$ be combinatorially ample divisors. Then the function $$f(x_1, \ldots, x_n) = \frac{1}{(r-1)!}\deg\big((x_1\ell_1 + \cdots + x_n \ell_n)^{r-1} \big)$$ is strictly Lorentzian. $f$ is called the \emph{volume polynomial}.
\end{proposition}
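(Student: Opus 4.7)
The goal is to show the volume polynomial
\[
f(x_1,\ldots,x_n) \;=\; \tfrac{1}{(r-1)!}\,\deg_M\!\bigl((x_1\ell_1+\cdots+x_n\ell_n)^{r-1}\bigr)
\]
is strictly Lorentzian. Unwinding the definition, a homogeneous polynomial of degree $d=r-1$ is strictly Lorentzian exactly when (i) every coefficient of a degree-$d$ monomial is strictly positive and (ii) for every multi-index $\alpha$ with $|\alpha|=d-2$, the Hessian of $\partial^\alpha f$ is a symmetric matrix with exactly one positive eigenvalue. My plan is to verify (i) via Kleiman-type positivity for matroid ample classes and to deduce (ii) as a direct consequence of the mixed Hodge--Riemann relations in degree one for matroid Chow rings (the principal theorem of \cite{AHK18}).

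Expanding multinomially, the coefficient of $x^\alpha$ in $f$ with $|\alpha|=r-1$ is $\tfrac{1}{\alpha!}\deg_M(\ell^\alpha)$, where $\ell^\alpha=\prod_i\ell_i^{\alpha_i}$. Each $\ell_i$ is combinatorially ample, so $\ell^\alpha$ is an intersection of $r-1$ ample classes. The Kleiman-type positivity statement for matroids (a consequence of ampleness being the interior of the nef cone combined with Poincar\'e duality, as established in \cite{AHK18}) gives $\deg_M(\ell^\alpha)>0$. Hence (i) holds.

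For (ii), direct differentiation gives
\[
\partial_{i_1}\cdots\partial_{i_{r-3}} f(x) \;=\; \tfrac{1}{2}\,\deg_M\!\bigl(\ell_{i_1}\cdots\ell_{i_{r-3}}\cdot L(x)^2\bigr), \qquad L(x):=\sum_k x_k\ell_k,
\]
so the associated Hessian matrix $H$ has entries $H_{jk}=\deg_M(\ell_{i_1}\cdots\ell_{i_{r-3}}\,\ell_j\,\ell_k)$. Let $\varphi:\mathbb{R}^n\to A^1(M)_{\mathbb{R}}$ denote the linear map $e_j\mapsto\ell_j$. Then $H$ is the pullback $\varphi^{*}B$ of the symmetric bilinear form
\[
B(u,v)\;:=\;\deg_M\bigl(u\cdot v\cdot \ell_{i_1}\cdots\ell_{i_{r-3}}\bigr)
\]
on $A^1(M)_{\mathbb{R}}$. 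The mixed Hodge--Riemann relations for matroid Chow rings \cite[Theorem 1.6]{AHK18} assert that when $\ell_{i_1},\ldots,\ell_{i_{r-3}}$ are combinatorially ample, $B$ has exactly one positive eigenvalue on $A^1(M)_{\mathbb{R}}$. Pullback by $\varphi$ cannot increase the positive index, so $H$ has at most one positive eigenvalue. On the other hand, for any $j$, Kleiman-type positivity gives $H_{jj}=\deg_M(\ell_j^2\,\ell_{i_1}\cdots\ell_{i_{r-3}})>0$, so $H$ does have at least one positive direction. Thus $H$ has exactly one positive eigenvalue, verifying (ii).

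The main obstacle is the mixed Hodge--Riemann relation in degree one for matroid Chow rings: it is the deep technical engine of \cite{AHK18}, proved by a simultaneous induction with hard Lefschetz and Poincar\'e duality across a chain of matroidal flips. Taking that relation as a black box, the rest of the argument is a routine Hessian computation plus a restriction-of-bilinear-forms eigenvalue count; the only bookkeeping is translating between the intrinsic picture on $A^1(M)_{\mathbb{R}}$ and the pullback picture on $\mathbb{R}^n$.
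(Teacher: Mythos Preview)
The paper does not give its own proof of this proposition; it is quoted from \cite{AHK18} (whose Hodge--Riemann package predates the Lorentzian terminology of \cite{BH20}, so the statement here is really a translation). Your derivation is exactly the standard passage from degree-one Hodge--Riemann relations to the Lorentzian property and is essentially correct.

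One genuine gap: ``strictly Lorentzian'' in \cite{BH20} requires each Hessian $H$ to be \emph{nonsingular} with signature $(1,n-1)$, not merely to have exactly one positive eigenvalue. Your argument establishes the latter but not the former. Indeed, if the $\ell_i$ are linearly dependent in $A^1(M)_{\mathbb R}$ then your map $\varphi$ has a kernel, $H=\varphi^*B$ is degenerate, and $f$ is Lorentzian but not strictly so; the proposition as written tacitly assumes linear independence of the $\ell_i$ (the paper only ever uses the non-strict consequence). Under that assumption, nondegeneracy of $H$ does follow: since $B$ has Lorentzian signature and $\ell_j$ is timelike ($B(\ell_j,\ell_j)>0$), the $B$-orthogonal complement of $\ell_j$ is negative definite, so $B$ restricted to any subspace containing $\ell_j$ is nondegenerate. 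You should add this sentence.

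A smaller point: the Hodge--Riemann relations actually proved in \cite{AHK18} are for a \emph{single} ample class $\ell$ (the form $(u,v)\mapsto\deg(uv\,\ell^{r-3})$). The mixed version you invoke, with $\ell_{i_1}\cdots\ell_{i_{r-3}}$, needs the standard continuity argument: hard Lefschetz keeps $B$ nondegenerate as the ample classes vary, and the ample cone is connected, so the signature is constant. You flag this as a black box, which is reasonable, but the specific citation to Theorem~1.6 points to the unmixed statement.
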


In particular, for nef divisors $\ell_1, \cdots, \ell_{r-1}$, we have $\deg(\ell_1\cdots \ell_{r-1}) \geq 0$.

\begin{theorem}\cite[Theorem 2.5]{JJ} \label{thm:rkt}
Let $f$ be a degree $d$ Lorentzian polynomial with $n$ variables. Then for any $x \in \mathbb{R}_{\geq 0}^n$ and for any $\alpha, \beta, \gamma \in \mathbb{N}^n$ satisfying $\alpha = \beta + \gamma$, $|\alpha| \leq d$, we have $$f(x)\partial^\alpha f(x) \leq c_{\alpha, \beta, \gamma, d} \;\;\partial ^\beta f(x) \partial ^ \gamma f(x) $$ for a positive constant $c_{\alpha, \beta, \gamma, d}$ determined by $\alpha, \beta, \gamma$, and $d$. \end{theorem}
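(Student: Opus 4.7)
The plan is to prove the inequality by induction, with base case $|\alpha|=2$ (so $|\beta|=|\gamma|=1$), which is essentially the Alexandrov--Fenchel property built into the definition of a Lorentzian polynomial. The central device is polarization: write $f$ as the restriction to the diagonal of a symmetric $d$-linear form $F$, so that for any multi-index $\sigma$ with $|\sigma|\leq d$,
$$
\partial^\sigma f(x) = \frac{d!}{(d-|\sigma|)!}\,F\bigl(e_\sigma, x^{(d-|\sigma|)}\bigr),
$$
where $e_\sigma$ lists the standard basis vectors with multiplicities from $\sigma$ and $x^{(k)}$ means $k$ copies of $x$. The bilinear form $B(u,v):=F(u,v,x^{(d-2)})$ then has Lorentzian signature by the defining property of $f$, with $B(x,x)=f(x)$.

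For the base case, assume $f(x)>0$ (otherwise the left-hand side vanishes trivially). The reverse Cauchy--Schwarz inequality $B(x,w)^2\geq B(x,x)B(w,w)$ for Lorentzian-signature forms with $B(x,x)>0$, applied with $w=se_i+te_j$, makes the quadratic form $Q(s,t):=B(x,w)^2-B(x,x)B(w,w)$ positive semidefinite in real $(s,t)$. Its discriminant condition unpacks to
$$
\bigl((d-1)\partial_i f\,\partial_j f - d\,f\,\partial_i\partial_j f\bigr)^2 \;\leq\; \bigl((d-1)(\partial_i f)^2 - d\,f\,\partial_i^2 f\bigr)\bigl((d-1)(\partial_j f)^2 - d\,f\,\partial_j^2 f\bigr).
$$
Both factors on the right are nonnegative (the $i=j$ case of the same reverse Cauchy--Schwarz) and each is bounded above by $(d-1)(\partial_i f)^2$ and $(d-1)(\partial_j f)^2$ respectively. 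Taking square roots and choosing the right sign yields $f(x)\partial_i\partial_j f(x)\leq \tfrac{2(d-1)}{d}\,\partial_i f(x)\partial_j f(x)$, the base case with explicit constant.

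For the inductive step with $|\alpha|\geq 3$, the strategy is to apply the base case not to $f$ itself but to suitable iterated partial derivatives, which remain Lorentzian as a standard closure property. For any $\mu\in\mathbb{N}^n$ with $\mu\geq e_i+e_j$, applying the base case to the Lorentzian polynomial $h:=\partial^{\mu-e_i-e_j}f$ of degree $d-|\mu|+2$ yields the four-term inequality
$$
\partial^{\mu-e_i-e_j}f\cdot\partial^\mu f \;\leq\; C\,\partial^{\mu-e_i}f\cdot\partial^{\mu-e_j}f,
$$
with $C$ depending only on $d-|\mu|$. Starting from the trivial split $(0,\alpha)$, chain such four-term moves through intermediate splittings in the poset of decompositions $\alpha=\sigma+\tau$, supplemented when necessary by univariate Newton inequalities obtained by restricting $\partial^\sigma f$ to lines through $x$ in coordinate directions, until reaching the target split $(\beta,\gamma)$. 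The accumulated product of constants gives the desired $c_{\alpha,\beta,\gamma,d}$.

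The hardest part will be the combinatorial bookkeeping of this chaining argument: designing a valid path in the splitting poset along which every step is one of the atomic four-term or univariate Newton inequalities, and verifying that the accumulated constant depends only on $\alpha,\beta,\gamma,d$ rather than on $f$ or $x$. A secondary technical issue is that intermediate quantities $\partial^\sigma f(x)$ may vanish, which obstructs naive divisions along the chain; this can be circumvented either by restricting to the open positive orthant (where strict Lorentzianness makes all such quantities strictly positive) and extending by continuity, or by approximating $f$ with strictly Lorentzian polynomials — a density result standard in the Br\"and\'en--Huh theory — and passing to the limit in the inequality.
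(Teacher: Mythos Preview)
The paper does not prove this theorem: it is quoted verbatim from the external reference \cite{JJ} (with the parenthetical remark ``The constant is described in the paper'') and used as a black box to derive Corollary~\ref{weak ineq}. There is therefore no proof in the present paper to compare your proposal against.

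That said, your sketch is a plausible outline of how such a result is typically established. The base case computation is correct: the polarization identity and the Lorentzian-signature reverse Cauchy--Schwarz do give $f\,\partial_i\partial_j f \le \tfrac{2(d-1)}{d}\,\partial_i f\,\partial_j f$ as you derive. The inductive chaining idea is also the right strategy, and you correctly identify its weak point: you have not actually exhibited a path in the splitting poset from $(0,\alpha)$ to $(\beta,\gamma)$ along which every step is one of your atomic inequalities, nor argued that such a path always exists. This is not mere bookkeeping---it is the substance of the argument, and without it the proposal is a proof strategy rather than a proof. If you want to complete it, you should either consult \cite{JJ} directly for the combinatorial construction, or give an explicit recursive scheme (e.g., peel off one coordinate of $\gamma$ at a time, reducing to the case $|\gamma|=1$, and handle that case by a separate induction on $|\beta|$).
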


(The constant is described in the paper.) Differentiating the volume polynomial with respect to $x_i$ is like intersecting with $\ell_i$, by the chain rule. In particular, using the fact that nef divisors are the limit of ample divisors, we have the following.

\begin{corollary} \label{weak ineq}
Let $M$ be a matroid of rank $r$, and let $m, n, k$ be nonnegative integers such that $r - 1 \geq k = m + n$. For any combinatorially nef divisors $\ell, \ell_1 \ldots, \ell_k$, there exists a constant $c_{m, n, r} > 0$ that only depends on $m, n, r$, such that $$\deg(\ell^{r-1}) \deg(\ell^{r-1-k}\ell_1\cdots\ell_{k}) \leq c_{m, n, r}\deg(\ell^{r-1-m}\ell_1 \cdots \ell_{m})\deg(\ell^{r-1-n}\ell_{m+1}\cdots \ell_{k}).$$
\end{corollary}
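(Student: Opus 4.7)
The plan is to apply the reverse Khovanskii--Teissier inequality (Theorem~\ref{thm:rkt}) to the volume polynomial
\[
f(x_0, x_1, \ldots, x_k) = \frac{1}{(r-1)!}\deg\bigl((x_0\ell + x_1\ell_1 + \cdots + x_k\ell_k)^{r-1}\bigr) = \sum_{|a|=r-1}\frac{\deg(\ell^{a_0}\ell_1^{a_1}\cdots\ell_k^{a_k})}{a_0!\,a_1!\cdots a_k!}\,x_0^{a_0}\cdots x_k^{a_k},
\]
a polynomial of degree $d=r-1$ in the $k+1$ variables $x_0,\ldots,x_k$, and to evaluate the inequality at the single point $x=(1,0,\ldots,0)$. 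When $\ell,\ell_1,\ldots,\ell_k$ are combinatorially ample, $f$ is strictly Lorentzian by \cite[Theorem~8.9]{AHK18}, so Theorem~\ref{thm:rkt} applies directly.

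The key computation I would carry out is that for any multi-index $\alpha=(0,\alpha_1,\ldots,\alpha_k)$ with $\alpha_0=0$ and $|\alpha|\le r-1$, substituting $x_0=1$ and $x_i=0$ for $i\ge 1$ forces $a_i=\alpha_i$ for every $i\ge 1$ in the surviving monomials, which gives
\[
\partial^\alpha f(1,0,\ldots,0) = \frac{1}{(r-1-|\alpha|)!\,\alpha_1!\cdots\alpha_k!}\deg\bigl(\ell^{r-1-|\alpha|}\ell_1^{\alpha_1}\cdots\ell_k^{\alpha_k}\bigr).
\]
I would then take
\[
\alpha=(0,\underbrace{1,\ldots,1}_{k}),\qquad \beta=(0,\underbrace{1,\ldots,1}_{m},\underbrace{0,\ldots,0}_{n}),\qquad \gamma=(0,\underbrace{0,\ldots,0}_{m},\underbrace{1,\ldots,1}_{n}),
\]
which satisfy $\alpha=\beta+\gamma$ and $|\alpha|=k\le r-1$. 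The four quantities $f(1,0,\ldots,0)$, $\partial^\alpha f(1,0,\ldots,0)$, $\partial^\beta f(1,0,\ldots,0)$, $\partial^\gamma f(1,0,\ldots,0)$ are then precisely the four degree products appearing in the statement, each divided by an explicit factorial depending only on $r,m,n$. Plugging into Theorem~\ref{thm:rkt} and clearing the factorials yields the claimed inequality with
\[
c_{m,n,r} := c_{\alpha,\beta,\gamma,d}\cdot\frac{(r-1)!\,(r-1-k)!}{(r-1-m)!\,(r-1-n)!},
\]
which depends only on $m,n,r$ since $k=m+n$.

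The only wrinkle is that Theorem~\ref{thm:rkt} is stated for Lorentzian polynomials while the corollary assumes the divisors to be only combinatorially nef; I would handle this by perturbing each class to $\ell+\varepsilon A$ for a fixed combinatorially ample $A$ (such a perturbation is ample because the ample cone is the interior of the nef cone), applying the inequality with the same constant to each perturbation, and passing to the limit $\varepsilon\to 0$ using continuity of the degree pairing. I do not expect a real obstacle here; the substance of the corollary is simply matching the combinatorial quantities against Theorem~\ref{thm:rkt} at the correct point and bookkeeping the factorials.
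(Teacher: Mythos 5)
Your proof is correct and takes the same route the paper sketches: apply Theorem~\ref{thm:rkt} to the volume polynomial and pass to the nef limit by approximating with ample classes; you have simply spelled out the bookkeeping at the point $(1,0,\ldots,0)$. One small slip: the displayed formula for $\partial^\alpha f(1,0,\ldots,0)$ should read $\frac{1}{(r-1-|\alpha|)!}\deg(\ell^{r-1-|\alpha|}\ell_1^{\alpha_1}\cdots\ell_k^{\alpha_k})$, since the $\alpha_i!$ produced by differentiation cancel the $\alpha_i!$ in the coefficient; this is harmless here because your $\alpha,\beta,\gamma$ are $0$--$1$ vectors, and the constant $c_{m,n,r}$ you compute is correct.
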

For a smooth projective variety $X$, after replacing the notion `combinatorially nef' with `nef', the inequality in Corollary \ref{weak ineq} is called the reverse Khovanskii-Teissier (rKT) inequality for a better constant $c_{m, n, r} = \binom{m+n}{m}$.

\begin{proof}[Proof of Theorem \ref{fake effective cone}]
We prove the equivalences by a series of implications.

\medskip\noindent\textbf{(4)\(\Rightarrow\)(1).} 
Pick an ample $A$. Since $D$ is inside the interior of $\mathcal{F}_M$, $D - \epsilon A$ is fake effective for $\epsilon$ small enough.

\medskip\noindent\textbf{(1)\(\Rightarrow\)(4).} 
Write $D = A + Z$ where $A$ is ample and $Z$ is fake effective. For a divisor $B$ and a small enough $\epsilon$, $D - \epsilon B = (A - \epsilon B) + Z$ is (ample) + (fake effective), which is fake effective.

\medskip\noindent\textbf{(1)\(\Rightarrow\)(2).} Since $D = A + Z$, we have $kD-Z = (k-1)D+A$. Because $D$ is nef and $A$ is ample, this sum is ample.

\medskip\noindent\textbf{(2)\(\Rightarrow\)(1).} $D - \frac{1}{k}Z$ is ample immediately implies $D$ can be written as (ample) + (fake effective).

\medskip\noindent\textbf{(3)\(\Rightarrow\)(4).} Suppose $D$ satisfies (3), and let $A$ be an ample divisor.

The inequality in Corollary \ref{weak ineq} tells us that for any nef divisors $\ell_2, \ldots, \ell_{r-2}$:
$$\deg (D^{r-1}) \deg(A\ell_2 \cdots \ell_{r-2}) \leq c  \deg(D^{r-2} A) \deg (D \ell_2 \cdots \ell_{r-2}).$$

After scaling $A$ (for example let $\deg (D^{r-2} A) = 1$), the inequality means $\deg (D \ell_2 \cdots \ell_{r-2}) \geq C_2 \deg (A \ell_2 \cdots \ell_{r-2})$ is true for all nef $\ell_2, \ldots, \ell_{r-2}$. Therefore, $D-\epsilon A$ is inside the fake effective cone for small enough $\epsilon$, and $D$ satisfies (4).

\medskip\noindent\textbf{(4)\(\Rightarrow\)(3).} Suppose $D$ is nef and in the interior of the fake effective cone. Then, after scaling there exists an ample divisor $A$ such that $(D-rA)$ is fake effective and $(D+A)$ is ample, where $r$ is the rank of the matroid. By definition $\deg((D-rA)(D+A)^{r-2}) \geq 0$. Therefore, 

\begin{align*}
0 \leq \deg\left((D-rA)(D+A)^{r-2}\right)      &= \deg\left((D-rA)(D^{r-2} + (r-2)D^{r-3}A +  \cdots + A^{r-2})\right) \\
 &= \deg(D^{r-1} - 2D^{r-2}A - \cdots - rA^{r-1})
\end{align*}

That means $\deg(D^{r-1}) > 0$, and $D$ satisfies (3).

\end{proof}
\begin{remark}
Similarly, for a smooth projective variety $X$ we can define its fake effective cone 
\[
\mathcal{F} \;=\; \{\, D\in A^1(X)_\mathbb{R} \;:\; \deg( D\cdot \ell_1\cdots\ell_{r-2} ) \ge 0
\ \text{for all nef }\ell_1,\dots,\ell_{r-2}\,\}.
\]

Every $\mathbb{Q}$-effective divisor is inside $\mathcal{F}$, but $\mathcal{F}$ can be strictly larger than the actual effective cone. For example, let $X$ be the blow-up of $\mathbb{P}^3$ at four general points, and consider the divisor $D = H - E_1-E_2-E_3-E_4$, where $H$ is the pullback of the hyperplane class and the $E_i$ are the exceptional divisors. We claim that $D$ is in $\mathcal{F}$ but is not $\mathbb{Q}$-effective.

If $mD$ is effective for a positive integer $m$, we have a homogeneous degree $m$ polynomial $f(x_1, x_2, x_3,x_4)$ having multiplicity at least $m$ at each of the four points, and we may assume that they are the coordinate vertices. For $f$ to have multiplicity $m$ at $[1:0:0:0]$, $f(1, x_2, x_3, x_4)$ must consist only of terms of degree $\ge m$, and hence $x_1$ does not appear in $f$ at all. Applying this to the other three vertices forces $x_2, x_3,$ and $x_4$ to be absent, which is impossible. Therefore, $D$ is not $\mathbb{Q}$-effective.

However, $\deg(D \cdot \ell_1 \cdot \ell_2) \ge 0$ for any two nef divisors $\ell_1, \ell_2$. Write $\ell_k = a_k H - \sum_{i=1}^4 b_{k,i} E_i$. Because $\ell_k$ is nef, its intersection with a general line in $X$ and with a line in $E_i \cong \mathbb{P}^2$ must be nonnegative, which implies $a_k \ge 0$ and $b_{k,i} \ge 0$. Furthermore, $\ell_k$ must intersect the strict transform of the line through any two exceptional points nonnegatively, yielding $a_k \ge b_{k,i} + b_{k,j}$ for all $i \neq j$. 

The intersection product evaluates to $\deg(D \cdot \ell_1 \cdot \ell_2) = a_1 a_2 - \sum_{i=1}^4 b_{1,i} b_{2,i}$. Assuming that $b_{1,1} \ge b_{1,2} \ge b_{1,3} \ge b_{1,4} \ge 0$, we can bound the sum:
$$
\sum_{i=1}^4 b_{1,i} b_{2,i} \le b_{1,1}(b_{2,1}+b_{2,2}) + b_{1,3}(b_{2,3}+b_{2,4}) \le (b_{1,1}+b_{1,3})a_2 \le a_1 a_2.
$$

Thus, the intersection is nonnegative, and $D$ is fake effective.

For a smooth projective variety $X$, the equivalence of (1), (2), (3), and (4) in Goal \ref{equivalence of big and nef} still holds for the fake effective cone via the same proof. Therefore, the interior of the fake effective cone and the interior of the effective cone coincide when restricting to the nef case.

Hence, the ``correct'' definition of the effective divisors for matroids should strictly contain the cone generated by the $x_F$'s (cf. Example \ref{Example:wrongcone}), and be contained inside the fake effective cone.
\end{remark}

\subsection{Matroid Kawamata--Viehweg vanishing conjecture} 

Let $X$ be a smooth projective variety of dimension $d$ over $\mathbb{C}$, and let $\mathcal{L}$ be a line bundle on $X$. If $\mathcal{L}$ is big and nef, the Kawamata--Viehweg vanishing theorem tells us $H^i(X, \mathcal{L}^{-1}) = 0$ for $i < d$. In particular, $$(-1)^{d} \chi(X, \mathcal{L}^{-1}) \geq 0.$$

This motivates the following conjecture.

\begin{conjecture}[Matroid Kawamata--Viehweg Vanishing] \label{KV-vanishing}
Let $M$ be a matroid of rank $r$, and let $\ell$ be a big and nef divisor. If $[\mathcal{L}] \in K(M)$ is the line bundle with $c_1([\mathcal{L}]) = \ell$, then $$(-1)^{r-1}\chi(-[\mathcal{L}]) \geq 0.$$
\end{conjecture}

By Proposition \ref{prop: realizing nef}, the conjecture automatically holds when $M$ is realizable over $\mathbb{C}$. Furthermore, it is shown in \cite{KVsurface} that the Kawamata--Viehweg vanishing theorem holds for rational surfaces of characteristic $p$. Hence, Conjecture \ref{KV-vanishing} is true when $r=3$ and the matroid is realizable. This gives us more evidence that the conjecture could be true for general matroids.

In \cite{EL}, the authors proved the following result (they proved something stronger).

\begin{theorem}\cite[Theorem 1.5]{EL} \label{h vector} Let $M$ be a matroid and $[\mathcal{L}] = \sum_F c_F[\mathcal{L}_F] \in K(M)$ be a line bundle where the $c_F$ are nonnegative integers (cf. Section \ref{subsec: K Chern}). Let $d$ be the numerical dimension of $c_1([\mathcal{L}])$ (i.e., the largest integer $t$ such that $c_1([\mathcal{L}])^t \neq 0$ in the Chow ring). Then $$(-1)^d\chi(-[\mathcal{L}]) \geq 0.$$ \end{theorem}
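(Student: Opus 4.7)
The plan is to reduce to realizable matroids over $\mathbb{C}$ via valuativity, and in that case invoke positivity of semiample divisors on $W_L$. For $M$ realized by $L\subset\mathbb{C}^E$, recall that $\alpha_F$ equals the pullback to $W_L$ of the hyperplane in $\mathbb{P}L$ through the center $\mathbb{P}L_F$; in particular $\alpha_F$ is base-point-free, so every class $D=\sum_F c_F\alpha_F$ with $c_F\ge0$ is a semiample divisor on the smooth projective variety $W_L$. Since $D$ is semiample, its numerical dimension $d$ coincides with its Iitaka dimension, and for $m\gg 0$ we have $mD=\phi^*H$ where $\phi\colon W_L\to Y$ is the Iitaka fibration and $H$ is ample on $Y$ with $\dim Y = d$.

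In the realizable case, I would establish $H^i(W_L,\mathcal{O}(-D))=0$ for $i\ne d$ in two steps. The vanishing for $i<d$ follows from the generalized Kawamata--Viehweg theorem applied to the nef $\mathbb{Q}$-divisor $D$ (which gives $H^i(K_{W_L}+D)=0$ for $i>\dim W_L-d$) together with Serre duality. For $i>d$ I would use Koll\'ar-type higher direct image vanishing $R^q\phi_*\mathcal{O}_{W_L}=0$ for $q>0$, which holds because $\phi$ is the Iitaka fibration of a semiample divisor on a smooth variety (hence its generic fiber has at worst rational singularities); combined with the projection formula and Leray this reduces $H^i(W_L,-mD)$ to $H^i(Y,-H)$, which vanishes outside degree $d$ by Kodaira. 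Together these give $(-1)^d\chi(-mD)=h^d(W_L,\mathcal{O}(-mD))\ge 0$ for $m\gg 0$; to descend from $mD$ back to $D$ itself I would use that the vanishing for $i<d$ already applies to $D$ directly, so $(-1)^d\chi(-D)$ equals an alternating sum $\sum_{i\ge d}(-1)^{i-d}h^i(-D)$ whose control again uses the Leray argument with $R^q\phi_*\mathcal{O}(-D)$ supported in degree $0$.

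To extend to arbitrary matroids I would invoke Proposition~\ref{prop:linear} and write $\mathbbm{1}_{P_M}$ as a $\mathbb{Z}$-linear combination of polytope indicators of realizable Schubert matroids of the same rank. The function $M\mapsto \chi(-D)$ is valuative by \cite[Lemma 6.4]{LLPP2024}, and for fixed nonnegative coefficients $c_F$ the intersection numbers $\deg_M(c_1(D)^t)$ are also valuative functions of $M$ of the form controlled by Proposition~\ref{prop:valuative}, so the numerical dimension $d$ is determined by valuative combinatorial data of $M$ and not by a choice of realization.

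The main obstacle is precisely that $d=d(M)$ can jump as $M$ varies across a matroid-polytope decomposition, so the signed quantity $(-1)^{d(M)}\chi_M(-D)$ is not manifestly valuative, and inequalities (unlike identities) do not transport across linear relations of polytope indicators. The cleanest way around this -- consistent with the label \emph{h vector} attached to the reference -- is to identify $(-1)^d\chi(-D)$ intrinsically with a specific coefficient in the $h$-vector of a shellable simplicial complex built from $M$ and the support of $D$, so that nonnegativity becomes structural rather than sign-by-sign. Absent such a combinatorial model, one has to stratify the matroids appearing in the decomposition by the value of $d$ and prove the inequality separately on each stratum, which I expect to be the main technical difficulty in making the valuative reduction rigorous.
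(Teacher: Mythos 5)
This theorem is quoted directly from \cite[Theorem 1.5]{EL}; the paper does not prove it, so there is no internal proof to compare against. Evaluating your proposal on its own terms, the realizable-case sketch is plausible but leaves real gaps: the Iitaka base $Y$ may be singular, so invoking Kodaira vanishing on $Y$ needs a rational-singularities or Grauert--Riemenschneider argument, and the higher-direct-image vanishing $R^q\phi_*\mathcal{O}_{W_L}=0$ for $q>0$ needs justification (say, via the general fiber being rational with $H^q(\mathcal{O})=0$). The decisive problem, though, is the one you flag yourself. Proposition~\ref{prop:linear} writes $\mathbbm{1}_{P_M}=\sum_i a_i\mathbbm{1}_{P_{M_i}}$ with integer coefficients $a_i$ of both signs, and valuativity propagates only \emph{identities}, never \emph{inequalities}. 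Knowing $(-1)^{d(M_i)}\chi_{M_i}(-D)\ge 0$ for every realizable $M_i$ tells you nothing about the sign of $\chi_M(-D)=\sum_i a_i\chi_{M_i}(-D)$, let alone of $(-1)^{d(M)}\chi_M(-D)$ once the exponent varies from term to term. Your proposed stratification by $d$ does not rescue this: the obstruction is the signs of the $a_i$, not the jumping of $d$, and no regrouping of a signed sum yields the one-sided bound you need. So the only route your proposal offers to non-realizable $M$ is dead on arrival.

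Your guess that \cite{EL} instead identifies $(-1)^d\chi(-D)$ with a manifestly nonnegative combinatorial quantity (an $h$-vector entry of a shellable complex, hence the label) is consistent with the theorem's name, but you do not construct the complex, prove shellability, or identify the coefficient, so the proposal is an incomplete sketch rather than a proof. For context, the paper itself never confronts this gap: it simply cites \cite{EL}, and later points to a strengthening by Eur, Fink, and Larson \cite{Vanishhindex}.
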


One might expect Theorem \ref{h vector} to be true for all line bundles $[\mathcal{L}]$ when $c_1([\mathcal{L}])$ is nef. However, the inequality can fail for general classes even when $c_1([\mathcal{L}])$ satisfies (P2).  We record a family of counterexamples.

\begin{example}
Let \(M=U_{3,2k}\) and write \(I=\{1,\dots,k\}\), \(J=\{k+1,\dots,2k\}\).  Consider
\[
\ell \;=\; k\alpha \;-\;\sum_{i\in I,\,j\in J} x_{ij}.
\]
One checks that \(\ell^2=0\), so the numerical dimension of \(\ell\) is \(1\). Letting $[\mathcal{L}]$ be the line bundle with $c_1([\mathcal{L}]) = \ell$,  one obtains
\[
\chi(-[\mathcal{L}]) \;=\; \deg\bigl((1-\ell)\,\operatorname{Todd}_M\bigr)
    \;=\;
    \frac{k^2-3k+2}{2}.
\]
Hence for \(k>3\) we have \((-1)^1\chi(-[\mathcal{L}])=-\tfrac{k^2-3k+2}{2}<0\), so the
expected sign property fails for this divisor even though \(\ell\) satisfies (P2).
\end{example}

Fix a matroid $M$ of rank $r$. If $\ell$ is a nef divisor, then $\alpha + \ell$ is big and nef. Let $[\mathcal{L}]$ be the line bundle with $c_1([\mathcal{L}]) = \ell$. Conjecture \ref{KV-vanishing} predicts that $$(-1)^{r-1}\chi(-[\mathcal{L}]-[\mathcal{L}_E]) \geq 0.$$

Recalling \eqref{Euler char exceptional}, we may compute $$\chi(-[\mathcal{L}]-[\mathcal{L}_E]) = \deg\left(\zeta_M(-[\mathcal{L}]) \zeta_M(-[\mathcal{L}_E])(1+\alpha+\alpha^2+ \cdots)\right) = \deg(\zeta_M(-[\mathcal{L}])),$$ as $\zeta_M(-[\mathcal{L}_E]) = 1 - \alpha$.

This motivates the following statement. 

\begin{conjecture}[weaker version of Matroid Kawamata--Viehweg Vanishing] \label{KV vanishing weak}
Let $M$ be a rank $r$ matroid, let $\ell$ be a nef divisor, and let $[\mathcal{L}]$ be the line bundle with $c_1([\mathcal{L}]) =\ell$. Then $$(-1)^{r-1}\deg\left(\zeta_M(-[\mathcal{L}])\right) \geq 0.$$
\end{conjecture}

\begin{definition} \label{def: P class}
Fix a matroid $M$ of rank $r$. For $0 \leq i \leq r- 1$, we define the sets
\begin{enumerate}
    \item $N_i \subset A^i(M)$ to be the set of elements $x \in A^i(M)$ that can be expressed as a product of $i$ nef divisors.
    \item $P_i \subset A^i(M)$ to be the set of elements $x \in A^i(M)$ that can be expressed as a nonnegative linear combination of elements in $N_i$.
\end{enumerate} 

We then define a set of divisors $\mathcal{P} \subset A^1(M)$ to consist of those divisors $\ell$ such that $(-1)^i$ times the degree-$i$ part of $\zeta_M(-[\mathcal{L}])$ lies in $P_i$ for all $i$, where $[\mathcal{L}]$ is the line bundle with $c_1([\mathcal{L}]) = \ell$.
\end{definition}

\begin{theorem} \label{thm:subcase of the conjecture}
The set $\mathcal{P}$ satisfies the following properties.
\begin{enumerate}
    \item $\mathcal{P}$ is contained in the set of nef divisors.
    \item $\mathcal{P}$ is closed under addition.
    \item Conjecture \ref{KV vanishing weak} holds for divisors in $\mathcal{P}$.
\end{enumerate}
\end{theorem}
\begin{proof}
\begin{enumerate}
    \item  
    
    Suppose that $\ell \in \mathcal{P}$ and $[\mathcal{L}]$ is the corresponding line bundle. The degree-1 part of $\zeta_M([\mathcal{L}_F])$ is $\alpha_F$, and every divisor is a $\mathbb{Z}$-linear combination of $\alpha_F$'s. Therefore, the degree-1 part of $\zeta_M([\mathcal{L}])$ is precisely $\ell = c_1([\mathcal{L}])$. Thus, $\ell$ is a nonnegative linear combination of nef divisors, which is nef.
    \item If $\ell_1, \ell_2 \in \mathcal{P}$ with $[\mathcal{L}_1], [\mathcal{L}_2]$ being the corresponding line bundles, then $\zeta_M(-[\mathcal{L}_1]- [\mathcal{L}_2]) = \zeta_M(-[\mathcal{L}_1]) \cdot \zeta_M(-[\mathcal{L}_2])$, and $\ell_1 + \ell_2 \in \mathcal{P}$. 
    \item If $\ell \in \mathcal{P}$ with $[\mathcal{L}]$ being the corresponding line bundle, then $(-1)^{r-1}\deg\left(\zeta_M(-[\mathcal{L}])\right)$ is a nonnegative linear combination of products of nef divisors, and therefore nonnegative.
\end{enumerate}
\end{proof}

It is natural to ask the following question.

\begin{question}
When does a nef divisor belong to $\mathcal{P}$?
\end{question}
In fact, the question is resolved for rank 3 matroids. Let $M$ be a rank 3 matroid, and $\ell$ be a nef divisor with the corresponding line bundle $[\mathcal{L}]$. Since the degree-1 part of $\zeta_M([\mathcal{L}])$ is $\ell$, we only need to check the degree-2 part, and $\ell \in \mathcal{P}$ if and only if Conjecture \ref{KV vanishing weak} holds for $\ell$.
\begin{theorem} \label{thm: rank3KV}
Let $M$ be a rank 3 matroid. Then $\mathcal{P}$ is exactly the set of nef divisors. Equivalently, Conjecture \ref{KV vanishing weak} holds for rank 3 matroids.
\end{theorem}
The rest of the section is devoted to proving Theorem \ref{thm: rank3KV}.
\begin{proposition}
Let $M$ be a rank 3 matroid, and let $\ell \in A^1(M)$ be a divisor with $[\mathcal{L}]$ the corresponding line bundle. Then $$\zeta_M([\mathcal{L}]) = 1 + \ell + \frac{\ell(\ell + \alpha - S_{1,M})}{2}.$$ (cf. Definition \ref{Sidef})
\end{proposition}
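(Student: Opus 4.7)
The plan is to exploit multiplicativity to reduce the identity to a direct verification on the generators $\alpha_F$ of $A^1(M)$, and then to a single Chow-ring identity.

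Set $\Phi(\ell) := 1 + \ell + \frac{\ell(\ell + \alpha - S_{1,M})}{2}$. First I would check that $\Phi(\ell_1 + \ell_2) = \Phi(\ell_1)\Phi(\ell_2)$ in $A^*(M)$. Since $A^{\geq 3}(M) = 0$ in rank $3$, only degrees $\le 2$ matter; the degree $0$ and $1$ parts match immediately, and the degree $2$ part of the product expands to $\ell_1 \ell_2 + \tfrac{1}{2}\ell_1(\ell_1 + \alpha - S_{1,M}) + \tfrac{1}{2}\ell_2(\ell_2 + \alpha - S_{1,M})$, which rearranges to $\tfrac{1}{2}(\ell_1+\ell_2)(\ell_1 + \ell_2 + \alpha - S_{1,M})$, as required. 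Multiplicativity of $\zeta_M$ is immediate, since $\zeta_M \colon K(M) \to A^*(M)$ is a ring homomorphism and $[\mathcal{O}(\ell_1 + \ell_2)] = [\mathcal{O}(\ell_1)]\cdot[\mathcal{O}(\ell_2)]$ in $K(M)$.

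Next, the classes $\{\alpha_F : F \text{ flat of } M\}$ (allowing $F = E$, so that $\alpha_E = \alpha$) $\mathbb{Z}$-span $A^1(M)$: for a rank $2$ flat $F$ one has $x_F = \alpha - \alpha_F$; for a rank $1$ flat $F$ the identity $\alpha - \alpha_F = x_F + \sum_{G \supsetneq F,\ \operatorname{rk}(G)=2} x_G$ then solves for $x_F$ once the rank $2$ terms are expressed. By multiplicativity of both sides, it therefore suffices to verify the formula at $\ell = \alpha_F$ for every flat $F$. Since $\zeta_M(\alpha_F) = \tfrac{1}{1-\alpha_F} = 1 + \alpha_F + \alpha_F^2$ (using $\alpha_F^3 = 0$ in rank $3$), the desired equality $\Phi(\alpha_F) = \zeta_M(\alpha_F)$ collapses to the single identity
\[
\alpha_F\bigl(\alpha - S_{1,M} - \alpha_F\bigr) = 0.
\]

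The bulk of the remaining work is this Chow-ring identity, handled by case on $\operatorname{rk}(F)$. When $\operatorname{rk}(F) = 2$, $\alpha_F = \alpha - x_F$ and $\alpha - S_{1,M} - \alpha_F = x_F - S_{1,M}$; using that $\alpha \cdot x_G = 0$ for any rank $2$ flat $G$ (in rank $3$ there is no proper flat strictly containing $G$, so expanding $\alpha$ with respect to some $i \notin G$ gives only incomparable contributions) together with $x_F x_G = 0$ for distinct rank $2$ flats, every cross-term vanishes. When $\operatorname{rk}(F) = 1$, substituting $\alpha_F = \alpha - x_F - \sum_{G \supsetneq F,\ \operatorname{rk}(G)=2} x_G$ and $\alpha - S_{1,M} - \alpha_F = x_F - \sum_{H \not\supseteq F,\ \operatorname{rk}(H)=2} x_H$ reduces matters to $\alpha_F \cdot x_F = 0$ and $\alpha_F \cdot x_H = 0$ for rank $2$ flats $H \not\supseteq F$; expanding $\alpha$ with respect to an element $i \in F$ yields $\alpha \cdot x_F = x_F^2 + \sum_{G \supseteq F,\ \operatorname{rk}(G)=2} x_F x_G$, which cancels the other terms, and the remaining products vanish by incomparability. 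The main obstacle is conceptual rather than computational: once one recognizes that both $\zeta_M$ and the proposed $\Phi$ are multiplicative and that the $\alpha_F$ generate $A^1(M)$, the rest is an elementary check with the Chow-ring relations.
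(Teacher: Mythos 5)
Your proposal is correct and follows essentially the same strategy as the paper's (very terse) proof: observe that both $\zeta_M$ and the proposed formula are multiplicative in $\ell$, then verify the formula directly on the generators $\alpha_F$ of $A^1(M)$. You have simply spelled out the Chow-ring verifications that the paper leaves implicit.
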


\begin{proof}
The formula is verified directly for $[\mathcal{L}_F]$ and $-[\mathcal{L}_F]$. 
Since it equals $e^{(1+\alpha - S_{1,M})\ell}$ and is multiplicative, 
it extends to all integral linear combinations of $\pm \alpha_F$, 
which generate the group of divisors.
\end{proof}

Thus, it remains to show that $\deg\left(\ell(\ell - \alpha +  S_{1,M})\right) \geq 0$ for a nef divisor $\ell$.

\begin{lemma} \label{lem: big and nef nonzero divisor}

Let $M$ be a matroid of rank $r$, and suppose $\ell_1, \ell_2, \ldots, \ell_k$ are combinatorially nef divisors with $k \leq r - 1$. If $\deg(\ell_1^{r-1}) > 0$, then $\ell_1\ell_2 \cdots \ell_k = 0$ implies $\ell_2 \cdots \ell_k = 0$.
\end{lemma}
\begin{proof}
In Corollary \ref{weak ineq}, if $k = r - 1$, we have $$\deg(\ell^{r-1}) \deg(\ell_1\cdots\ell_{r-1}) \leq c_{m, n, r} \deg(\ell^{r-1-m}\ell_1 \cdots \ell_m)\deg(\ell^{r-1-n}\ell_{m+1}\cdots \ell_{r-1}).$$

Suppose $\ell$ is a big and nef divisor, and $\deg(\ell^{r-1-m}\ell_1 \cdots \ell_m) = 0$. This would force $\deg(\ell_1\cdots\ell_{r-1}) = 0$ for all nef divisors $\ell_{m+1} \cdots \ell_{r-1}$. Since the ample cone is nonempty and open, every divisor can be written as the difference of two ample divisors, and we conclude that $\ell_{1}\cdots \ell_{m} = 0$.  
\end{proof}
\begin{lemma}
Let $M$ be a rank 3 matroid, and let $\ell \in A^1(M)$ be a nonzero nef divisor. Then there exist a positive integer $a$, a nonnegative integer $n$, positive integers $b_1, \ldots, b_n$, and rank 2 flats $F_1, \ldots, F_n$ such that $$\ell = a\alpha - \sum_{i=1}^n b_i x_{F_i}.$$
\end{lemma}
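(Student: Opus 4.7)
\medskip

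\noindent\emph{Proof plan.} The plan is to extract a convenient $\mathbb{Z}$-basis of $A^1(M)$ and then read off the sign information from two well-chosen applications of combinatorial nefness. In rank $3$ the proper flats have rank $1$ or $2$; for each rank $1$ flat $P$, picking $i\in P$ in the defining linear relation $\alpha=\sum_{F\ni i}x_F$ yields $x_P=\alpha-\sum_{L\supset P} x_L$, where $L$ runs over rank $2$ flats. Hence $\{\alpha\}\cup\{x_L:L\text{ a rank }2\text{ flat}\}$ generates $A^1(M)$ over $\mathbb{Z}$, and since its cardinality $n_2+1$ matches the $\mathbb{Q}$-dimension of $A^1(M)_{\mathbb{Q}}$ (equal to $n_1+n_2-(n_1-1)$, after subtracting the $n_1-1$ independent $\alpha_i=\alpha_j$ relations, where $n_k$ counts rank $k$ flats), it is a $\mathbb{Z}$-basis of the torsion-free group $A^1(M)$. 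Every divisor therefore admits a unique expansion $\ell=a\alpha+\sum_L c_L x_L$ with $a,c_L\in\mathbb{Z}$, and the lemma reduces to proving $a>0$ and $c_L\le 0$ whenever $\ell\ne 0$ is combinatorially nef.

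The main step is to show $c_L\le 0$. Fix a rank $2$ flat $L$ and apply property (P3) of Theorem~\ref{thm:nef-definitions} to the singleton flag $\mathcal{F}=\{L\}$. A proper flat $F\ne L$ can be inserted into $\{L\}$ iff it is comparable to $L$, which in rank $3$ forces $F$ to be a rank $1$ flat contained in $L$. Thus (P3) yields an expression $\ell=\sum_F d_F x_F$ with $d_L=0$ and $d_P\ge 0$ for every rank $1$ flat $P\subset L$ (the other $d_F$'s are unrestricted). Substituting $x_P=\alpha-\sum_{L'\supset P} x_{L'}$ for every rank $1$ $P$ appearing in the sum and collecting, the coefficient of $x_L$ becomes $-\sum_{P\subset L} d_P\le 0$; by uniqueness of the basis expansion this is exactly $c_L$, so $c_L\le 0$.

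For $a>0$, apply (P3) to the empty flag to obtain $\ell=\sum_F d'_F x_F$ with every $d'_F\ge 0$. The same substitution gives $a=\sum_P d'_P\ge 0$. If $a=0$, every $d'_P$ vanishes, so the resulting expression yields $c_L=d'_L\ge 0$ for each rank $2$ flat $L$; combined with the previous step this forces every $c_L=0$, whence $\ell=0$, contradicting the hypothesis. Thus $a$ is a positive integer, and setting $b_L:=-c_L\ge 0$ and retaining only the rank $2$ flats with $b_L>0$ as $F_1,\dots,F_n$ gives the required form $\ell=a\alpha-\sum_{i=1}^n b_i x_{F_i}$. The only subtle ingredient is the $\mathbb{Z}$-basis claim; once that is in hand, the rest is a clean double application of (P3), and no intersection-theoretic input (such as $\deg(\ell^2)\ge 0$) is required.
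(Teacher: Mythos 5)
Your proof is correct, and it takes a genuinely different route from the paper's. The paper determines $a$ as the intersection number $\deg(\alpha\cdot\ell)$, proves $a>0$ by invoking Lemma~\ref{lem: big and nef nonzero divisor} (which rests on the reverse Khovanskii--Teissier inequality for Lorentzian polynomials), and then reads off $b_L = \deg(\ell\cdot x_L)\ge 0$ from the fact that a nef class times a flag monomial is a nonnegative combination of flag monomials. You instead bypass all the intersection-theoretic positivity and extract everything from two applications of the definition (P3): the flag $\{L\}$ forces the coefficient of $x_L$ in the basis expansion to be $\le 0$ after the substitution $x_P=\alpha-\sum_{L'\supseteq P}x_{L'}$, and the empty flag gives $a\ge 0$, with $a>0$ forced by $\ell\ne 0$ once the sign of the $c_L$ is known. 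You also make explicit the $\mathbb{Z}$-basis claim for $\{\alpha\}\cup\{x_L\}$ (which the paper uses only implicitly when it asserts $\ell-a\alpha$ is a combination of rank-$2$ classes), and this is what makes the integrality of $a$ and the $b_i$ transparent. One small remark: torsion-freeness of $A^1(M)$ is not actually needed for the basis argument --- a finitely generated abelian group of rank $m$ generated by $m$ elements is automatically free on those generators --- but invoking it is harmless. The tradeoff is that your version is more elementary and self-contained for rank $3$, while the paper's argument via $\deg(\alpha\cdot\ell)$ and $\deg(\ell\cdot x_L)$ is the one that suggests how the statement should generalize, since (P3) with longer flags interacts with a basis in a more intricate way in higher rank.
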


\begin{proof}
The divisor $\alpha$ is big and nef. By Lemma \ref{lem: big and nef nonzero divisor}, we have $\deg(\alpha \cdot \ell) \in \mathbb{Z}_{>0}$. For two rank 1 flats $F_i, F_j$, the difference $x_{F_i} - x_{F_j}$ can be expressed as a linear combination of $x_F$ where $F$ ranges over rank 2 flats. Hence, $\ell - \deg(\alpha \cdot \ell) \alpha$ is a linear combination of $x_F$ with $F$ of rank 2. Setting $a = \deg(\alpha \cdot \ell)$, we obtain $$\ell = a\alpha - \sum_{i=1}^n b_i x_{F_i}.$$ 

Moreover, $\deg(\ell \cdot x_{F_i}) = b_i\geq 0$, which completes the proof.
\end{proof}

\begin{proof}[Proof of Theorem \ref{thm: rank3KV}]
Let $\ell = a\alpha - \sum_{i=1}^n b_i x_{F_i}$ be a nef divisor. We want to show $$\deg\left(\ell(\ell - \alpha +  S_{1,M})\right) \geq 0.$$

We compute
\[
\deg(\ell^2) = a^2 - \sum_{i=1}^n b_i^2, \quad 
\deg(\ell \cdot \alpha) = a, \quad 
\deg(\ell \cdot  S_{1,M}) = \sum_{i=1}^n b_i.
\]
Therefore, $$\deg\left(\ell(\ell - \alpha +  S_{1,M})\right) = a(a-1) -\sum^{n}_{i=1} b_i(b_i-1).$$

Since $\ell$ is nef, we have $\deg(\ell^2) = a^2 - \sum_{i=1}^n b_i^2 \geq 0$. This inequality implies
\[
a(a-1) - \sum_{i=1}^n b_i(b_i-1) \geq 0.
\]
Indeed, by adjoining additional $1$’s to the sequence $(b_i)$ (which do not affect $b_i(b_i-1)$), we may assume $a^2 = \sum_{i=1}^n b_i^2$. In this case, $a \leq \sum_{i=1}^n b_i$, which yields the desired inequality.
\end{proof}
In summary, we have verified Conjecture~\ref{KV vanishing weak} for rank $3$ matroids. For the stronger Conjecture~\ref{KV-vanishing}, however, the inequality becomes
\[
(a-1)(a-2) - \sum_{i=1}^n b_i(b_i-1) \geq 0,
\]
which need not hold without incorporating the combinatorial structure of the matroid. To the author's knowledge, this conjecture remains open even in rank $3$. 

\begin{remark}The same proof shows that Lemma \ref{lem: big and nef nonzero divisor} generalizes to smooth projective varieties. This result also yields interesting consequences in the matroid setting. Let $M$ be a matroid of rank $r$, and let $\ell_1, \ldots, \ell_k$ be nef divisors with $k < r$. Because $\alpha$ and $\beta$ are big and nef, if $\ell_1 \cdots \ell_k \neq 0$, then $\deg(\alpha^{r-i-k}\beta^{i-1}\ell_1 \cdots \ell_k) > 0$ for any $1 \leq i \leq r - k$. Consequently, if we express the non-zero product $\ell_1 \cdots \ell_k$ as a nonnegative linear combination of $x_{F_1} \cdots x_{F_k}$, there must exist a flag in $\mathcal{H}_{\{i, \ldots, i+k-1\}}$ (cf. Definition~\ref{definition of mathcal H}) with a strictly positive coefficient. This naturally raises the question: what types of flags of flats are guaranteed to appear in products of nef divisors?\end{remark}
\section{Properties of the \texorpdfstring{$\beta$}{beta} classes} \label{sec:Equations}

While the $\alpha_F$ classes corresponding to flats are fundamental to the intersection theory of matroids, they represent only one part of the geometric picture. In this section, we introduce the $\beta_S$ classes—associated with arbitrary non-empty subsets $S$ of the ground set $E$—and establish their core properties as natural geometric and combinatorial analogues to the $\alpha_F$ classes. 

We structure our investigation into three main areas:
\begin{itemize}
    \item \textbf{Exceptional isomorphisms:} We analyze the behavior of $\beta_S$ under the exceptional isomorphism $\zeta_M$.
    \item \textbf{Numerical dimensions:} We compute the numerical dimensions of the $\beta_S$ classes, establishing that intersection products of $\beta$ classes are strictly positive if and only if their corresponding subsets satisfy the dragon-Hall-Rado condition.
    \item \textbf{Euler characteristics and positivity:} We investigate the Euler characteristics of $\beta$ classes and prove specific cases of Conjecture \ref{beta conjecture}.
\end{itemize}

\subsection{\texorpdfstring{Exceptional isomorphism for $\beta$ classes}{Exceptional isomorphism for beta classes}}

The classes $[\mathcal{L}_F] \in K(M)$ correspond to divisors $\alpha_F$, and the exceptional isomorphism $\zeta_M$ sends $[\mathcal{L}_F]$ to $1 + \alpha_F + \alpha_F^2 + \cdots$. We first establish the analogous statement for the $\beta_S$ classes.

\begin{definition}
Let $M$ be a matroid on the ground set $E$. For a nonempty subset $S \subseteq E$, let $[\mathcal{K}_S] \in K(M)$ denote the line bundle with $c_1([\mathcal{K}_S]) = \beta_S$.
\end{definition}

Recall that for a matroid $M$ on a ground set $E$ and an element $i \in E$, there is a deletion map (Lemma \ref{lem:deletion}) $\theta_i: A^\ast(M\setminus i) \rightarrow A^\ast(M)$ given by the projection map between the toric varieties (see \cite[Proposition 3.1]{BHM}). Therefore, there is also a deletion map $\widetilde{\theta}_i: K(M \setminus i) \rightarrow K(M)$ between the $K$-rings, and the map commutes with the Chern class map and $\theta_i$ (see \cite[Theorem 3.2(d)]{Fulton}):

\[
\begin{tikzcd}
K(M \setminus i) \arrow[r, rightarrow, "\widetilde{\theta}_i"] \arrow[d, "c_t"'] &
K(M) \arrow[d,  "c_t"] \\
A^*(M \setminus i) \arrow[r, rightarrow, "\theta_i"] & A^*(M)
\end{tikzcd}
\]
 
\begin{lemma} \label{lem:deletion commutes with zeta}
Let $M$ be a matroid on the ground set $E$, and let $i \in E$. The deletion map $\theta_i$ commutes with the $\zeta$ map. More precisely, we have:
\[
\zeta_M \circ \widetilde{\theta}_i \;=\; \theta_i \circ \zeta_{M \setminus i}.
\]
\end{lemma}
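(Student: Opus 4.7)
The plan is to verify the identity by checking on a generating set. Both compositions $\zeta_M\circ\widetilde{\theta_i}$ and $\theta_i\circ\zeta_{M\setminus i}$ are ring homomorphisms $K(M\setminus i)\to A^*(M)$: the maps $\theta_i$ and $\widetilde{\theta_i}$ arise as pullbacks along a toric projection morphism, hence are ring homomorphisms, and $\zeta_M,\zeta_{M\setminus i}$ are ring isomorphisms by construction. Since $K(M\setminus i)$ is generated as a ring by the tautological line-bundle classes $[\mathcal{L}_F^{M\setminus i}]$ ranging over nonempty flats $F$ of $M\setminus i$, it suffices to verify the identity on each such generator.

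The crux of the proof is to identify $\widetilde{\theta_i}([\mathcal{L}_F^{M\setminus i}])$ explicitly in $K(M)$. Because $\widetilde{\theta_i}$ is the geometric pullback of a line bundle, its image is a line-bundle class $[\mathcal{L}]$ in $K(M)$. The commutativity of $\widetilde{\theta_i}$ with the total Chern class (the diagram preceding the lemma) implies $c_1(\mathcal{L}) = \theta_i(\alpha_F^{M\setminus i})$, and since $\operatorname{Pic}(X_M)\cong A^1(M)$ on the smooth toric variety $X_M$, this first Chern class determines the line bundle up to isomorphism. Thus identifying $\widetilde{\theta_i}([\mathcal{L}_F^{M\setminus i}]) = [\mathcal{L}_F^M]$ reduces to the combinatorial identity
\[
\theta_i(\alpha_F^{M\setminus i}) = \alpha_F^M \qquad \text{in } A^1(M).
\]

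To establish this identity, I will expand $\alpha_F^{M\setminus i} = \alpha^{M\setminus i} - \sum_{G\supseteq F} x_G^{M\setminus i}$ (sum over proper flats $G$ of $M\setminus i$), apply the defining formula $\theta_i(x_G^{M\setminus i}) = x_G^M + x_{G\cup\{i\}}^M$, and compare term-by-term with $\alpha_F^M = \alpha^M - \sum_{K\supseteq F} x_K^M$. The contributions match via the correspondence $K = G$ (when $i\notin K$) or $K = G\cup\{i\}$ (when $i\in K$). The principal obstacle is the boundary case $K = E\setminus\{i\}$: this set is a proper flat of $M$ precisely when $i$ is a coloop, but is never a proper flat of $M\setminus\{i\}$. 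This creates apparent discrepancies of $x_{E\setminus\{i\}}^M$ both in $\theta_i(\alpha^{M\setminus i})$ versus $\alpha^M$ and in the $x$-term bijection, which fortunately cancel against each other, yielding a single uniform identity valid in both the coloop and non-coloop cases.

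Once the identity is established, both sides of the lemma evaluate to $(1-\alpha_F^M)^{-1}$: the $\zeta_M\circ\widetilde{\theta_i}$ side equals $\zeta_M([\mathcal{L}_F^M]) = (1-\alpha_F^M)^{-1}$ by the defining formula for $\zeta_M$, while the $\theta_i\circ\zeta_{M\setminus i}$ side equals $\theta_i((1-\alpha_F^{M\setminus i})^{-1}) = (1-\theta_i(\alpha_F^{M\setminus i}))^{-1} = (1-\alpha_F^M)^{-1}$ since $\theta_i$ is a ring homomorphism. This completes the verification on generators and therefore the proof of the lemma.
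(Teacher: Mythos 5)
Your proof is correct and follows essentially the same route as the paper: reduce to the line-bundle generators $[\mathcal{L}_F]$, identify $\widetilde{\theta_i}[\mathcal{L}_F^{M\setminus i}]=[\mathcal{L}_F^M]$ by comparing first Chern classes via the commutative square for $\widetilde{\theta_i}$ and the Chern character, and then use the defining formula for $\zeta$. The paper simply asserts the key identity $\theta_i(\alpha_F^{M\setminus i})=\alpha_F^M$; your careful expansion of both sides, including the check that the potential $x_{E\setminus\{i\}}$ discrepancies in the coloop case cancel, fills in an implicit step of the paper's argument rather than departing from it.
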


\begin{proof}
Because the line bundle classes $[\mathcal{L}_F]$ generate the $K$-ring (cf.\ Section~\ref{subsec: K Chern}), it suffices to check that $\zeta_M \circ \widetilde{\theta}_i([\mathcal{L}_F])  = \theta_i \circ \zeta_{M \setminus i}([\mathcal{L}_F])$ for any flat $F$ in $M \setminus i$.

For a flat $F$ in $M \setminus i$, $\theta_i(\alpha_F)$ is exactly $\alpha_{\operatorname{cl}(F)}$ in $M$. Since $\widetilde{\theta}_i$ commutes with the Chern class map, we conclude that $\widetilde{\theta}_i([\mathcal{L}_F]) = [\mathcal{L}_{\operatorname{cl}(F)}]$. Therefore, the lemma follows from the fact that $\zeta_{M\setminus i}([\mathcal{L}_F]) = 1 + \alpha_F + \alpha_F^2 + \cdots$.
\end{proof}

In \cite[Theorem 10.11]{BEST}, it is shown that $\zeta_M$ sends $[\mathcal{K}_E]$ to $1 + \beta_E$. We show the analogue for $\beta_S$.

\begin{lemma}\label{lem:composition of deletion of beta}
Let $M$ be a matroid on the ground set $E$ and let $S \subseteq E$ be a nonempty subset. Let $S^c = E \setminus S = \{s_1, \ldots, s_k\}$. Then:
\[
\beta_S = \theta_{S^c}(\beta_{M \setminus S^c}) := \theta_{s_1} \circ \cdots \circ \theta_{s_k} (\beta_{M \setminus S^c}).
\]
Note that $\beta_{S}$ lives in $A^*(M)$, while $\beta_{M \setminus S^c}$ is the $\beta$ class of the matroid $M \setminus S^c$.
\end{lemma}

\begin{proof}
We pick $j \in E \setminus S^c$ and suppose that $\beta_{M \setminus S^c}$ is the sum of flats in $M \setminus S^c$ that do not contain $j$. Through the sequence of $\theta$ maps, every flat that does not contain $j$ and is not contained in $S^c$ will appear, which exactly matches the definition of $\beta_S$.
\end{proof}

\begin{theorem} \label{exceptional morphism of beta}
For nonempty subsets $S\subseteq E$, $\zeta_M$ sends $[\mathcal{K}_S]$ to $1+\beta_S$.
\end{theorem}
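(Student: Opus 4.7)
The plan is an induction on $|E \setminus S|$. When $S = E$ we have $\beta_S = \beta$, and the statement $\zeta_M(\beta) = 1 + \beta$ is the cited Berget--Eur--Spink--Tseng result. For $S \subsetneq E$ I would pick any $i \in E \setminus S$ and apply the inductive hypothesis to the loopless deletion $M \setminus i$ on the ground set $E \setminus \{i\}$: the set $S$ remains a nonempty subset of $E \setminus \{i\}$ with strictly smaller complement, so induction gives
\[
\zeta_{M \setminus i}(\beta_S^{M \setminus i}) \;=\; 1 + \beta_S^{M \setminus i}.
\]
Applying $\theta_i$ and invoking the commutation Lemma~\ref{lem:deletion commutes with zeta} in the form $\zeta_M \circ \widetilde{\theta_i} = \theta_i \circ \zeta_{M \setminus i}$ yields
\[
\zeta_M\bigl(\widetilde{\theta_i}(\beta_S^{M \setminus i})\bigr) \;=\; 1 + \theta_i(\beta_S^{M \setminus i}).
\]
The theorem then reduces to two compatibilities: (i) $\theta_i(\beta_S^{M \setminus i}) = \beta_S^M$ in $A^1(M)$, and (ii) $\widetilde{\theta_i}(\beta_S^{M \setminus i}) = \beta_S^M$ in $K(M)$.

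For (i), decompose $\beta_S^{M \setminus i} = \beta^{M \setminus i} - \sum_F x_F$, the sum ranging over proper nonempty flats of $M \setminus i$ contained in $(E \setminus \{i\}) \setminus S$, and apply $\theta_i$ termwise using $\theta_i(x_F) = x_F + x_{F \cup \{i\}}$ (with target variables set to $0$ when their labels are not flats of $M$). Matching flats of $M \setminus i$ to flats of $M$ via $F \mapsto F$ (when $F$ is a flat of $M$) and $F \mapsto F \cup \{i\}$ (when $F \cup \{i\}$ is a flat of $M$), one checks
\[
\theta_i(\beta^{M \setminus i}) = \beta^M - \varepsilon\, x_{\{i\}}, \qquad \theta_i\!\left(\sum_F x_F\right) = \sum_{\substack{G \subseteq E \setminus S \\ G \text{ p.n.e.\ flat of } M}} x_G - \varepsilon\, x_{\{i\}},
\]
where $\varepsilon = 1$ if $\{i\}$ is a proper nonempty flat of $M$ and $\varepsilon = 0$ otherwise. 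The correction $\varepsilon\, x_{\{i\}}$ appears because producing the target generator $x_{\{i\}}$ would require the source index $F = \varnothing$, which is not a proper nonempty flat. The same correction appears in both pieces, so it cancels on subtraction and (i) follows. For (ii), the map $\widetilde{\theta_i}$ is induced by a projection of toric varieties; hence it sends line bundles to line bundles and commutes with $c_1$. Therefore $\widetilde{\theta_i}(\beta_S^{M \setminus i})$ is a line bundle in $K(M)$ with first Chern class $\theta_i(\beta_S^{M \setminus i}) = \beta_S^M$. Since $\zeta_M$ is an isomorphism, $K(M)$ is torsion-free, and a line bundle is determined by its $c_1$; thus $\widetilde{\theta_i}(\beta_S^{M \setminus i}) = \beta_S^M$ in $K(M)$, completing the inductive step.

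The main technical obstacle is the combinatorial cancellation in (i). A direct attempt to prove $\theta_i(\beta^{M \setminus i}) = \beta^M$ in isolation fails precisely when $\{i\}$ is a rank-$1$ flat of $M$ (as one sees already in $M = U_{2,5}$ with $i = 1$), but the identical correction appears in $\theta_i$ of the subtracted sum and cancels in the difference $\beta_S = \beta - \sum_F x_F$. Once this identity is established, the remainder of the argument is formal and piggybacks on the BEST base case together with the commutation lemma.
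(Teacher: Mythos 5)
Your proposal is correct and takes essentially the same route as the paper: both arguments pass to the minor $M \setminus S^c$ where $\beta_S$ becomes a genuine $\beta$-class, invoke the BEST identity $\zeta(\beta)=1+\beta$ as the base, and transport via the deletion map together with Lemma~\ref{lem:deletion commutes with zeta}. The only difference is that you peel off one element of $E\setminus S$ at a time by induction (and track the $\varepsilon\,x_{\{i\}}$ correction explicitly), whereas the paper applies the composite $\theta_{s_1}\circ\cdots\circ\theta_{s_k}$ in a single step; your bookkeeping of the boundary case where $\{i\}$ is a flat is a welcome refinement of the paper's more terse treatment.
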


\begin{proof}
Let $[\mathcal{K}_{M \setminus S^c}] \in K(M \setminus S^c)$ denote the line bundle in $M \setminus S^c$ corresponding to its $\beta = \beta_E$ class. Since $\zeta_{M \setminus S^c} ([\mathcal{K}_{M \setminus S^c}]) = 1 + \beta_{M \setminus S^c}$, we conclude that $\zeta_M([\mathcal{K}_S]) = 1 + \beta_S$ by Lemma \ref{lem:deletion commutes with zeta} and Lemma \ref{lem:composition of deletion of beta}.
\end{proof}

\begin{corollary} \label{weak KV for beta}
For nonempty subsets $S\subseteq E$, the divisors $\alpha_S, \beta_S$ lie in $\mathcal{P}$ (cf. Definition \ref{def: P class}). In particular, Conjecture \ref{KV vanishing weak} holds for a positive (integral) linear combination of $\alpha_S$ and $\beta_S$.
\end{corollary}
\begin{proof}
We have $\zeta_M(-[\mathcal{L}_S]) = 1-\alpha_S$, and $\alpha_S \in \mathcal{P}$. Theorem~\ref{exceptional morphism of beta} shows that $\zeta_M(-[\mathcal{K}_S]) = 1-\beta_S + \beta_S^2 - \cdots$, and $\beta_S \in \mathcal{P}$.
\end{proof}

\begin{corollary}
Let $S_1, \ldots, S_j$ and $T_1, \ldots, T_k$ be subsets of $E$. If $\sum_{i=1}^j \beta_{S_i} = \sum_{i=1}^k \beta_{T_i}$, then $$\prod_{i=1}^j(1+\beta_{S_i}) = \prod_{i=1}^k(1 + \beta_{T_i}).$$
\end{corollary}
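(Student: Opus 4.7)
The plan is to deduce the corollary directly from Theorem~\ref{exceptional morphism of beta} by exploiting the fact that $\zeta_M$ is a ring isomorphism and that the line-bundle construction converts sums of divisors into tensor products of $K$-classes. Concretely, for divisors $D, D'\in A^1(M)$ we have $[\mathcal{O}(D+D')]=[\mathcal{O}(D)]\cdot[\mathcal{O}(D')]$ in $K(M)$, so the assignment $D\mapsto [\mathcal{O}(D)]$ is a group homomorphism from $(A^1(M),+)$ to the multiplicative group of units of $K(M)$. Composing with the ring isomorphism $\zeta_M:K(M)\xrightarrow{\sim} A^*(M)$ produces a group homomorphism
\[
\mu \colon (A^1(M),+) \;\longrightarrow\; (A^*(M)^{\times},\cdot), \qquad D \;\longmapsto\; \zeta_M([\mathcal{O}(D)]).
\]

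By Theorem~\ref{exceptional morphism of beta}, for every nonempty $S\subseteq E$ we have $\mu(\beta_S)=1+\beta_S$. Applying $\mu$ to both sides of the hypothesis $\sum_{i=1}^k \beta_{S_i}=\sum_{i=1}^l \beta_{T_i}$ and using that $\mu$ takes sums to products, we obtain
\[
\prod_{i=1}^{k}(1+\beta_{S_i}) \;=\; \mu\Bigl(\sum_{i=1}^k \beta_{S_i}\Bigr)\;=\;\mu\Bigl(\sum_{i=1}^l \beta_{T_i}\Bigr)\;=\;\prod_{i=1}^{l}(1+\beta_{T_i}),
\]
which is the desired equality in $A^*(M)$.

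\textbf{Main obstacle.} There is essentially none once Theorem~\ref{exceptional morphism of beta} is in hand: the corollary is a formal consequence of the multiplicativity of $\zeta_M$ together with the additivity of the Picard map $D\mapsto[\mathcal{O}(D)]$. The only subtlety worth flagging explicitly in the writeup is the interpretation of $\zeta_M(\beta_S)$ in the theorem, namely that $\beta_S$ on the $K$-theory side is understood as the class $[\mathcal{O}(\beta_S)]\in K(M)$, so that the identity $\zeta_M(\beta_S)=1+\beta_S$ is precisely the value of $\mu$ on the generator $\beta_S$. Granted this, the rest is one line.
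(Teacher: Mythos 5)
Your proof is correct and matches the intended argument: the corollary is exactly the formal consequence of Theorem~\ref{exceptional morphism of beta} obtained by noting that $D\mapsto\zeta_M([\mathcal{O}(D)])$ converts sums of divisor classes into products in $A^*(M)$, which the paper leaves implicit since the corollary appears immediately after the theorem with no further proof. The clarification that $\beta_S$ on the $K$-theory side means $[\mathcal{O}(\beta_S)]$ is the right thing to make explicit.
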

\begin{proof}
The equation $\sum_{i=1}^j \beta_{S_i} = \sum_{i=1}^k \beta_{T_i}$ implies $\sum_{i=1}^j [\mathcal{K}_{S_i}] = \sum_{i=1}^k [\mathcal{K}_{T_i}]$. Applying $\zeta_M$ on both sides concludes the proof.
\end{proof}
This identity also holds for the $\alpha_F$'s; however, this is less interesting because $\alpha_F = 0$ if $F$ is a rank-1 flat, and the set $\{\alpha_F: F \text{ is a rank $r>1$ flat}\}$ forms a basis of $A^1(M)$.

\begin{corollary}\label{cor:alpha-beta-todd}
Let \(S_1,\dots,S_j\) and \(T_1,\dots,T_k\) be subsets of \(E\).  Let
\(\operatorname{td}_i\) denote the degree-\(i\) component of the Todd class
\(\operatorname{td}(T_M)\).  Then
\[
\deg\!\bigl(\prod_{i=1}^j\alpha_{S_i}\,\prod_{i=1}^k\beta_{T_i}\,\operatorname{td}_{r-j-k-1}\bigr)
=
\deg\!\bigl((-1)^j\prod_{i=1}^j\log(1-\alpha_{S_i})\,
           \prod_{i=1}^k\log(1+\beta_{T_j})\,(1+\alpha+\alpha^2+\cdots)\bigr).
\]
\end{corollary}
\begin{proof}
Compare the coefficient of $s_1 \cdots s_j t_1 \cdots t_k$ in
\[
\chi\!\bigl(\sum_{i=1}^j s_i[\mathcal{L}_{S_i}] + \sum_{i=1}^k t_i[\mathcal{K}_{T_i}]\bigr)
\]
as given by Proposition~\ref{prop: HRR} and Equation~\eqref{Euler char exceptional}.   
\end{proof}
\begin{corollary}
For a proper flat $F$ of $M$, let $[\mathcal{X}_F] \in K(M)$ denote the line bundle with $c_1([\mathcal{X}_F]) = x_F$. We have the following.
\begin{enumerate}
    \item If $F_1$ is a rank 1 flat, then $\zeta_M(-[\mathcal{X}_{F_1}]) = 1 - x_{F_1}$. 
    \item If $F_{r-1}$ is a rank $r-1$ flat, then $\zeta_M([\mathcal{X}_{F_{r-1}}]) = 1 + x_{F_{r-1}}$.
\end{enumerate}
\end{corollary}
\begin{proof}
For (1), one has the identity \(-x_{F_1}=-\beta+\beta_{E\setminus F_1}\).
Using \(\zeta_M([\mathcal{K}_E])=1+\beta\) and \(\zeta_M([\mathcal{K}_{E\setminus F_1}])=1+\beta_{E\setminus F_1}\)
and the relation \(x_{F_1}\beta=0\), a short algebraic computation yields
$\zeta_M(-[\mathcal{X}_{F_1}]) = 1 - x_{F_1}$.  The argument for (2) is analogous, where we consider $\alpha$ and $\alpha_{F_{r-1}}$ instead.
\end{proof}
\begin{remark} \label{facts for excep map} We can compute $\zeta_M([\mathcal{X}_F])$ for general flats $F$ using the same method. Although $\zeta_M([\mathcal{X}_{F_1}]) = 1 + x_{F_1} + x_{F_1}^2 + \cdots + x_{F_1}^{r-1}$ and $\zeta_M([\mathcal{X}_{F_{r-1}}]) = 1 + x_{F_{r-1}}$, it is \emph{false} that $\zeta_M([\mathcal{X}_{F_{r-2}}]) = 1 + x_{F_{r-2}} + x_{F_{r-2}}^2$. 
\end{remark}

\subsection{\texorpdfstring{Numerical dimension of $\beta$ classes}{Numerical dimension of beta classes}} 

It is natural to ask whether Theorem~\ref{h vector} extends to positive integral linear combinations of \([\mathcal{L}_S]\) and \([\mathcal{K}_S]\). We formulate the following conjecture:

\begin{conjecture}\label{beta conjecture}
Let \(M\) be a matroid and let \([\mathcal{L}]\) be a positive integral linear combination
of the classes \([\mathcal{L}_S]\) and \([\mathcal{K}_S]\).  Let \(d\) be the numerical
dimension of \(c_1([\mathcal{L}])\).  Then
\[
(-1)^d\chi(-c_1([\mathcal{L}]))\ge0.
\]
\end{conjecture}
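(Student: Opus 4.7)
The plan is to reduce Conjecture~\ref{beta conjecture} to the known pure-$\alpha$ case of Theorem~\ref{h vector} using the multiplicativity of $\zeta_M$ and the $\mathcal{P}$-class framework of Section~\ref{sec:nef}. First, writing $D = D_\alpha + D_\beta$ with $D_\alpha = \sum_S c_S \alpha_S$ and $D_\beta = \sum_T c_T \beta_T$, the multiplicativity of $\zeta_M$ together with $\zeta_M(-\alpha_S) = 1-\alpha_S$ and $\zeta_M(\beta_T) = 1+\beta_T$ (Theorem~\ref{exceptional morphism of beta}) yields
\[
\zeta_M(-D) \;=\; \prod_S (1-\alpha_S)^{c_S}\,\prod_T (1+\beta_T)^{-c_T},
\]
and so $\chi(-D) = \deg\bigl(\zeta_M(-D)\cdot(1+\alpha+\alpha^2+\cdots)\bigr)$ by \eqref{Euler char exceptional}. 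When $D_\beta = 0$ this reproduces exactly the setting of Theorem~\ref{h vector}.

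Next, since $\alpha_S, \beta_T \in \mathcal{P}$ and $\mathcal{P}$ is closed under nonnegative integer combinations, $D \in \mathcal{P}$; hence the degree-$i$ part of $\zeta_M(-D)$ equals $(-1)^i Q_i$ with $Q_i \in P_i$ a nonnegative combination of products of nef divisors. Setting $a_i := \deg(Q_i \cdot \alpha^{r-1-i}) \geq 0$ gives the alternating-sum expression
\[
\chi(-D) \;=\; \sum_{i=0}^{r-1} (-1)^i a_i.
\]
The target inequality $(-1)^d \chi(-D) \geq 0$ would then follow from the combinatorial vanishing $a_i = 0$ for $0 \leq i < d$, the $\mathcal{P}$-class analogue of the Kawamata--Viehweg vanishing $H^j(\mathcal{O}(-D)) = 0$ for $j < d$ valid in the realizable setting. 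To attack this, I would induct on the total $\beta$-weight $\sum_T c_T$ using the identity
\[
\chi(-D'-c\beta_T) \;=\; \chi(-D'-(c-1)\beta_T) \;-\; \deg\bigl(\beta_T \cdot \zeta_M(-D'-c\beta_T) \cdot (1+\alpha+\alpha^2+\cdots)\bigr),
\]
which follows from $(1+\beta_T)^{-1} - 1 = -\beta_T(1+\beta_T)^{-1}$. Writing $\beta_T = \beta - \sum_{F \subseteq E\setminus T} x_F$ and applying the pullback/pushforward maps of Lemma~\ref{lem:pullback} and Lemma~\ref{lem:pushforward} along each such $F$, the correction term becomes a signed sum of Euler characteristics on minors of $M$, to which induction on rank can be applied --- provided the numerical dimension of the restricted divisor is at most $d-1$. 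Corollary~\ref{weak ineq} (reverse Khovanskii--Teissier) can then be used to bound the individual $a_i$ that remain.

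The principal obstacle is controlling how the numerical dimension of $D$ changes under restriction to the minors $M_F$ and $M^F$: neither $\beta_T$ nor a generic sum $D$ restricts cleanly to a class of the same form, so the inductive hypothesis must accommodate divisors whose shape evolves through the recursion. A valuative reduction to the realizable case over $\mathbb{C}$ via Proposition~\ref{prop:valuative} is tempting, but the numerical dimension of $D$ is not a valuative invariant of $M$, so one would first have to stratify matroids by $\mathrm{numdim}(D)$ and prove the inequality stratum-by-stratum. Even the pure-$\alpha$ case of Theorem~\ref{h vector} rests on a delicate $h$-vector argument for matroid polytope subdivisions, and it is plausible that the mixed $\alpha$-$\beta$ case will require an analogous new combinatorial model --- perhaps involving mixed subdivisions or flag-matroid structure --- going beyond the tools presently developed in the paper.
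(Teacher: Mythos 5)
As a preliminary remark, Conjecture~\ref{beta conjecture} is not proved in the paper: the concluding Lemma of Section~\ref{sec:Equations} handles only divisors of the form $n\beta-\sum a_i x_{F_i}$ with rank-$1$ flats $F_i$, via the deletion--contraction recursion $\chi(M,-j\beta)=\chi(M\setminus i,-j\beta)-\sum_{k=1}^j\chi(M_i,-k\beta)$, and the final remark attributes the general statement to \cite{Vanishhindex}. Your proposed induction on the total $\beta$-weight is a genuinely different recursion from that deletion--contraction on ground-set elements, and your observation that the numerical dimension $d$ is not valuative (so Proposition~\ref{prop:valuative} cannot be invoked without a preliminary stratification) correctly identifies why the paper does not pursue that route either.

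The concrete error in the sketch is the proposed reduction to ``$a_i=0$ for $0\le i<d$.'' Since $\zeta_M$ is a ring homomorphism whose value on each generator $[\mathcal{L}_F]$ has constant term $1$, the degree-$0$ part of $\zeta_M(-D)$ is always $1$; hence $a_0=\deg(\alpha^{r-1})=1$ for every loopless $M$ and every $D$, regardless of $d$. Already for $D=2\beta$ in rank $r\ge2$ one has $d=r-1>0$ but $a_0=1\ne0$, so the needed vanishing is false. The underlying confusion is the identification of the $a_i$ with the cohomology ranks $h^i(\mathcal{O}(-D))$: on a wonderful compactification one has $h^0(\mathcal{O}(-D))=0$ for $D$ big and nef, while $a_0=1$, so the Kawamata--Viehweg analogy does not transfer to the $a_i$. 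Even granting such a vanishing for $i<d$, it would leave $\chi(-D)=(-1)^d(a_d-a_{d+1}+\cdots)$, which need not have sign $(-1)^d$ without controlling $a_j$ for $j>d$ as well. The actual sign behavior comes from cancellations across all degrees (for instance $\chi(-\alpha_F)=a_0-a_1=1-1=0$ when $\operatorname{rk}(F)\ge2$), and a proof must engage with that cancellation structure rather than a blanket low-degree vanishing; you are right that the tools developed in the paper do not yet supply such an argument.
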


The first question is to compute the numerical dimension of $\beta_S$. 
\begin{definition}
For a matroid $M$ on the ground set $E$ and a sequence $(S_1, \ldots , S_m)$ of non-empty subsets of $E$, we say the sequence satisfies the dragon-Hall–Rado condition (with respect to $M$) if

$$\operatorname{rk}_M\big( \bigcup_{i \in I} S_i \big) \geq 1 + |I|, \text{ for all $\varnothing \subsetneq I \subseteq [m]$}.$$
\end{definition}

For classes $\alpha_F$, it is shown that 
\begin{proposition} \cite[Theorem 5.2.4]{BES23}
Let $M$ be a matroid of rank $r$, and let $F_1, \ldots, F_{r-1}$ be flats of $M$ (with repeats allowed). Then $$\deg\big(\alpha_{F_1} \cdots \alpha_{F_{r-1}}\big) = \begin{cases}
1 & \text{if $(F_1, \ldots, F_{r-1})$ satisfies the dragon-Hall-Rado condition}.\\
0 & \text{else}.
\end{cases}$$
This proposition remains true even if we drop the requirement that the $F_i$ are flats.
\end{proposition}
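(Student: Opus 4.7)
The flat version of the statement is cited as [BES23, Theorem~5.2.4], so I would take it as a black box and focus entirely on the extension to arbitrary (not necessarily flat) non-empty subsets $F_1,\dots,F_{r-1}\subseteq E$. The plan is a one-step reduction: replace each $F_i$ by its closure $\operatorname{cl}(F_i)$ and then invoke the flat case.

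The first ingredient is the identity $\alpha_S = \alpha_{\operatorname{cl}(S)}$ in $A^1(M)$ for every non-empty $S\subseteq E$, which is already recorded in the paper immediately after Definition~\ref{beta def}. It follows at once from the defining formula $\alpha_S = \alpha - \sum_{F\supseteq S} x_F$ (sum over proper flats), because the proper flats containing $S$ are exactly the proper flats containing $\operatorname{cl}(S)$. This gives
$$\deg(\alpha_{F_1}\cdots\alpha_{F_{r-1}}) \;=\; \deg(\alpha_{\operatorname{cl}(F_1)}\cdots\alpha_{\operatorname{cl}(F_{r-1})}),$$
so the degree on the left is computed by the flat version of the proposition applied to the flats $\operatorname{cl}(F_1),\dots,\operatorname{cl}(F_{r-1})$.

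What remains is to check that the dragon-Hall-Rado condition for $(F_1,\dots,F_{r-1})$ is equivalent to the dragon-Hall-Rado condition for $(\operatorname{cl}(F_1),\dots,\operatorname{cl}(F_{r-1}))$. This follows from two standard properties of the matroid closure operator: first, $\operatorname{rk}_M(T) = \operatorname{rk}_M(\operatorname{cl}(T))$ for every $T\subseteq E$, and second, monotonicity and idempotence of $\operatorname{cl}$ yield $\operatorname{cl}\big(\bigcup_i F_i\big) = \operatorname{cl}\big(\bigcup_i \operatorname{cl}(F_i)\big)$. Combining these gives
$$\operatorname{rk}_M\!\Big(\bigcup_{i\in I} F_i\Big) \;=\; \operatorname{rk}_M\!\Big(\bigcup_{i\in I} \operatorname{cl}(F_i)\Big)$$
for every non-empty $I\subseteq [r-1]$, so the two DHR conditions are identical.

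Since both the intersection number and the DHR condition are invariant under passing to closures, the extension reduces formally to the flat case. There is no substantive obstacle beyond recalling the basic closure properties of matroids; the genuine content of the proposition sits entirely in the flat version established in [BES23].
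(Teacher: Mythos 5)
Your reduction is correct: $\alpha_{F_i} = \alpha_{\operatorname{cl}(F_i)}$ (already recorded in the paper after Definition~\ref{beta def}), and the DHR condition is invariant under closure because $\operatorname{rk}_M\bigl(\bigcup_{i\in I}F_i\bigr) = \operatorname{rk}_M\bigl(\bigcup_{i\in I}\operatorname{cl}(F_i)\bigr)$ by monotonicity and idempotence of $\operatorname{cl}$ together with rank-invariance under closure. The paper states the non-flat extension without giving an argument, and this is exactly the reduction it is implicitly relying on, so your proposal matches the intended reasoning.
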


\begin{theorem} \label{beta nd}
Let $M$ be a matroid of rank $r$, and let $S_1, \ldots, S_{r-1}$ be subsets of $E$ (with repeats allowed). Then:
\[
\deg\big(\beta_{S_1} \cdots \beta_{S_{r-1}}\big) >  0 \Longleftrightarrow \deg\big(\alpha_{S_1} \cdots \alpha_{S_{r-1}}\big) >  0.
\]
In particular, $\deg\big(\beta_{S_1} \cdots \beta_{S_{r-1}}\big) > 0$ if and only if $(S_1, \ldots, S_{r-1})$ satisfies the dragon-Hall-Rado condition.
\end{theorem}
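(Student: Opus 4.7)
My strategy is a valuative reduction to the realizable case, combined with positivity arguments for each direction of the biconditional.

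First, both maps $M \mapsto \deg_M(\alpha_{S_1}\cdots\alpha_{S_{r-1}})$ and $M \mapsto \deg_M(\beta_{S_1}\cdots\beta_{S_{r-1}})$ are valuative on rank-$r$ loopless matroids on $E$: by Proposition~\ref{prop:valuative} applied with $A = \prod_i \alpha_{S_i}$ (respectively $A = \prod_i \beta_{S_i}$) as a class in $A^{r-1}(X_E)$ and $z=1$, using that $\alpha_{S_i}$ and $\beta_{S_i}$ are pullbacks from $X_E$. By Proposition~\ref{prop:linear} it therefore suffices to verify the biconditional on realizable matroids. For such $M$, both classes are nef on $W_L$ (Theorem~\ref{thm:nef-definitions}(P1)), so both intersection numbers lie in $\mathbb{Z}_{\ge 0}$; combined with the cited fact that $\deg_M(\prod\alpha_{S_i}) \in \{0,1\}$ equals $1$ exactly when DHR holds, the ``in particular'' clause then reduces to the main biconditional.

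For the direction DHR $\Rightarrow \deg_M(\prod\beta_{S_i}) > 0$, I would apply Rado's theorem to the DHR data to produce an independent SDR $(e_i \in S_i)_{i=1}^{r-1}$, and define the complete flag of flats $F_i := \operatorname{cl}_M(\{e_1,\ldots,e_i\})$. Working on the wonderful compactification $W_L$, I would represent each $\beta_{S_i}$ by a generic effective divisor (using that $\beta_{S_i}$, as the pullback from $X_E$ of the Cremona image of a generic hyperplane section through $\mathbb{P}L_{S_i}$, is globally generated) and invoke Kleiman's transversality theorem to compute $\deg_M(\prod \beta_{S_i})$ as the count of transverse intersection points; the SDR-flag produces at least one such point (near the torus-fixed point corresponding to the flag), forcing $\deg_M(\prod \beta_{S_i}) \ge 1$.

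For the converse, DHR fails $\Rightarrow \deg_M(\prod\beta_{S_i}) = 0$, I plan to induct on rank $r$. Pick a minimal witness $I \subseteq [r-1]$ with $\operatorname{rk}_M(\bigcup_{i \in I}S_i) \le |I|$ and let $F := \operatorname{cl}_M(\bigcup_{i \in I}S_i)$, a proper flat of $M$. Applying the pullback map $\varphi^F_M$ (Lemma~\ref{lem:pullback}) together with the push-pull formula (Lemma~\ref{lem:pushforward}), I would decompose $\deg_M(\prod \beta_{S_i})$ as a finite sum of products of top intersection numbers on the minor pair $(M_F, M^F)$; in each summand the $M^F$-factor is indexed by the restricted tuple $(S_i \cap F)_{i \in I}$, which still violates DHR by the minimality of $I$, and vanishes by the inductive hypothesis. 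The main obstacle is the bookkeeping of this decomposition: unlike $\alpha_{S_i}$, the class $\beta_{S_i}$ is a sum over many flats and contains the term $x_F$ itself, whose image $\varphi^F_M(x_F) = -(1\otimes \alpha_{M^F} + \beta_{M_F}\otimes 1)$ mixes the two minor factors and must be carefully threaded through all $r-1$ product factors to verify the overall vanishing.
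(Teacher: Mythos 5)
Your approach is genuinely different from the paper's, and it has a gap at the very first step that I don't think can be repaired within your framework.

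\textbf{The valuative reduction does not transfer a biconditional.} You propose to show the biconditional on realizable matroids and then transfer via Proposition~\ref{prop:valuative} and Proposition~\ref{prop:linear}. But valuativity lets you transfer \emph{equalities}: if a valuative function vanishes on all (realizable) Schubert matroids, it vanishes everywhere, because $1_{P_M}$ is a $\mathbb{Z}$-linear combination (with signs) of Schubert-matroid indicators. Positivity is not preserved under signed linear combinations, so ``\(f(M)>0 \iff g(M)>0\) for all realizable \(M\)'' gives no information about nonrealizable \(M\). The only way to rescue this is to prove an identity on realizable matroids and transfer that identity — e.g.\ \(\deg(\prod\beta_{S_i})=\deg(\prod\alpha_{S_i})\) — but that equality is in general false: the paper observes \(\deg(\alpha^{r-1})=1\) while \(\deg(\beta^{r-1})\) can exceed \(1\), and poses as an open question what the value of \(\deg(\prod\beta_{S_i})\) is. So the sentence ``it therefore suffices to verify the biconditional on realizable matroids'' is not justified.

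\textbf{Even on realizable matroids, your converse is unfinished.} You yourself flag the bookkeeping in the DHR-fails direction: \(\varphi_M^F(\beta_{S_i})\) is a sum over many flats, and the \(\varphi_M^F(x_F) = -(1\otimes\alpha_{M^F}+\beta_{M_F}\otimes 1)\) term mixes the tensor factors. This is not a cosmetic difficulty; it's where the argument actually lives, and the proposal does not resolve it.

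\textbf{What the paper does instead.} The paper's proof works uniformly for \emph{all} matroids with no realizability reduction, by observing that \(\alpha_S\) and \(\beta_S\), after averaging over \(i\in S\), are both \emph{strictly positive} rational combinations of the same set of generators \(\{x_F: 0 < |F\cap S| < |S|\}\). Then Corollary~\ref{secretpositive} (a consequence of the reverse Khovanskii--Teissier inequality / Lorentzian positivity, not of geometry on \(W_L\)) says that for a nonnegative combination \(l=\sum c_Fx_F\) and nef \(\ell_1,\dots,\ell_d\), \(l\cdot\ell_1\cdots\ell_d=0\) iff each \(c_Fx_F\cdot\ell_1\cdots\ell_d=0\). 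Since \(\alpha_S\) and \(\beta_S\) have identical support, one gets \(\alpha_S\cdot\ell_1\cdots\ell_d\ne 0 \iff \beta_S\cdot\ell_1\cdots\ell_d\ne 0\) for \(d<r-1\), and the theorem follows by swapping one factor at a time. This is both shorter and more general than your route, and it avoids the obstruction above because it never passes through a realizability-dependent statement. I'd recommend discarding the valuative reduction and instead comparing the two nonnegative presentations of \(\alpha_S\) and \(\beta_S\) directly, which is where the real content of the theorem lies.
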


Before proving this theorem, we establish a few preliminary lemmas. For a flag of flats $\mathcal{F} = (F_1 \subsetneq \cdots \subsetneq F_k)$, we denote the monomial $x_{F_1} \cdots x_{F_k}$ by $x_{\mathcal{F}}$.

\begin{lemma}
Let $M$ be a matroid of rank $r$, and let $\ell_1, \dots, \ell_k$ be nef divisors. The product $\ell_1  \cdots \ell_k$ is a nonnegative linear combination of $x_{\mathcal{F}}$ for flags of flats $\mathcal{F}$.
\end{lemma}

\begin{proof}
We prove the result by induction on $k$. The result is trivially true for $k = 1$.

For a nef divisor $\ell$ and a flag of flats $\mathcal{F} = (F_1 \subsetneq \cdots \subsetneq F_k)$, we can write $\ell = \sum c_Fx_F$ such that $c_{F_i} = 0$ and $c_F \geq 0$ for any $F$ where $F \cup \mathcal{F}$ forms a valid flag of flats. All other terms will evaluate to zero when multiplied by $x_{\mathcal{F}}$, which completes the induction. 
\end{proof}

If we consider ample divisors instead, the product is a strictly positive linear combination of all possible $x_{\mathcal{F}}$.

\begin{corollary} \label{secretpositive}
Let $D = \sum_F c_Fx_F$ with $c_F \geq 0$, and let $\ell_1, \ldots, \ell_d$ be nef divisors. Then $$D \cdot (\ell_1 \cdots\ell_d) = 0 \Longleftrightarrow c_Fx_F (\ell_1 \cdots\ell_d) = 0 \text{ for all $F$}.$$

\end{corollary}
\begin{proof}
$ c_Fx_F (\ell_1 \cdots\ell_d)$ can be written as a nonnegative linear combination of $x_{\mathcal{F}}$ for flag of flats $\mathcal{F}$.

We may assume $r-d-2 \geq 0$. Let $A$ be an ample divisor. Then
\begin{align*}
\sum c_Fx_F \cdot \ell_1 \cdots \ell_{d} = 0 &\implies \deg(\sum c_Fx_F \cdot \ell_1 \cdots \ell_{d}A^{r-d-2}) = 0 \\ &\implies \deg(c_Fx_F \cdot \ell_1 \cdots \ell_{d}A^{r-d-2})  = 0 \implies c_Fx_F \cdot \ell_1 \cdots \ell_{d} = 0.
\end{align*}

\end{proof}

\begin{proof}[Proof of Theorem \ref{beta nd}]
We prove the following stronger statement: For a set $S \subset E$ and nef divisors $\ell_1, \ldots, \ell_{d}$, $d < r-1$, we have $$\beta_{S} \cdot \ell_1 \cdots \ell_{d} \neq   0 \Longleftrightarrow \alpha_{S} \cdot \ell_1 \cdots \ell_{d} \neq 0.$$

For any $i \in S$, $\alpha_{S}$ consists of flats containing $i$ but not containing $S$. Therefore, if we sum over all $i \in S$ and let $n = |S|$, we have $$\alpha_{S} = \sum_{j=1}^{n-1} \left(\frac{j}{n} \sum_{|F \cap S| = j} x_F\right).$$

Similarly, for any $i \in S$, $\beta_{S}$ consists of flats not containing $i$ but not contained in $E \setminus S$, we have $$\beta_{S} = \sum_{j=1}^{n-1} \left(\frac{n-j}{n} \sum_{|F \cap S| = j} x_F\right).$$

Therefore, by Corollary \ref{secretpositive},
\begin{align*}
    \beta_{S} \cdot \ell_1 \cdots \ell_{d} \neq  0 &\Longleftrightarrow 
    x_F \cdot \ell_1 \cdots \ell_{d} \neq  0 \text{ for a flat $F$ such that $0 < |F \cap S| < n$} \\&\Longleftrightarrow 
    \alpha_{S} \cdot \ell_1 \cdots \ell_{d} \neq  0. 
\end{align*}

The proof of the original theorem then proceeds by exchanging $\beta_{S_i}$ and $\alpha_{S_i}$ one by one. 
\end{proof}

In particular, the numerical dimension of $\beta_S$ equals $\operatorname{rk}(S) - 1$.

\begin{question}
Do we have a simple formula for $\deg(\beta_{S_1} \cdots \beta_{S_{r-1}})$ if it is nonzero?
\end{question}
\begin{remark}
Let $S = \bigcup^{r-1}_{i=1} S_i$. We will have $\deg(\beta_{S_1} \cdots \beta_{S_{r-1}}) \leq \deg(\beta_{S}^{r-1})$, but equality may not hold.

The proof also suggests the notion of $\gamma_S = \alpha_S + \beta_{S}$, which is the sum of $x_F$ where $0 < |F \cap S| < |S|$; this is why we use the notation $\beta_S$ for subtracting flats contained in $E \setminus S$ instead of in $S$.

\end{remark}

\subsection{Euler characteristic for \texorpdfstring{$\beta$}{beta} classes}

Finally, in this section, we investigate the Euler characteristics of $\beta$ classes. For a positive integer $n$, it is known (see \cite[Example 5.8]{EL}) that $$(-1)^{r-1}\chi(-n[\mathcal{K}_E]) \geq 0.$$

\begin{corollary}
Let $M$ be a matroid of rank $r$ on the ground set $E$, and let $S \subseteq E$ be a nonempty subset. Setting $S^c = E \setminus S$, suppose that $\operatorname{rk}(S) = i$ (so that $i-1$ is the numerical dimension of $\beta_S$). Let $[\mathcal{K}_{M \setminus S^c}] \in K(M \setminus S^c)$ denote the line bundle in $M \setminus S^c$ corresponding to its $\beta = \beta_E$ class. Then for any integer $k$, we have $\chi(k[\mathcal{K}_S]) = \chi(k[\mathcal{K}_{M \setminus S^c}])$. In particular, for any positive integer $n$, we have
$$
(-1)^{i-1}\chi(-n[\mathcal{K}_S]) \ge 0.
$$
\end{corollary}

\begin{proof}
By Lemma~\ref{lem:deletion}, the deletion map commutes with the degree map up to multiplication by a power of $\alpha$, which accounts for any difference in rank between the respective matroids. Furthermore, Lemma~\ref{lem:composition of deletion of beta} states that the composition of deletion maps, denoted $\theta_{S^c}$, sends $\beta_{M \setminus S^c} \in A^*(M \setminus S^c)$ to $\beta_S \in A^*(M)$. One can also check that it maps $\alpha_{M \setminus S^c}$ to $\alpha_M$. 

The Euler characteristic can be evaluated by $
\chi(k [\mathcal{K}_E]) = \deg\left((1+\beta)^k(1+\alpha+ \alpha^2 + \cdots)\right).
$ Applying the composition $\theta_{S^c}$ to this expansion for $\beta_{M \setminus S^c}$ yields $\chi(k[\mathcal{K}_S]) = \chi(k[\mathcal{K}_{M \setminus S^c}])$.
\end{proof}

To conclude the section, we prove a special case of Conjecture \ref{beta conjecture}.
\begin{lemma}
Let \(F_1,\dots,F_m\) be rank-1 flats such that each
\(S_i:=E\setminus F_i\) has rank \(r\).  Fix positive integers
\(n,a_1,\dots,a_m\) with \(n\ge\sum_{i=1}^m a_i\), and set
\[
B=(n-\sum_{i=1}^m a_i)\beta+\sum_{i=1}^m a_i\beta_{S_i}
 \;=\; n\beta - \sum_{i=1}^m a_i x_{F_i}.
\]
Then \(B\) has numerical dimension \(r-1\). Let $[\mathcal{B}] \in K(M)$ be the line bundle with $c_1([\mathcal{B}]) = B$. We have
\[
(-1)^{r-1}\chi(-[\mathcal{B}])\;\ge\;0.
\]
\end{lemma}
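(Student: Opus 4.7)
I would prove the lemma in two parts: first verifying that $B$ is big and nef, and then establishing the vanishing inequality via an explicit algebraic identity.

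For nefness and numerical dimension: since $F_i$ is a rank-1 flat, the only nonempty proper flat of $M$ contained in $E\setminus S_i = F_i$ is $F_i$ itself, so $\beta_{S_i} = \beta - x_{F_i}$, confirming $B = (n-\sum a_i)\beta + \sum a_i\beta_{S_i}$ is a nonnegative integer combination of nef classes and hence nef. By Theorem~\ref{beta nd}, the hypothesis $\operatorname{rk}(S_i) = r$ implies the Dragon--Hall--Rado condition for $(S_i,\ldots,S_i)$, so $\deg(\beta_{S_i}^{r-1}) > 0$. Expanding $B^{r-1}$ and using that mixed products of nef classes have nonnegative degree (Corollary~\ref{weak ineq}) gives $\deg(B^{r-1}) \ge a_1^{r-1}\deg(\beta_{S_1}^{r-1}) > 0$, so $B$ has numerical dimension $r-1$.

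For the vanishing inequality, the key algebraic observation is that $\beta\cdot x_{F_i} = 0$ (Lemma~\ref{lemma:vanish} with rank-1 $F_i$) and $x_{F_i}\cdot x_{F_j} = 0$ for $i\neq j$ (distinct rank-1 flats are incomparable). Because all cross-terms vanish, in $A^*(M)\otimes\mathbb{Q}$ the exponential decomposes cleanly:
\[
e^{-B} \;=\; e^{-n\beta}\prod_i e^{a_i x_{F_i}} \;=\; e^{-n\beta} + \sum_{i=1}^m (e^{a_i x_{F_i}} - 1).
\]
Pairing with $\operatorname{td}(T_M)$ via Proposition~\ref{prop: HRR} yields $\chi(-B) = \chi(-n\beta) + \sum_i[\chi(a_i x_{F_i}) - 1]$. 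Iterating the short exact sequence $0\to\mathcal{O}((k-1)x_{F_i})\to\mathcal{O}(k x_{F_i})\to\mathcal{O}_{x_{F_i}}(k x_{F_i})\to 0$ together with $\varphi^{F_i}(k x_{F_i}) = -k\beta_{M_{F_i}}$ (valid because the restriction $M^{F_i}$ is trivial for rank-1 $F_i$), one computes $\chi(a_i x_{F_i}) - 1 = \sum_{k=1}^{a_i}\chi_{M_{F_i}}(-k\beta)$. Using the corollary just before this lemma, $\sum_{k=1}^{a_i}\chi_{M_{F_i}}(-k\beta) = \chi_M(-a_i\beta) - \chi_{M\setminus F_i}(-a_i\beta)$, and setting $P_N(a) := (-1)^{r-1}\chi_N(-a\beta) \ge 0$ (from [EL, Example~5.8] applied to the rank-$r$ matroids $M$ and $M\setminus F_i$), one obtains the explicit formula
\[
(-1)^{r-1}\chi(-B) \;=\; P_M(n) - \sum_{i=1}^m\bigl[P_M(a_i) - P_{M\setminus F_i}(a_i)\bigr].
\]

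For $m = 1$, rewriting via the single-deletion identity $P_M(n) = P_{M\setminus F_1}(n) + \sum_{k=1}^n P_{M_{F_1}}(k)$ gives $(-1)^{r-1}\chi(-B) = P_{M\setminus F_1}(n) + \sum_{k=a_1+1}^n P_{M_{F_1}}(k) \ge 0$ immediately. The main obstacle is extending this to $m \ge 2$: successive deletions may fail to preserve the rank-$r$ condition (for instance $\operatorname{rk}_M(E\setminus F_i\setminus F_j) < r$ is possible even when each $\operatorname{rk}_M(E\setminus F_i) = r$), so the clean iterative peeling used in the $m = 1$ case does not directly propagate. I would close the argument either by a careful case analysis bounding the $P$-polynomials via the Lorentzian/reverse Khovanskii--Teissier framework of Corollary~\ref{weak ineq}, or by combining the displayed identity (which expresses $(-1)^{r-1}\chi(-B)$ as a universal polynomial in valuative invariants of $M$) with the realizable-case input from Kawamata--Viehweg vanishing on $W_L$ (giving $\chi(\mathcal{O}(B+K_{W_L})) = h^0(\mathcal{O}(B+K_{W_L})) \ge 0$ via Serre duality) and Proposition~\ref{prop:linear} to transport the non-negativity to all matroids of rank $r$.
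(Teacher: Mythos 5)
Your algebraic setup is correct and equivalent to the paper's: using $\beta\cdot x_{F_i}=0$ and the vanishing of pairwise cross-terms, both you and the paper arrive at the decomposition
\[
\chi(-B)=\chi(-n\beta)-\sum_{i=1}^m\chi(-a_i\beta)+\sum_{i=1}^m\chi(-a_i\beta_{S_i}),
\]
and both you and the paper then use the corollary $\chi_M(-a_i\beta_{S_i})=\chi_{M\setminus F_i}(-a_i\beta)$. (There is a transient sign slip in your stated deletion--contraction identity $\sum_{k=1}^{a_i}\chi_{M_{F_i}}(-k\beta)=\chi_M(-a_i\beta)-\chi_{M\setminus F_i}(-a_i\beta)$ --- the right side should be $\chi_{M\setminus F_i}(-a_i\beta)-\chi_M(-a_i\beta)$ --- but your final identity $(-1)^{r-1}\chi(-B)=P_M(n)-\sum_i\bigl[P_M(a_i)-P_{M\setminus F_i}(a_i)\bigr]$ is correct.) Your $m=1$ argument is also correct.

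However, for $m\ge 2$ there is a genuine gap and you say so honestly. The paper closes it with a different and cleaner idea that you miss: instead of trying to unpack each $\chi(-a_i\beta_{S_i})$ via deletion--contraction (which runs into the problem you identified, that successive deletions need not preserve rank and the structure degrades), the paper keeps those terms aside (they are individually nonnegative by the corollary preceding the lemma) and shows directly that
\[
(-1)^{r-1}\chi(-n\beta)\;\ge\;\sum_{i=1}^m(-1)^{r-1}\chi(-a_i\beta)
\]
by proving, via induction on $|E|$ using the deletion--contraction recursion, that the quantity $P_M(n)/n=(-1)^{r-1}\chi(M,-n\beta)/n$ is \emph{nondecreasing in $n$}. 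From this monotonicity, and $n\ge\sum a_i$, one gets $P_M(a_i)\le a_i\cdot P_M(\sum_j a_j)/\sum_j a_j$, hence $\sum_i P_M(a_i)\le P_M(\sum_j a_j)\le P_M(n)$. This superadditivity statement is the key ingredient your proposal lacks.

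Your two proposed fallbacks do not repair the gap. The Lorentzian/rKT route is left unexecuted and it is not clear how to bound differences of $P$-polynomials that way. The valuativity fallback is conceptually flawed: Proposition~\ref{prop:linear} lets you transport \emph{identities} from realizable matroids to all matroids, because the indicator-function relation may involve negative integer coefficients; it does \emph{not} let you transport \emph{inequalities}. This is exactly why Conjecture~\ref{KV-vanishing} remains open for general matroids despite being known in the realizable case. So neither proposed route would close the $m\ge 2$ case, and the monotonicity argument is genuinely needed.
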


\begin{proof}[Sketch of proof]
The rank-1 flats \(x_{F_i}\) pairwise have vanishing intersection with \(\beta\)
(and with each other in the relevant degrees), so mixed products of \(\beta\)
and the \(x_{F_i}\) vanish.  Hence the Chern character of \(-[\mathcal{B}]\) decomposes
essentially as a sum of simpler contributions coming from \(-n\beta\) and
the \(x_{F_i}\); this leads to the identity
\[
\chi(-[\mathcal{B}])
= \chi(-n[\mathcal{K}_E]) - \sum_{i=1}^m \chi(-a_i[\mathcal{K}_E]) + \sum_{i=1}^m \chi(-a_i[\mathcal{K}_{S_i}]).
\]
(The displayed equality follows from the vanishing of cross terms together
with the relation \(\beta_{S_i}=\beta-x_{F_i}\) for rank-1 flats.)

To analyse the first two terms, one uses the deletion–contraction type relation
for \(\chi(-j[\mathcal{K}_E])\): for any non-loop, non-coloop element \(i\in E\) (and
assuming the contraction \(M_i\) is loopless) one has
\[
\chi(M,-j[\mathcal{K}_E])=\chi(M\setminus i,-j[\mathcal{K}_E])-\sum_{k=1}^j \chi(M_i,-k[\mathcal{K}_E]).
\]
By induction on the size of the ground set (applying this relation and using
the sign properties for lower-rank contractions) one shows that, for fixed
matroid \(M\) of rank \(r>1\), the sequence
\(\dfrac{(-1)^{r-1}\chi(M,-n[\mathcal{K}_E])}{n}\) is nondecreasing in \(n\).  Consequently
\[
(-1)^{r-1}\chi(-n[\mathcal{K}_E]) - \sum_{i=1}^m (-1)^{r-1}\chi(-a_i[\mathcal{K}_E]) \;\ge\; 0.
\]

Finally, each term \(\chi(-a_i[\mathcal{K}_{S_i}])\) satisfies the expected sign
condition: \((-1)^{r-1}\chi(-a_i[\mathcal{K}_{S_i}])\ge0\). Combining the three displayed inequalities yields
\((-1)^{r-1}\chi(-[\mathcal{B}])\ge0\), as required.  (When \(r=1\) the statement is
trivial because the geometry is a point.)
\end{proof}

The more general statement for Theorem~\ref{h vector} has recently been proved by Eur, Fink, and Larson for divisors that are pullbacks of nef line bundles from the permutohedral variety $X_E$ (\cite[Theorem B]{Vanishhindex}). Consequently, Conjecture~\ref{beta conjecture} and several derived statements in Section~\ref{sec:Equations} become automatic. We include a self-contained treatment here, and we also analyze the more general combinatorially nef and big-and-nef cases, which remain open beyond the scope of \cite{Vanishhindex}.

\printbibliography   

@book{Fulton,
  author    = {William Fulton},
  title     = {Intersection Theory},
  edition   = {2nd},
  publisher = {Springer-Verlag},
  year      = {1998},
}

@misc{Staircase,
 author = {Alba, Franquiz Caraballo and Liu, Jeffery},
 title = {A ''{Staircase}'' formula for the {Chern}-{Schwartz}-{MacPherson} cycle of a matroid},
 year = {2024},
 howpublished = {Preprint, {arXiv}:2409.03641 [math.{CO}] (2024)},
 keywords = {14C17,14N20,05B35},
 url = {https://arxiv.org/abs/2409.03641},
 arXiv = {arXiv:2409.03641}
}

@misc{Vanishhindex,
 author = {Eur, Christopher and Fink, Alex and Larson, Matt},
 title = {Vanishing theorems for combinatorial geometries},
 year = {2025},
 howpublished = {Preprint, {arXiv}:2510.05207 [math.{AG}] (2025)},
 url = {https://arxiv.org/abs/2510.05207},
 arXiv = {arXiv:2510.05207}
}

@article{AHK18,
 author = {Adiprasito, Karim and Huh, June and Katz, Eric},
 title = {Hodge theory for combinatorial geometries},
 fjournal = {Annals of Mathematics. Second Series},
 journal = {Ann. of Math. (2)},
 issn = {0003-486X},
 volume = {188},
 number = {2},
 pages = {381--452},
 year = {2018},
 doi = {10.4007/annals.2018.188.2.1},
 keywords = {14T15,05A99,05E16,14F99},
 zbMATH = {6921184},
 Zbl = {1442.14194}
}

@article{Todd,
 author = {Castillo, Federico and Liu, Fu},
 title = {On the {Todd} class of the permutohedral variety},
 fjournal = {Algebraic Combinatorics},
 journal = {Algebr. Comb.},
 issn = {2589-5486},
 volume = {4},
 number = {3},
 pages = {387--407},
 year = {2021},
 doi = {10.5802/alco.157},
 keywords = {52B20,14M25},
 zbMATH = {7367698},
 Zbl = {1467.52022}
}

@article{Nef,
 author = {Gibney, Angela and Maclagan, Diane},
 title = {Lower and upper bounds for nef cones},
 fjournal = {IMRN. International Mathematics Research Notices},
 journal = {Int. Math. Res. Not. IMRN},
 issn = {1073-7928},
 volume = {2012},
 number = {14},
 pages = {3224--3255},
 year = {2012},
 doi = {10.1093/imrn/rnr121},
 keywords = {14E30,14M25},
 zbMATH = {6072501},
 Zbl = {1284.14020}
}

@article{BES23,
 author = {Backman, Spencer and Eur, Christopher and Simpson, Connor},
 title = {Simplicial generation of {Chow} rings of matroids},
 fjournal = {Journal of the European Mathematical Society (JEMS)},
 journal = {J. Eur. Math. Soc. (JEMS)},
 issn = {1435-9855},
 volume = {26},
 number = {11},
 pages = {4491--4535},
 year = {2024},
 doi = {10.4171/JEMS/1350},
 keywords = {14T15,05B35,14T90,14C17,14M25},
 zbMATH = {7927738}
}

@article{Valuative,
 author = {Ferroni, Luis and Schr{\"o}ter, Benjamin},
 title = {Valuative invariants for large classes of matroids},
 fjournal = {Journal of the London Mathematical Society. Second Series},
 journal = {J. Lond. Math. Soc. (2)},
 issn = {0024-6107},
 volume = {110},
 number = {3},
 pages = {86},
 note = {Id/No e12984},
 year = {2024},
 doi = {10.1112/jlms.12984},
 keywords = {52B40,05B35,52B45,13D40,05C31,14T20},
 zbMATH = {7926126},
 Zbl = {1548.52017}
}

@article{LLPP2024,
 author = {Larson, Matt and Li, Shiyue and Payne, Sam and Proudfoot, Nicholas},
 title = {{{\(K\)}}-rings of wonderful varieties and matroids},
 fjournal = {Advances in Mathematics},
 journal = {Adv. Math.},
 issn = {0001-8708},
 volume = {441},
 pages = {43},
 note = {Id/No 109554},
 year = {2024},
 doi = {10.1016/j.aim.2024.109554},
 keywords = {05E14,05B35,52B40,14C35,14N20,14H10},
 zbMATH = {7823126},
 Zbl = {1535.05266}
}

@article{BEST,
 author = {Berget, Andrew and Eur, Christopher and Spink, Hunter and Tseng, Dennis},
 title = {Tautological classes of matroids},
 fjournal = {Inventiones Mathematicae},
 journal = {Invent. Math.},
 issn = {0020-9910},
 volume = {233},
 number = {2},
 pages = {951--1039},
 year = {2023},
 doi = {10.1007/s00222-023-01194-5},
 keywords = {14M25,52B40,05E14},
 zbMATH = {7704061}
}

@article{BHM,
 author = {Braden, Tom and Huh, June and Matherne, Jacob P. and Proudfoot, Nicholas and Wang, Botong},
 title = {A semi-small decomposition of the {Chow} ring of a matroid},
 fjournal = {Advances in Mathematics},
 journal = {Adv. Math.},
 issn = {0001-8708},
 volume = {409},
 pages = {49},
 note = {Id/No 108646},
 year = {2022},
 doi = {10.1016/j.aim.2022.108646},
 keywords = {14C15,05B35,52B40},
 zbMATH = {7597104},
 Zbl = {1509.14012}
}

@article{DCP,
 author = {De Concini, C. and Procesi, C.},
 title = {Wonderful models of subspace arrangements},
 fjournal = {Selecta Mathematica. New Series},
 journal = {Selecta Math. (N.S.)},
 issn = {1022-1824},
 volume = {1},
 number = {3},
 pages = {459--494},
 year = {1995},
 doi = {10.1007/BF01589496},
 keywords = {14N10,17B67},
 zbMATH = {838584},
 Zbl = {0842.14038}
}

@article{DF,
 author = {Derksen, Harm and Fink, Alex},
 title = {Valuative invariants for polymatroids},
 fjournal = {Advances in Mathematics},
 journal = {Adv. Math.},
 issn = {0001-8708},
 volume = {225},
 number = {4},
 pages = {1840--1892},
 year = {2010},
 doi = {10.1016/j.aim.2010.04.016},
 keywords = {05B35,52B40,52B45,16T30},
 zbMATH = {5796750},
 Zbl = {1221.05031}
}

@article{EUR20,
 author = {Eur, Christopher},
 title = {Divisors on matroids and their volumes},
 fjournal = {Journal of Combinatorial Theory. Series A},
 journal = {J. Combin. Theory Ser. A},
 issn = {0097-3165},
 volume = {169},
 pages = {31},
 note = {Id/No 105135},
 year = {2020},
 doi = {10.1016/j.jcta.2019.105135},
 keywords = {05B35,52B40,14T15,14C20},
 zbMATH = {7137756},
 Zbl = {1428.05149}
}

@misc{Matt_thesis,
  author  = {Matt Larson},
  title   = {K-theoretic positivity for wonderful varieties and matroids},
  year    = {2024},
  howpublished = {\url{https://mattlarson2399.github.io/Papers/LarsonThesis2024.pdf}},
  note    = {Ph.D. thesis},
}

@book{Ox,
  author    = {James Oxley},
  title     = {Matroid Theory},
  edition   = {2nd},
  series    = {Oxford Graduate Texts in Mathematics},
  volume    = {21},
  publisher = {Oxford University Press},
  year      = {2011},
}

@book{Voisin,
  author    = {Claire Voisin},
  title     = {Hodge Theory and Complex Algebraic Geometry I},
  publisher = {Cambridge University Press},
  year      = {2002},
}

@article{JJ,
 author = {Hu, Jiajun and Xiao, Jian},
 title = {Intersection theoretic inequalities via {Lorentzian} polynomials},
 fjournal = {Mathematische Annalen},
 journal = {Math. Ann.},
 issn = {0025-5831},
 volume = {390},
 number = {2},
 pages = {2859--2896},
 year = {2024},
 doi = {10.1007/s00208-024-02822-y},
 keywords = {14C30,14C17,05E05,32Q15,52A40},
 zbMATH = {7932370}
}

@article{BH20,
 author = {Br{\"a}nd{\'e}n, Petter and Huh, June},
 title = {Lorentzian polynomials},
 fjournal = {Annals of Mathematics. Second Series},
 journal = {Ann. of Math. (2)},
 issn = {0003-486X},
 volume = {192},
 number = {3},
 pages = {821--891},
 year = {2020},
 doi = {10.4007/annals.2020.192.3.4},
 keywords = {52B40,14T15,05A20,05E14,05B35},
 zbMATH = {7285355},
 Zbl = {1454.52013}
}

@article{KVsurface,
 author = {Xie, Qihong},
 title = {Kawamata-Viehweg vanishing on rational surfaces in positive characteristic},
 fjournal = {Mathematische Zeitschrift},
 journal = {Math. Z.},
 issn = {0025-5874},
 volume = {266},
 number = {3},
 pages = {561--570},
 year = {2010},
 doi = {10.1007/s00209-009-0585-9},
 keywords = {14F17,14G17,14J26,14C20},
 zbMATH = {5797335},
 Zbl = {1235.14021}
}

@misc{EL,
 author = {Eur, Christopher and Larson, Matt},
 title = {K-theoretic positivity for matroids},
 year = {2023},
 howpublished = {Preprint, {arXiv}:2311.11996 [math.{AG}] (2023)},
 keywords = {14M99,05B35,52C35},
 url = {https://arxiv.org/abs/2311.11996},
 arXiv = {arXiv:2311.11996}
}

@article{EHL,
 author = {Eur, Christopher and Huh, June and Larson, Matt},
 title = {Stellahedral geometry of matroids},
 fjournal = {Forum of Mathematics, Pi},
 journal = {Forum Math. Pi},
 issn = {2050-5086},
 volume = {11},
 pages = {48},
 note = {Id/No e24},
 year = {2023},
 doi = {10.1017/fmp.2023.24},
 keywords = {05B35,52B40,14N20,52C35,14M25},
 zbMATH = {7772480},
 Zbl = {1528.05011}
}

@book{positivity,
  author    = {Robert Lazarsfeld},
  title     = {Positivity in Algebraic Geometry I},
  series    = {Ergebnisse der Mathematik und ihrer Grenzgebiete},
  volume    = {48},
  publisher = {Springer-Verlag},
  year      = {2004},
}

@book{Toric,
  author    = {David A. Cox and John B. Little and Hal Schenck},
  title     = {Toric Varieties},
  series    = {Graduate Studies in Mathematics},
  volume    = {124},
  publisher = {American Mathematical Society},
  year      = {2011},
}
\end{document}